\documentclass[pdfmx, a4paper, 11pt]{article}

\title{Moduli spaces of logarithmic connections associated with difference Painlev\'e equations of type $E^{(1)}_n$}
\author{Takafumi Matsumoto\footnote{National Institute of Technology, Kitakyushu College, 5-20-1 Shii, Kokuraminamiku, Kitakyushu, Fukuoka, 802-0985 Japan, Email address: tfmatumt@gmail.com}}
\date{}

\usepackage[top=30truemm, bottom=30truemm, left=30truemm, right=30truemm]{geometry}
\usepackage{amsmath,amsthm, mathrsfs}
\usepackage{amssymb}
\usepackage{latexsym}
\usepackage{tikz}
\usepackage{tikz-cd}
\usepackage{here}
\usepackage{bm}
\usepackage[all]{xy}

\theoremstyle{definition}
\newtheorem{definition}{Definition}[section]
\newtheorem{theorem}[definition]{Theorem}
\newtheorem{proposition}[definition]{Proposition}
\newtheorem{assumption}[definition]{Assumption}
\newtheorem{lemma}[definition]{Lemma}
\newtheorem{corollary}[definition]{Corollary}

\newtheorem{remark}[definition]{Remark}
              
\newcommand{\mbC}{{\mathbb C}}        
          
\newcommand{\mbL}{\mathbb{L}}   
\newcommand{\mbP}{\mathbb{P}}
\newcommand{\mbR}{\mathbb{R}}
\newcommand{\mbZ}{\mathbb{Z}}

\newcommand{\mcD}{\mathcal{D}}
\newcommand{\mcE}{\mathcal{E}}
\newcommand{\mcF}{\mathcal{F}}
\newcommand{\mcG}{\mathcal{G}}

\newcommand{\mcI}{\mathcal{I}}
\newcommand{\mcJ}{\mathcal{J}}

\newcommand{\mcM}{\mathcal{M}}
\newcommand{\mcN}{\mathcal{N}}
\newcommand{\mcO}{\mathcal{O}}

\newcommand{\mcS}{\mathcal{S}}

\newcommand{\mcX}{\mathcal{X}}
\newcommand{\mcY}{\mathcal{Y}}

\newcommand{\Aut}{\textrm{Aut}}
\newcommand{\Coker}{\textrm{Coker}\,}

\newcommand{\res}{\textrm{res}}
\newcommand{\End}{\textrm{End}}
\newcommand{\rank}{\textrm{rank}\,}
\newcommand{\App}{\textrm{App}}
\newcommand{\Image}{\textrm{Im}\,}
\newcommand{\Ker}{\textrm{Ker}\,}
\newcommand{\tr}{\textrm{tr}}
\newcommand{\id}{\textrm{id}}
\newcommand{\Hom}{\textrm{Hom}}
\newcommand{\length}{\textrm{length}\,}

\newcommand{\Spec}{\textrm{Spec}\,}
\newcommand{\Sym}{\textrm{Sym}\,}

\newcommand{\Pic}{\textrm{Pic}\,}

\newcommand{\pl}{\mathbb{P}^1}

\newcommand{\ord}{\textrm{ord}}

\newcommand{\tmid}{\textrm{mid}}
\newcommand{\mc}{\textrm{mc}}
\newcommand{\Gr}{\textrm{Gr}\,}

\newcommand{\GM}{\textrm{GM}}
\newcommand{\MDR}{\textrm{MDR}}
\newcommand{\DR}{\textrm{DR}}
\newcommand{\Supp}{\textrm{Supp}\,}

\newcommand{\vb}{\mathrel{} \middle | \mathrel{}}

\newcommand{\xj}[1]{\Omega^{#1}_{X}(\log J)}
\newcommand{\zd}{\Omega^1_{Z}(D_Z)}

\newcommand{\xjzq}[1]{\Omega^{#1}_{(X,J)/(Z, D_Z)}}

\newcommand{\rMDR}{\textrm{MDR}_{X/Z}}
\newcommand{\rDR}{\textrm{DR}_{X/Z}}

\allowdisplaybreaks[1]

\begin{document}
\maketitle
\begin{abstract}
In this paper, we provide an explicit correspondence between moduli spaces of parabolic logarithmic connections on the projective line with spectral type $(2^2,1^4,1^4)$ and $(3^2, 2^3, 1^6)$, and the spaces of initial conditions for difference Painlev\'e equations with symmetry of type $E^{(1)}_7$ and $E^{(1)}_8$, respectively. Furthermore, we realize the extended affine Weyl group symmetries of the space of initial conditions in terms of connections.
\end{abstract}
	
\section{Introduction}

\subsection{The space of initial conditions for difference Painlev\'e equations and the moduli space of meromorphic connections}

Sakai \cite{Sa1} provided a geometric approach to the discrete
Painlev\'e equations. He characterized the good compactification of spaces of initial conditions for the Painlev\'e equations as a certain rational projective surface. Each such surface is obtained by blowing up
the projective plane $\mathbb{P}^2$ at 9 points, including infinitely near ones, and has a unique effective anti-canonical divisor $Y$. The divisor $Y$ is a singular elliptic curve. Hence, such a surface is classified by affine root system $R$ corresponding to the type of $Y$. We call a surface corresponding to $R$ \textrm{an $R$-surface} and denote it by $X(R)$. The extended affine Weyl group $\widetilde{W}(R^\perp)$ acts on a suitable family of $R$-surfaces. In Sakai's theory, the discrete Painlev\'e equations are discrete dynamical systems generated by the action of nonzero elements in the root lattice. In particular, $R$-surfaces are the spaces of initial conditions for the difference Painlev\'e equations with symmetry of type $R^\perp$.

The (additive) difference Painlev\'e equations are classified into the following eleven symmetry types:
\[
A^{(1)}_0, A^{(1)}_1, A^{(1)}_2, D^{(1)}_4, D^{(1)}_5, D^{(1)}_6, D^{(1)}_7, D^{(1)}_8, E^{(1)}_6, E^{(1)}_7, E^{(1)}_8.
\]
It is known that the difference Painlev\'e equations are obtained by gauge transformations of linear differential equations preserving their monodromy. In the case where $R^\perp=A^{(1)}_n$, $D^{(1)}_n$, the difference Painlev\'e equations are obtained by linear differential equations giving rise to Painlev\'e equations by isomonodromic deformation. In the case where $R^\perp=E^{(1)}_n$, Boalch \cite{Bo2} found corresponding Fuchsian systems from the
perspective of quiver varieties:
\[
\begin{minipage}{16cm}
	\centering
	\begin{tabular}{|c|c|c|c|c|}\hline
		surface type $(R)$& $A^{(1)**}_0$&$A^{(1)*}_1$&$A^{(1)*}_2$ \\ \hline
		symmetry type $(R^\perp)$& $E^{(1)}_8$&$E^{(1)}_7$&$E^{(1)}_6$ \\\hline
		spectral type&$3^2, 2^3, 1^6$&$2^2, 1^4, 1^4$&$1^3, 1^3, 1^3$ \\ \hline
	\end{tabular}
\end{minipage}
\]
Since the Fuchsian systems in these cases have ranks greater than two, their analysis is nontrivial.

Our interest is moduli space of connections. Inaba, Iwasaki, and Saito \cite{IIS1, In, IS1, IS2} show that moduli spaces of meromorphic connections coincide with spaces of initial conditions of nonliner differential equations obtained by isomonodromic deformations. Hence, $R$-surfaces are isomorphic to moduli spaces of certain meremorphic connections for $R^\perp=A^{(1)}_n$, $D^{(1)}_n$. The same picture are expected in the case where $R^\perp=E^{(1)}_n$. 

Moduli spaces of linear differential equations with singularities are realized as quiver varieties (see Boalch \cite{Bo1}, Crawley-Boevey \cite{Cr}, Hiroe and Yamakawa \cite{HY}, and Hiroe \cite{Hi}). Systems of linear differential equations are regarded as connections on the trivial bundle. Moduli spaces of linear differential equations are proper subset of that of connections. Hence, we have to consider not only linear differential equations but also connections to get $R$-surfaces. The moduli space of Fuchsian systems corresponding to type $R^\perp=E^{(1)}_n$ is
identified with the Kronheimer's ALE spaces of type $E^{(1)}_n$. Boalch \cite{Bo2} explained
how to obtain the surfaces with symmetry of type $E^{(1)}_n$ from the corresponding ALE
spaces, that is, how to partially compactify the moduli space of logarithmic connections on
the trivial bundle to get the full moduli space of logarithmic connections of degree zero.

One of our aims is to give an explicit correspondence between moduli spaces of logarithmic connections with given spectral type and the spaces of initial conditions of difference Painlev\'e equations. The author \cite{Ma1} provided an explicit correspondence for $R^\perp=E^{(1)}_6$. In this paper, we do that for $R^\perp=E^{(1)}_7, E^{(1)}_8$.

We state the first main result. For a set of spectral types $\bm{r}=(r^{(i)}_j)_{1\leq j\leq s_i}^{0\leq i\leq 2}$ and complex numbers $\nu_0, \nu_1, \nu_2$ with $\nu_0+\nu_1+\nu_2=-1$, set
\[
\mcN=\left\{\boldsymbol{\nu}=(\nu^{(i)}_j)_{1\leq j\leq s_i}^{0\leq i\leq 2} \in \mbC^{s_0+s_1+s_2}\vb \text{$\sum_{j=1}^{s_i}r^{(i)}_j\nu^{(i)}_j=\nu_i$ for all $i$ }\right\}.
\] 
Let $\mcM\rightarrow \mcN$ be the family of moduli spaces of $\boldsymbol{\alpha}$-stable parabolic logarithmic connections with spectral type $\bm{r}$ on the projective line $\pl$, whose fiber $M^{\boldsymbol{\alpha}}(\boldsymbol{\nu})$ at $\boldsymbol{\nu}\in \mcN$ is the moduli space of $\boldsymbol{\alpha}$-stable $\boldsymbol{\nu}$-parabolic logarithmic connections. Let $\mcS$ be the blow-up of $\mbP^2_{(x_0:x_1:x_2)}\times \mcN$ along $s_0+s_1+s_2$ sections
\[
(1: -\nu^{(0)}_j:0), \quad (1: \nu^{(1)}_j:1), \quad (0: -\nu^{(2)}_j:1).
\]
Let $\mcD$ be the strict transforms of $\{x_0(x_0-x_2)x_2=0\}\times \mcN$. We denote the fibers of $\mcS$ and $\mcD$ at $\boldsymbol{\nu}$ by $S_{\boldsymbol{\nu}}$ and $D_{\boldsymbol{\nu}}$, respectively. For $\bm{r}=(2^2, 1^4, 1^4)$, $(3^2, 2^3, 1^6)$, $S_{\boldsymbol{\nu}}\setminus D_{\boldsymbol{\nu}}$ is isomorphic to the space $X(R)$ of initial conditions for the difference Painlev\'e equation with symmetry of type $R^\perp=E^{(1)}_7$, $E^{(1)}_8$, respectively, without the effective anti-canonical divisor $Y(R)$. We note that $S_{\boldsymbol{\nu}}$ is not isomorphic to $X(R)$ (Section \ref{E7sec}, \ref{E8sec}).

\begin{theorem}(Theorem \ref{MT1}, \ref{icsE7}, \ref{icsE8})\label{mt1} 
For $\bm{r}=(2^2, 1^4, 1^4)$, $(3^2, 2^3, 1^6)$ and a suitable $\boldsymbol{\alpha}$, there is an isomorphism $\tilde{\psi}\colon \mcM \rightarrow \mcS\setminus \mcD$ over $\mcN$. In particular, moduli spaces of parabolic logarithmic connections with spectral type $\bm{r}=(2^2, 1^4, 1^4)$, $(3^2, 2^3, 1^6)$ are isomorphic to spaces of initial conditions of difference Painlev\'e equations with symmetry of type $E^{(1)}_7, E^{(1)}_8$, respectively, without the effective anti-canonical divisor. 
\end{theorem}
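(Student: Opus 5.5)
The plan is to build $\tilde{\psi}$ from explicit coordinates on the moduli space, as in the author's treatment of $E^{(1)}_6$ \cite{Ma1}. The approach has four steps.

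\textbf{Step 1: normal forms for the underlying bundles.} For a suitable weight $\boldsymbol{\alpha}$, every $\boldsymbol{\alpha}$-stable $\boldsymbol{\nu}$-parabolic connection $(E,\nabla,\{l^{(i)}_*\})$ of spectral type $\bm{r}$ has degree $-1$, since $\sum\nu_i=-1$ is minus the degree by Fuchs' relation. I would first show that the underlying bundle is forced into a small number of splitting types:
\begin{itemize}
\item for $\bm{r}=(2^2,1^4,1^4)$, $E\cong \mcO^{3}\oplus\mcO(-1)$ generically;
\item for $\bm{r}=(3^2,2^3,1^6)$, $E\cong \mcO^5\oplus\mcO(-1)$ generically;
\item in both cases, a few degenerate types occur along curves.
\end{itemize}
These restrictions come from the stability inequality applied to the maximal destabilizing subbundles, together with the bound $a_i-a_j\le 1$ imposed by the logarithmic connection with three poles.

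\textbf{Step 2: the map $\psi$ to $\mbP^2$.} Next I define $\psi\colon M^{\boldsymbol{\alpha}}(\boldsymbol{\nu})\to\mbP^2$ using the composite
\[
\mcO(-1)\hookrightarrow E\xrightarrow{\nabla}E\otimes\Omega^1(D)\to (E/\mcO(-1))\otimes\Omega^1(D),
\]
or a dual "apparent singularity" construction. Its coefficients give a section of a rank-3 space, which yields a point $(x_0:x_1:x_2)$. Since $\dim H^0$ of the relevant sheaf is $3$, this is natural. The point is built so that the line $x_0=0$, $x_2=0$ or $x_0=x_2$ is hit exactly when the map degenerates at $t=0,1,\infty$ respectively.

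\textbf{Step 3: lifting to the blow-up.} I then check that wherever the parabolic data forces $\psi$ to land on one of the centers $(1:-\nu^{(0)}_j:0)$ and the others, the remaining parabolic data determines a point on the exceptional curve. This gives a lift $\tilde\psi$ to $S_{\boldsymbol\nu}$ that avoids $D_{\boldsymbol\nu}$. The eigenvalue at that pole is where $\nu^{(i)}_j$ enters.

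\textbf{Step 4: bijectivity and isomorphism.} To show $\tilde\psi$ is bijective, I construct an explicit inverse by writing connection matrices in normal form on $\mcO^{n-1}\oplus\mcO(-1)$ parametrized by $(x_0:x_1:x_2)$. I would do this chart by chart on $S_{\boldsymbol\nu}\setminus D_{\boldsymbol\nu}$, including the exceptional curves, and verify stability of the resulting connection. Both sides are smooth over $\mcN$:
\begin{itemize}
\item $\mcM$ by deformation theory, with $\dim=2$ and $\boldsymbol\alpha$ generic so that there are no strictly semistable objects;
\item $\mcS\setminus\mcD$ as an iterated blow-up along sections.
\end{itemize}
So a bijective morphism over $\mcN$ between smooth families, whose fibers are connected surfaces, is an isomorphism by Zariski's main theorem. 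The identification with $X(R)\setminus Y(R)$ then follows from Sakai's description, by contracting or blowing up curves inside $D_{\boldsymbol\nu}$, as in Sections \ref{E7sec}, \ref{E8sec}.

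The main obstacle is the normal form and stability analysis in the rank $6$ case $(3^2,2^3,1^6)$. Showing that the degenerate bundle types and the non-generic flags map precisely onto the exceptional curves, and nowhere into $D_{\boldsymbol\nu}$, needs a careful case split over the filtrations. I would first handle the generic locus and then treat each exceptional curve by a local computation near its center.
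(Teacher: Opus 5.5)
There is a genuine gap. It sits in Steps 1--2, which carry the content of the theorem.

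\textbf{Step 1.} The sign of the degree is off. The Fuchs relation $\sum_{i,j}r^{(i)}_j\nu^{(i)}_j+\deg E=0$ with $\nu_0+\nu_1+\nu_2=-1$ gives $\deg E=1$, so the bundle to expect is $\mcO(1)\oplus\mcO^{\oplus r-1}$. More importantly, for $0<\alpha^{(i)}_j\ll 1$ this is the splitting type of \emph{every} $\boldsymbol{\alpha}$-stable object (Propositions~\ref{bdlE7}, \ref{bdlE8}). There are no degenerate bundle types, so your plan of having degenerate splittings or special flags fill the exceptional curves misreads the geometry. The exceptional curve over $p^{(0)}_j$ consists of connections on the same bundle whose moving apparent singularity has collided with the pole $0$. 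The normal-form relation $q\mu_0=(p+\nu^{(0)}_1)(p+\nu^{(0)}_2)$ in (\ref{muE7}) forces $p=-\nu^{(0)}_j$ at $q=0$ and leaves the matrix entry $\mu_0$ free as the coordinate along that curve. The same happens with $\mu_1$ at $1$ and $\lambda_2$ at $\infty$.

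\textbf{Step 2.} You never attach a well-defined point of $\mbP^2$ to an isomorphism class.
With $E\cong\mcO^{\oplus 3}\oplus\mcO(-1)$, the subsheaf $\mcO(-1)$ is not canonical, and your composite is a section of $\mcO(2)^{\oplus 3}$, not an element of a $3$-dimensional space.
With the correct degree, the composite $\mcO(1)\to (E/\mcO(1))\otimes\Omega^1(D)\cong\mcO(1)^{\oplus r-1}$ is a constant nonzero vector in $\mbC^{r-1}$. Since $\Aut(E)\supset \mathrm{GL}_{r-1}$ moves it to any other nonzero vector, it carries no moduli; for $r=6$ it would not even be $3$-dimensional.

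The missing idea is the apparent singularity and its dual parameter.
\begin{enumerate}
\item Iterate $\nabla$ starting from the canonical $\sigma\colon\mcO(1)\to E$, which is unique up to scalar, to obtain the filtration of Assumption~\ref{appfil}. This needs stability to exclude $\nabla$-invariant $F^i$, and a residue argument at $0$ for $F^1$.
\item Use Proposition~\ref{mq}, the repeated eigenvalues at $0$ (and at $1$ for $E^{(1)}_8$), and $\deg T(\nabla,\sigma)=N$. Together these show the apparent divisor is $2[0]+[q]$, resp.\ $6[0]+3[1]+[q]$, leaving exactly one free point $q$.
\item The second coordinate $p$ comes from $\nabla-\nabla_0$ evaluated at $q$. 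This places $(q,p)$ in the total space $\mbL$ of $\Omega^1(D)\cong\mcO(1)$.
\item $\mbP^2$ is obtained from $\mbP(\mcO\oplus\Omega^1(D))$ by contracting the section at infinity. The lines $x_0=0$, $x_0=x_2$, $x_2=0$ are the images of the fibres over $0$, $1$, $\infty$; you have $1$ and $\infty$ interchanged.
\end{enumerate}

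Once this map exists, your Steps 3--4 are a reasonable way to finish: normal forms give an inverse chart by chart, and Zariski's main theorem is an alternative to writing the inverse globally. Your point that the normal-form connections must be checked for stability is one the paper passes over quickly.
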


Finding a good coordinate on the moduli space of connections is important to describe the
difference Painelev\'e equations explicitly. Dzhamay, Sakai and Takenawa \cite{DST, DT} introduced rational parameters of Fuchsian systems corresponding to the type $R^\perp=D^{(1)}_4$, $E^{(1)}_6$, $E^{(1)}_7$, which provide a good coordinate on a Zariski open subset of moduli space of Fuchsian systems. They regarded 
$R$-surfaces as the surfaces obtained by blowing up $\pl \times \pl$ at eight points and gave an explicit
correspondence between Fuchsian systems and points on a Zariski open subset of $\pl \times \pl$.

The morphism $\tilde{\psi}$ in Theorem \ref{mt1} is constructed by using the apparent singularity and its dual parameter (Section \ref{appsec}, \ref{dualparasec}). We write down $\boldsymbol{\alpha}$-stable parabolic logarithmic connections on $\pl$ with spectral type $\bm{r}=(2^2, 1^4, 1^4), (3^2, 2^3, 1^6)$ by these parameters. The normal forms give us the explicit correspondence between
$\boldsymbol{\alpha}$-stable parabolic logarithmic connections and points on the space of initial conditions (Section \ref{normalform}). In particular, the apparent singularity and its dual parameter provide a coordinate on the moduli space of connections. This method extends the argument for the rank two case \cite{DL, IIS2, KS, KLSS} and the case where $\bm{r}=(1^3, 1^3, 1^3)$ \cite{Ma1} to the cases where $\bm{r}=(2^2, 1^4, 1^4)$, $(3^2, 2^3, 1^6)$.
	
\subsection{Realization of symmetries of difference Painlev\'e equations}

In Sakai's theory, the extended affine Weyl group $\widetilde{W}(R^\perp)$ acts on a family of $X(R)$. Hence, $\widetilde{W}(R^\perp)$ also acts on the family of moduli spaces $M^{\boldsymbol{\alpha}}(\boldsymbol{\nu})$. This action should be written in terms of connections. In the case of Fuchsian systems, Boalch \cite{Bo2} explained that this action is obtained by additions and middle convolutions from the point of view of quiver varieties. On the other hand, it is nontrivial that the apparent singularity and its dual parameter of logarithmic connections are compatible with middle convolution. If it is true, then we can say that  the difference Painlev\'e equations are invariant under middle convolution by Theorem \ref{mt1}. We note that the commutativity of the isomonodromic deformation and middle convolution was established by Haraoka and Filipuk \cite{HF}, Boalch \cite{Bo3}, and Yamakawa \cite{Ya}. 

Our second aim is to realize symmetries of difference Painlev\'e equations in terms of connections. The author \cite{Ma2} did it in the case where $R^\perp=E^{(1)}_6$. In this paper, we realize symmetries of difference Painlev\'e equations for $R^\perp=E^{(1)}_7$, $E^{(1)}_8$. For the reflections at the non-central nodes and the automorphisms of the Dynkin diagram, their realization is straightforward. On the other hand, it is not clear how to realize the reflection at the central node.

We state the second main theorem. Let $w$ be the reflection at the central node of the Dynkin diagram, and let $\phi_{w} \colon \mcS \rightarrow \mcS$ denote the corresponding isomorphism. We will see in Section \ref{actS} that $\phi_{w}$ is induced by a quadratic transformation of the projective plane $\mbP^2$. Let $\tilde{\psi}\colon \mcM \rightarrow \mcS$ be the isomorphism in Theorem \ref{mt1}. We denote the middle convolution with a meromorphic one form $\beta$ by $\mc_\beta$ (which will be explained in Section \ref{mcsec}). Let $\mcN^0$ be the open subset of $\mcN$ defined in Section \ref{realizecn}, and set $\mcM^0=\mcM\times_{\mcN}\mcN^0$ and $\mcS^0=\mcS\times_{\mcN}\mcN^0$.

\begin{theorem}[Theorem \ref{MT2}]\label{mt2}
The following diagram commutes for a suitable $\beta$.
\[
\begin{tikzcd}
	\mcM^0\arrow[r, " \mc_\beta "]\arrow[d, "\tilde{\psi}"]&\mcM^0\arrow[d, "\tilde{\psi}"]\\
	\mcS^0\arrow[r, "\phi_{w}"]&\mcS^0
\end{tikzcd}
\]
\end{theorem}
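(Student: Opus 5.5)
Since $\tilde{\psi}$ is an isomorphism (Theorem \ref{mt1}), it suffices to check that $\tilde{\psi}\circ \mc_\beta$ and $\phi_w\circ\tilde{\psi}$ agree on a dense open subset of each fiber $M^{\boldsymbol{\alpha}}(\boldsymbol{\nu})$, $\boldsymbol{\nu}\in\mcN^0$. Both sides are morphisms into the separated scheme $\mcS^0$, so agreement on a dense open set gives agreement everywhere. I would choose $\beta$ by the usual Katz recipe: first add a rank-one logarithmic connection to shift one eigenvalue at each of the three points $0,1,\infty$ to zero; these are the residue eigenvalues $\nu^{(i)}_j$ of maximal multiplicity ($j=1$, multiplicities $2,2,2$ or $3,3,2$). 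Then take the middle convolution whose parameter is determined by $\nu_0+\nu_1+\nu_2=-1$. In the quiver picture this is the reflection at the central node. It sends spectral type $(2^2,1^4,1^4)$ to itself, since the rank $4$ becomes $2+2+2+\dots$, and similarly it preserves $(3^2,2^3,1^6)$ of rank $6$. It acts on $\boldsymbol{\nu}$ by the linear transformation $w$. This yields a map $\mcM^0\to\mcM^0$ over $w\colon\mcN^0\to\mcN^0$; the open set $\mcN^0$ is where the relevant eigenvalue differences are nonintegral, so that $\mc_\beta$ is invertible and preserves irreducibility and $\boldsymbol{\alpha}$-stability.

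The next step is to work on the open set where the underlying bundle is trivial (of the generic splitting type fixed in the normal form section). There the connection is a Fuchsian system $d-\sum_{i} A_i\,dx/(x-t_i)$, and I would use the normal forms of Section \ref{normalform}, written in terms of the apparent singularity $q$ and its dual parameter $p$. These correspond to the affine chart $(x_0:x_1:x_2)$ of $\mbP^2$ away from the lines $x_0x_2(x_0-x_2)=0$. Middle convolution on Fuchsian systems is explicit through the Dettweiler--Reiter construction: one forms the block matrices $G_i$ and quotients by $\mathcal K+\mathcal L$. I would compute the image system explicitly and then recompute its apparent singularity $q'$ and dual parameter $p'$ from the definitions in Sections \ref{appsec} and \ref{dualparasec}. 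A cleaner route may be to compute these via cyclic vectors. The apparent singularity is the zero of the determinant of the map $\mcO\to E\to E\otimes\Omega(D)\to\cdots$ built from a fixed subbundle. Under the Euler transform, a scalar higher-order equation transforms by a Riemann--Liouville integral, and the order and accessory parameters change in a controlled way. The target claim is that $(q',p')$ is obtained from $(q,p)$ by the quadratic Cremona map inducing $\phi_w$ (Section \ref{actS}), whose base points are the three blown-up points $(1:-\nu^{(0)}_1:0)$, $(1:\nu^{(1)}_1:1)$, $(0:-\nu^{(2)}_1:1)$.

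I expect the main obstacle to be this comparison of $(q,p)$ with $(q',p')$: relating the apparent singularity of $\mc_\beta(E)$ to data of $E$. I would first try an invariant description. The apparent singularity is where a distinguished rank-one subsheaf, given by eigenspaces at the three points, fails to generate a cyclic vector. I would then show that $\mc_\beta$ carries this description to the analogous one on the image, perhaps using the Fourier–Laplace or Euler integral pairing, so that a linear relation among $x_0,x_1,x_2$ and $p$ becomes the Cremona relation. If that fails, I would do the brute-force Dettweiler–Reiter computation on the explicit normal form with symbolic $q,p,\boldsymbol{\nu}$ and verify the formula. Finally, equality on this dense open set extends to all of $\mcM^0$ by the separatedness argument above, which gives the commutativity of the diagram.
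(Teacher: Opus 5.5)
Your overall architecture matches the paper's: verify on the dense open set $\App\neq\infty$, compute $\mc_\beta$ of the normal form, read off the new pair $(\bar q,\bar p)$, and compare with (\ref{quadtranf}). However, the step that carries the proof rests on a false premise.

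\textbf{No open set with trivial bundle.} There is no open subset of $\mcM$ on which the underlying bundle is trivial. Everything in $\mcM$ has degree $1$, and by Propositions \ref{bdlE7} and \ref{bdlE8} one always has $E\cong\mcO(1)\oplus\mcO^{\oplus r-1}$. Correspondingly, the normal forms of Propositions \ref{AE7} and \ref{AE8} are not Fuchsian systems $\sum_i A_i\,dz/(z-t_i)$. The first row of $A(z)$ has quadratic entries, so in the frame $e_1,\dots,e_r$ the form $A(z)\,dz/(z(z-1))$ has a nonzero constant part, i.e.\ a double pole at $\infty$. The Dettweiler--Reiter construction therefore does not apply. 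Moreover, the $\mc_\beta$ in the statement is Simpson's convolution for logarithmic connections on arbitrary bundles.

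The paper works with Simpson's construction directly:
\begin{itemize}
\item using $\mbR^1\pi_{2*}F=0$, it identifies $E'$ with $\pi_{2*}(F'\otimes\xjzq{1})/\pi_{2*}F$;
\item it writes explicit generators $\eta_1,\dots,\eta_9$ (resp.\ $\eta_1,\dots,\eta_{13}$) on $U_0$ and computes $\nabla_F$ on $e_1,\dots,e_r,ye_1$;
\item it finds that in the basis $[\eta_9],[\eta_6],[\eta_7],[\eta_8]$ (resp.\ $[\eta_{13}],[\eta_8],\dots,[\eta_{12}]$) the connection $\nabla'$ already has the shape of the normal form, with $1,z,\dots$ on the subdiagonal;
\item so $\bar q$ (the zero of the last subdiagonal entry) and $\bar p$ can be read off.
\end{itemize}
To use Dettweiler--Reiter instead, you would need an additional layer that you do not mention. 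You would have to perform an elementary transformation at $\infty$ to a degree-$0$ system and prove it is compatible with $\mc_\beta$, using non-resonance on $\mcN^0$ to control the lattices. You would also have to track the effect of that transformation on $(q,p)$.

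\textbf{The apparent singularity is misdescribed.} Your proposed invariant description is not the one that defines $\tilde{\psi}$. Here $\bar q$ is the moving part of $\App(E',\nabla',\sigma')$, defined from the unique-up-to-scalar $\sigma'\colon\mcO(1)\to E'$ and the filtration of Assumption \ref{appfil}; the fixed part is $2[0]$, resp.\ $6[0]+3[1]$. It is not defined from a map $\mcO\to E'$ or from eigenspaces at the poles. What must actually be found is $\Image\sigma'$ and the filtration $F^\bullet$ on $E'$, and this is exactly what the explicit basis provides.

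\textbf{Smaller points.}
\begin{itemize}
\item State $\beta$ precisely: $\beta^T=\gamma=\nu^{(0)}_1+\nu^{(1)}_1+\nu^{(2)}_1$ (the period of the central root), $\beta^{H_i}=-\nu^{(i)}_1$, and $\beta^{V_i}=\nu^{(i)}_1-\tfrac{s_i-1}{s_i}\gamma$. The convolution parameter is $\gamma$, not something fixed by $\nu_0+\nu_1+\nu_2=-1$.
\item In the chart $(q,p)\mapsto(q:p:1)$, the base points of the Cremona map are those of (\ref{pij}): $(0:-\nu^{(0)}_1:1)$, $(1:\nu^{(1)}_1:1)$, $(1:-\nu^{(2)}_1:0)$. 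This is the convention in which (\ref{quadtranf}) is written; the introduction's list has $x_0$ and $x_2$ swapped.
\end{itemize}

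As it stands, your proposal identifies the target identity but leaves undone the computation that constitutes the theorem.
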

This theorem means that the apparent singularity and its dual parameter are compatible with middle convolution. In this sense, these parameters give a good coordinate on the moduli space of parabolic logarithmic connections.

We note that the same assertions as in Theorem \ref{mt1}, \ref{mt2} hold when $R^\perp=E^{(1)}_6$ \cite{Ma1, Ma2}.

\subsection{Outline of this paper}

This paper is organized as follows. In Section 2, we recall the basic definitions and facts of parabolic connections. In section 3, we define the apparent singularity of logarithmic connections and we construct the morphism $\psi \colon \mcM \rightarrow \mbP^2$. In this process, we show that the underlying bundles are isomorphic to $\mcO(1)\oplus \mcO^{r-1}$ for a suitable $\boldsymbol{\alpha}$. In section 4, we prove Theorem \ref{mt1}. We give a normal form of logarithmic connections, and we show that the family $\mcM$ of moduli spaces is isomorphic to the family $\mcS \setminus \mcY$ of the surfaces without the effective anti-canonical divisors. After that, we provide an explicit correspondence between $\mcS$ and the family $\mcX$ of the $R$-surfaces. In Section 5, we explain how to define the action of $\widetilde{W}(R^\perp)$ on the family $\mcS\rightarrow \mcN$. After recalling the extended affine Weyl group $\widetilde{W}(R^\perp)$, we explain the action of $\widetilde{W}(R^\perp)$ on the Picard group of an $R$-surface. This action leads to the actions on  $\mcN$ and $\mcS$. We summarize the explicit form of the action in appendices. In Section 6, we realize the action on $\mcS$ as transformations of parabolic connections.We review the theory of middle convolution by Simpson \cite{Sim} and explain the main theorem \ref{mt2}. In appendices, we summarize entries of normal form, explicit forms of actions of $\widetilde{W}(R^\perp)$, and realization of the actions.

\section*{Acknowledgments}
The author is deeply grateful to Masa-Hiko Saito and Szilard Szab\'o for sharing their unpublished theory on the apparent singularities and for providing valuable comments on this paper. This work is supported by Japan Society for the Promotion of Science KAKENHI Grant Numbers 22H00094. 
		
\section{Parabolic logarithmic connections with given spectral type and moduli spaces}
In this section, we recall the moduli theory of parabolic connections 
with given spectral tupe due to Inaba-Saito \cite{IS2}. 

Take $n$ distinct points $t_1, \ldots, t_n \in \pl$. Set $D=[t_1]+\cdots+[t_n]$ and $L=\Omega_{\pl}^1(D)$. We denote the structure sheaf of $\pl$ by $\mcO$. For a point $q \in \pl$ and a sheaf $E$, we denote the stalk of $E$ at $q$ by $E_{q}$. 
\begin{definition}
A \textit{logarithmic connection} $(E,\nabla)$ of rank $r$ and degree $d$ over $(\pl, D)$ is a pair of a vector bundle $E$ of rank $r$ and degree $d$ on $\pl$ and a $\mathbb{C}$-homomorphism $\nabla \colon E\rightarrow E\otimes L$ satisfying Leibniz rule, i.e.
\[
\nabla(fa)=a\otimes df+f\nabla(a)
\]
for any $f\in \mcO$ and $a\in E$. 
\end{definition}
Let us fix an integer $d$ and a positive integer $r$. Take positive integers $r^{(i)}_{1}, \ldots, r^{(i)}_{s_i}$ with $\sum_{j=1}^{s_i}r^{(i)}_{j}=r$, and set $\bm{r}:=(r^{(i)}_{j})^{1\leq i\leq n}_{1\leq j\leq s_i}$. We denote the residue field at $t_i$ by $k(t_i)$ and set $E|_{t_i}:=E\otimes k(t_i)$. $E|_{t_i}$ is a vector space over $\mbC$ of dimension $r$. Let $\res_{t_i}\nabla$ be the endomorphism of $E|_{t_i}$ induced by $\nabla$, and we call $\res_{t_i} \nabla$ the \textit{residue endomorphism} at $t_i$.
\begin{definition}\label{pcdef}
Take a set of complex numbers $\boldsymbol{\nu}=(\nu_{j}^{(i)})^{1\leq i\leq n}_{1\leq j \leq s_i}$. A \textit{$\boldsymbol{\nu}$-parabolic logarithmic connection} $(E,\nabla, l_*=\{l_{*}^{(i)}\}_{1 \leq i\leq n})$ over $(\pl, D)$ with spectral type $\bm{r}$ is a collection consisting of the following date;
\begin{itemize}\setlength{\itemsep}{0cm}
\item[(1)] $(E, \nabla)$ is a logarithmic connection of rank $r$ and degree $d$ over $(\pl, D)$, 
\item[(2)] $l_{*}^{(i)}$ is a filtration $E|_{t_i}=l_{0}^{(i)} \supsetneq \cdots \supsetneq  l_{s_i-1}^{(i)} \supsetneq l_{s_i}^{(i)}=\{0\}$ by subspaces such that $\dim_{\mathbb{C}}l_{j}^{(i)}/l_{j+1}^{(i)}=r^{(i)}_{j+1}$ and 
\[
(\res_{t_i}(\nabla)-\nu_{j+1}^{(i)}\id )({l_j^{(i)}})\subset l_{j+1}^{(i)}
\]
for $1\leq i\leq n$ and $0\leq j\leq r-1$.	
\end{itemize}	
\end{definition}

The condition (2) of Definition \ref{pcdef} implies that $\nu^{(i)}_{1}, \ldots, \nu^{(i)}_{s_i}$ are the eigenvalues of the residue endomorphism $\res_{t_i}\nabla$, and $\res_{t_i}\nabla$ can be represented for a suitable basis by
\[
\res_{t_i}\nabla=
\begin{pmatrix}
	\nu^{(i)}_{s_i}I_{r^{(i)}_{s_i}}&*&*\\
	0&\ddots&*\\
	0&0&\nu^{(i)}_{1}I_{r^{(i)}_1}
\end{pmatrix}.
\]
We call $\nu^{(i)}_{j}$ a \textit{local exponent} of $\nabla$ at $t_i$.
If $\nu^{(i)}_{1}, \ldots, \nu^{(i)}_{s_i}$ are different, then $l^{(i)}_{j}$ is the direct sum of the eigenspaces of the eigenvalues $\nu^{(i)}_{j+1}, \ldots, \nu^{(i)}_{s_i}$.

Let $\boldsymbol{\alpha}=\{\alpha_j^{(i)}\}^{1 \leq i \leq n}_{1 \leq j \leq s_i}$ be a set of rational numbers satisfying $0 < \alpha_{1}^{(i)}<\cdots <\alpha_{s_i}^{(i)} < 1$ for each $i=1,\ldots,n$ and $\alpha_{j}^{(i)}\neq \alpha_{j'}^{(i')}$ for $(i,j)\neq (i',j')$. 

\begin{definition}
A $\boldsymbol{\nu}$-parabolic connection $(E,\nabla,l_*)$ with spectral type $\bm{r}$ is said to be \textit{$\boldsymbol{\alpha}$-stable} if for any nonzero subbundle $F \subsetneq E$ preserved by $\nabla$, the inequality 
\[
\frac{\deg F + \sum_{i=1}^{n} \sum_{j=1}^{s_i} \alpha_{j}^{(i)} \dim ((F|_{t_i} \cap l_{j-1}^{(i)})/(F|_{t_i} \cap l_{j}^{(i)}))}{\rank F}<\frac{\deg E + \sum_{i=1}^{n} \sum_{j=1}^{s_i} \alpha_{j}^{(i)}}{\rank E}
\]
holds.
\end{definition}

A $\boldsymbol{\nu}$-parabolic connection $(E,\nabla, l_*=\{l_*^{(i)}\}_{1 \leq i\leq n})$ satisfies the relation
\[
\sum_{i=1}^n\sum_{j=1}^{s_i}r^{(i)}_j\nu_{j}^{(i)}+\deg E=0,
\]
which called the Fuchs relation. Set
\[
\mcN(d, \bm{r}):=\left\{
(\nu^{(i)}_{j})^{1\leq i\leq n}_{1\leq j\leq s_i} \in \mbC^{\sum_{i=1}^ns_i} \vb \sum_{i=1}^n\sum_{j=1}^{s_i}r^{(i)}_{j}\nu^{(i)}_{j}+d=0
\right\}.
\]

\begin{theorem}\label{modpc}(Theorem 1.1, 1.2, 1.3 in \cite{IS2})
	There exists a relative coarse moduli scheme
	\[
	\colon \mcM^{\boldsymbol{\alpha}}(d, \bm{r})\longrightarrow \mcN(d, \bm{r})
	\]
	of $\boldsymbol{\alpha}$-stable parabolic connections of rank $r$ and degree $d$, which is smooth and quasi-projective, and whose fiber $M^{\boldsymbol{\alpha}}(\boldsymbol{\nu})$ over $\boldsymbol{\nu} \in \mcN(d, \bm{r})$ is the moduli space of $\boldsymbol{\alpha}$-stable $\boldsymbol{\nu}$-parabolic connections  over $(\pl, D)$. The moduli space $M^{\boldsymbol{\alpha}}(\boldsymbol{\nu})$ is of equidimension
	\[
	2\sum_{i=1}^n\sum_{j=1}^{s_i}\sum_{j'>j}r^{(i)}_{j}r^{(i)}_{j'}-2r^2+2.
	\]
	if $M^{\boldsymbol{\alpha}}(\boldsymbol{\nu})\neq \emptyset$.
\end{theorem}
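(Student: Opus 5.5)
The plan is to follow the Inaba--Saito strategy: realize parabolic connections as parabolic $\Lambda$-modules in Simpson's sense, construct the moduli by geometric invariant theory, and then compute the tangent and obstruction spaces with a hypercohomology complex.

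\textbf{Construction.} First prove boundedness. For an $\boldsymbol{\alpha}$-stable $\boldsymbol{\nu}$-parabolic connection of fixed rank $r$ and degree $d$, every $\nabla$-invariant subbundle has bounded parabolic slope. For a splitting $E=\bigoplus \mcO(a_k)$, the second fundamental form of $\nabla$ relative to the maximal destabilizing subsheaf is $\mcO$-linear with values in $\Omega^1_{\pl}(D)$. This bounds $a_{\max}-a_{\min}$ by roughly $(r-1)(n-2)$, so the underlying bundles form a bounded family. Twisting by a large $\mcO(m)$ makes all $E(m)$ globally generated with vanishing $H^1$. One then works on a Quot scheme $Q$ of quotients $\mcO^{N}\otimes \mcO(-m)\to E$. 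Over $Q$, the connections $\nabla$ (with prescribed residue eigenvalues) and the flags $l^{(i)}_*$ satisfying the residue condition of Definition \ref{pcdef} form a locally closed subscheme $R$ of a relative affine space times a product of flag bundles. Relative to $\mcN(d,\bm{r})$ this is again locally closed, since the residue conditions are polynomial in $\boldsymbol{\nu}$. The group $GL_N$ acts on $R$. Choose a linearization, weighted by the rational $\alpha^{(i)}_j$, so that GIT (semi)stability coincides with $\boldsymbol{\alpha}$-(semi)stability; this is the standard Simpson/Maruyama--Yokogawa comparison. Genericity of $\boldsymbol{\alpha}$ (the $\alpha^{(i)}_j$ pairwise distinct and suitably generic) forces semistable to equal stable. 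Stable objects are simple, i.e. their endomorphisms are scalars, so $PGL_N$ acts freely. Hence $R^{s}/PGL_N$ exists as a quasi-projective scheme over $\mcN(d,\bm{r})$ and is a relative coarse moduli scheme.

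\textbf{Smoothness and dimension.} Define the complex
\[
\mcF^0=\{u\in \mathcal{E}nd(E)\mid u|_{t_i}(l^{(i)}_j)\subset l^{(i)}_j\},\qquad \mcF^1=\{v\in \mathcal{E}nd(E)\otimes L \mid \res_{t_i}v\,(l^{(i)}_j)\subset l^{(i)}_{j+1}\},
\]
with differential $u\mapsto \nabla u-u\nabla$. The steps are as follows.
\begin{itemize}
\item Infinitesimal deformations with fixed $\boldsymbol{\nu}$ are given by $\mathbb{H}^1(\mcF^\bullet)$, and obstructions lie in $\mathbb{H}^2(\mcF^\bullet)$.
\item Grothendieck--Serre duality identifies $\mathbb{H}^2(\mcF^\bullet)$ with the dual of $\mathbb{H}^0$ of the dual complex. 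By the choice of $\mcF^1$, that dual complex is again of the same shape $\mcF^0\to\mcF^1$.
\item Its $\mathbb{H}^0$ consists of horizontal endomorphisms preserving the flags, hence equals $\mbC$ by stability.
\item The trace map $\mathbb{H}^2(\mcF^\bullet)\to \mathbb{H}^2(\mcO\to\Omega^1)\cong\mbC$ accounts for this one dimension. Taking the kernel of the trace (or fixing $\det$) leaves zero obstruction, so $M^{\boldsymbol{\alpha}}(\boldsymbol{\nu})$ is smooth. Relative smoothness over $\mcN$ follows because the trace part is exactly absorbed by varying $\boldsymbol{\nu}$ subject to the Fuchs relation.
\item The dimension is $\dim \mathbb{H}^1 = \dim\mathbb{H}^0+\dim\mathbb{H}^2-\chi(\mcF^0)+\chi(\mcF^1)=2-\chi(\mcF^0)+\chi(\mcF^1)$.
\item Riemann--Roch gives $\chi(\mcF^0)=r^2-\sum_i\sum_{j<j'}r^{(i)}_jr^{(i)}_{j'}$ (the quotient of $\mathcal{E}nd(E)$ has length equal to the codimension of the parabolic subalgebra). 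Similarly $\chi(\mcF^1)=r^2(n-2)+r^2-\sum_i\bigl(r^2-\sum_{j<j'}r^{(i)}_jr^{(i)}_{j'}\bigr)$.
\item Subtracting yields $2\sum_i\sum_{j<j'}r^{(i)}_jr^{(i)}_{j'}-2r^2+2$, independent of the point, which gives equidimensionality.
\end{itemize}

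\textbf{Main obstacle.} I expect the hardest step to be matching GIT stability on $R$ with parabolic $\boldsymbol{\alpha}$-stability, uniformly over the parameter space $\mcN(d,\bm{r})$. A related point needs checking: the residue-type condition must cut out a flat family so that the relative statement holds even when eigenvalues collide (where $l^{(i)}_*$ is extra data rather than determined by $\nabla$). For this I would first try Simpson's $\Lambda$-module machinery with $\Lambda$ the sheaf of log differential operators, adding parabolic structures as in Maruyama--Yokogawa. The flatness over the locus of colliding eigenvalues I would verify by the obstruction vanishing above applied to the relative complex.
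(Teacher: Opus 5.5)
The paper does not prove this statement itself; it imports it from Inaba--Saito \cite{IS2}. Your outline is essentially the argument of that reference (following \cite{IIS1, In}): a GIT construction on a Quot scheme with flag data, then the complex $\mcF^0\to\mcF^1$, with Serre duality giving $\mathbb{H}^2(\mcF^\bullet)\cong\mathbb{H}^0(\mcF^\bullet)^\vee\cong\mbC$, so the trace is an isomorphism and the obstruction, which is traceless because $\det$ always lifts, vanishes; Riemann--Roch then gives $2-\chi(\mcF^0)+\chi(\mcF^1)$, and your dimension count is correct. One inessential correction: genericity of $\boldsymbol{\alpha}$ does not in general force semistable $=$ stable, since a direct sum $F\oplus F$ with doubled flags is strictly $\boldsymbol{\alpha}$-semistable for every $\boldsymbol{\alpha}$ when $F$ is semistable, but you never need this, because $R^{s}/PGL_N$ is already an open subscheme of the quasi-projective GIT quotient.
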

We denote the spectral types $(22,1111,1111)$ and $(33, 222, 111111)$ by $(2^2, 1^4, 1^4)$ and  $(3^2, 2^3, 1^6)$, respectively. When $\bm{r}=(2^2, 1^4, 1^4), (3^2, 2^3, 1^6)$, the dimension of the moduli space $M^{\boldsymbol{\alpha}}(\boldsymbol{\nu})$ is two. For a set of spectral types $\bm{r}=(r^{(i)}_j)_{1\leq j\leq s_i}^{0\leq i\leq 2}$ and complex numbers $\nu_0, \nu_1, \nu_2$ with $\nu_0+\nu_1+\nu_2=-1$, let
\[
\mcN=\left\{\boldsymbol{\nu}=(\nu^{(i)}_j)_{1\leq j\leq s_i}^{0\leq i\leq 2} \in \mbC^{s_0+s_1+s_2}\vb \text{$\sum_{j=1}^{s_i}r^{(i)}_j\nu^{(i)}_j=\nu_i$ for all $i$ }\right\}
\] 
and $\mcM=\mcM^{\boldsymbol{\alpha}}(1, \bm{r})\times_{\mcN(1, \bm{r})}\mcN$. One of our aim is to show that $\mcM$ for $\bm{r}=(2^2, 1^4, 1^4), (3^2, 2^3, 1^6)$ is isomorphic to a family of the spaces of initial conditions for the difference Painlev\'e equations with symmetry of type $E^{(1)}_7,  E^{(1)}_8$, respectively.

\section{Construction of the morphism from the moduli spaces of parabolic connections to the projective plane}
In this section, we construct a morphism $\psi \colon \mcM\to \mbP^2$ for $D=[0]+[1]+[\infty]$ and $\bm{r}=(2^2, 1^4, 1^4), (3^2, 2^3, 1^6)$. Let $\pi \colon \mbL \to \pl$ be the total space of $L=\Omega_{\pl}^1(D)$, that is, $\mbL=\Spec \Sym(L^\vee)$. We define a map $\App \colon \mcM\rightarrow \pl$ using the theory of apparent singularities, and we lift $\App$ to $\varphi \colon \mcM\rightarrow \mbL$. We obtain a morphism $\psi$ by composing $\varphi$ with a morphism $\mbL \rightarrow \mbP^2$. 

\subsection{Apparent singularities} \label{appsec}
The theory of apparent singularities for logarithmic connections are defined by Saito and Szab\'o \cite{SS}. In this paper, we adopt a slightly modified version of their theory, which is essentially the same as theirs. In this section, we do not assume that $D=[0]+[1]+[\infty]$ and $\bm{r}=(2^2, 1^4, 1^4), (3^2, 2^3, 1^6)$.

We take a logarithmic connection $(E, \nabla)$ over $(\pl, D)$ of rank $r$ and degree 1. By Riemann-Roch theorem, the Euler characteristic  $\chi\left(\mcO(1)^\vee \otimes E\right)$ is one, which implies $\dim \Hom(\mcO(1), E)\geq 1$. Hence, there is a nonzero homomorphism $\sigma \colon \mcO(1) \rightarrow E$. Let us fix a $\sigma$. 

\begin{assumption}\label{appfil}
	For a collection $(E, \nabla, \sigma)$, there exists a unique filtration 
	\[
	E\supsetneq F^0\supsetneq F^1\supsetneq F^2\supsetneq \cdots \supsetneq F^{r-1}\supsetneq F^r=\{0\}
	\]
	by locally free sheaves satisfying the following properties.
	\begin{itemize}
		\item[(1)] $F^{r-1}=\Image(\sigma)$.
		\item[(2)] $\rank F^j=r-j$.
		\item[(3)] $\nabla(F^j)\subset F^{j-1}\otimes L$ for $1\leq j\leq r-1$ and the induced $\mcO$-homomorphism 
		\[
		\Gr^j(\nabla)\colon F^{j}/F^{j+1}\longrightarrow (F^{j-1}/F^{j})\otimes L
		\]
		is an isomorphism.
		\item[(4)] $F^{r-j-1}/F^{r-j}\cong L^{-j}\otimes \mcO(1)$ for $1\leq j\leq r-1$.
	\end{itemize}
\end{assumption}

Under Assumption \ref{appfil}, we define a torsion sheaf $T(\nabla, \sigma)$ by the following exact sequence
\[
0\longrightarrow  F^0\overset{\nabla}{\longrightarrow} E \longrightarrow T(\nabla, \sigma) \longrightarrow 0.
\]
Since
\[
\deg F^j=\sum_{k=0}^{r-j-1}\deg \left(L^{-k}\otimes \mcO(1)\right)=-\frac{1}{2}(r-j)(r-j-1)(n-2)+(r-j), 
\]
we have
\[
\deg T(\nabla, \sigma)=1-\left(-\frac{1}{2}r(r-1)(n-2)+r\right)=\frac{1}{2}r(r-1)n+1-r^2=:N.
\]
We note that $N$ is the half of the dimension of the moduli space $M^{\boldsymbol{\alpha}}(\boldsymbol{\nu})$ for $\bm{r}=(1^r, \ldots, 1^r)$. 
\begin{definition}
We call the effective divisor
\[
\App(E, \nabla, \sigma):=\sum_{q\in \pl}m_q[q]\quad (m_{q}:=\length_{\mcO_{q}} T(\nabla, \sigma)_{q})
\]
the \textit{apparent singularity divisor} of $(E, \nabla, \sigma)$. 
We call a point $q$ an \textit{apparent singularity} of $(E, \nabla, \sigma)$ if $m_{q}>0$. 
\end{definition}

We note that Saito and Szab\'o \cite{SS} showed that a collection $(E, \nabla, \sigma)$ satisfy Assumption \ref{appfil} if $(E, \nabla)$ is irreducible. In particular, they defined the apparent singularities for pairs of irreducible connections $(E, \nabla)$ and sections $\sigma$. On the other hand, we have to define the apparent singularities for all $\boldsymbol{\alpha}$-stable parabolic connections to describe the whole of moduli spaces. In the next section, we show that all $\boldsymbol{\alpha}$-stable parabolic connections satisfy Assumption \ref{appfil} for $\bm{r}=(2^2, 1^4, 1^4), (3^2, 2^3, 1^6)$ and a suitable $\boldsymbol{\alpha}$. The remainder of this section is devoted to proving a lemma and a proposition used in the next section.

\begin{lemma}\label{saturation}
	Let $(E, \nabla)$ be a connection. For subsheaves $F_1\subset F_2\subset E$, we denote their saturation by $E_1, E_2$. If $\nabla(F_1)\subset F_2\otimes L$, then $\nabla(E_1)\subset E_2\otimes L$. 
\end{lemma}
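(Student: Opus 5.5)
The plan is to use the characterization of the saturation as the sheaf of local sections of $E$ that become sections of $F_i$ after multiplication by a nonzero function. Concretely, $E_i$ is the kernel of $E \to E/F_i \to (E/F_i)/\mathrm{tors}$. Equivalently, a local section $a$ of $E$ lies in $E_i$ if and only if $fa \in F_i$ for some nonzero local function $f$. On a curve this can be arranged with $f$ a power of a local parameter at the relevant point, or, generically, with $E_i$ and $F_i$ agreeing on a dense open set.

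Take a local section $a$ of $E_1$ near a point $q$. Choose a nonzero $f$ with $fa \in F_1$. By the Leibniz rule,
\[
f\nabla(a) = \nabla(fa) - a\otimes df .
\]
Here $\nabla(fa) \in F_2\otimes L$ by hypothesis. Since $a \in E_1 \subset E_2$ and $df = (df/\omega)\,\omega$ for a local generator $\omega$ of $L$, we get $a\otimes df \in E_2 \otimes L$. Note that $df$ is a regular section of $\Omega^1 \subset L$. Hence $f\nabla(a) \in E_2\otimes L$.

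It remains to show that $E_2\otimes L$ is saturated in $E\otimes L$. Since $L$ is a line bundle, $(E\otimes L)/(E_2\otimes L) \cong (E/E_2)\otimes L$, and this is torsion free because $E/E_2$ is. So $f\nabla(a)\in E_2\otimes L$ with $f\neq 0$ forces $\nabla(a)\in E_2\otimes L$, which gives $\nabla(E_1)\subset E_2\otimes L$.

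The only mildly delicate point is that $\nabla$ is merely $\mathbb{C}$-linear, so the inclusion cannot be checked on an open set alone. The Leibniz computation above handles this, so I expect no real obstacle. Alternatively, one can argue as follows: the composite $E_1 \to E\otimes L \to (E/E_2)\otimes L$ is $\mcO$-linear, since the $a\otimes df$ term dies as $a\in E_2$. It vanishes on the dense open set where $F_1=E_1$, and its target is torsion free, so it is zero.
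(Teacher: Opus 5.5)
Your proposal is correct and takes essentially the paper's approach. Your closing alternative is exactly the paper's proof: the composite $E_1\to (E/E_2)\otimes L$ is $\mcO$-linear because $E_1\subset E_2$, it kills $F_1$, so it factors through the torsion sheaf $E_1/F_1$ and vanishes because the target is torsion free. Your main Leibniz argument, multiplying $a$ by $f$ with $fa\in F_1$, is the section-by-section version of that same reasoning.
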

\begin{proof}
	Since $F_1\subset F_2$, we have $E_1\subset E_2$. In particular, the composition
	\[
	u\colon E_1\overset{\nabla}{\longrightarrow} E\otimes L\longrightarrow E/E_2\otimes L
	\]
	is an $\mcO$-homomorphism. Since $u(F_1)=\{0\}$ by definition, $u$ induces an $\mcO$-homomorphism $\bar{u}\colon E_1/F_1\to E/E_2\otimes L$. Since rank of $E_1/F_1$ is zero and $E/E_2$ is locally free, $\bar{u}$ must be zero. Hence, we have $\nabla(E_1)\subset E_2\otimes L$. 
\end{proof}
We consider a collection $(E, \nabla, \sigma)$ satisfying Assumption \ref{appfil}. Let $E^j$ be the saturation of $F^j$. For $j=0, \ldots, r-2$, we define an $\mcO$-homomorphism $u^{(j)}$ by
\begin{equation}\label{uj}
u^{(j)} \colon E^{j+1} \overset{\nabla}{\longrightarrow} E^{j} \otimes L\longrightarrow E^{j}/E^{j+1} \otimes L.
\end{equation}
We denote the cokernel $\Coker u^{(j)}$ by $T^{(j)}$. Since $u^{(j)}$ is not zero by Lemma \ref{saturation} and Assumption \ref{appfil} (3), the rank of $T^{(j)}$ is zero. Let $T^{(r-1)}:=E^{r-1}/F^{r-1}$. 
\begin{proposition}\label{mq}
For $(E,\nabla, \sigma)$ and $q\in \pl$, we have
\[
m_{q}=\sum_{j=1}^{r} j\cdot \length_{\mcO_{q}}T^{(j-1)}_q.
\]
In particular, if $\length_{\mcO_{q}}T^j_q>0$ for some $j$, then $q$ is an apparent singularity.
\end{proposition}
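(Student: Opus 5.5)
The plan is to compute the length of $T(\nabla,\sigma)_q$ by comparing the two filtrations $F^\bullet$ and $E^\bullet$ one graded piece at a time, and then to sum up. Since $\deg T(\nabla,\sigma)=\deg E-\deg F^0$, I read the defining sequence as identifying $T(\nabla,\sigma)$ with the torsion sheaf $E/F^0$, up to the identification of $\mcO$-modules implicit there. So $m_q=\length_{\mcO_q}(E/F^0)_q$. If the intended map is genuinely not the inclusion, this identification would need re-checking, but only $\length_{\mcO_q}$ of the cokernel enters below.

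Everything is local at $q$. I work over the discrete valuation ring $\mcO_q$ with uniformizer $z$, trivialize $L_q$, and replace every degree by a local valuation.
\begin{itemize}
\item The graded pieces $F^j/F^{j+1}$ and $E^j/E^{j+1}$ are locally free of rank one. For the $F^\bullet$ pieces this is Assumption~\ref{appfil}(4); for the $E^\bullet$ pieces it holds because each $E^j$ is saturated.
\item Since $F^j\subset E^j$ with equal ranks, we get inclusions $F^j/F^{j+1}\to E^j/E^{j+1}$ at the generic point. Let $c_j\in\mbZ$ be the order at $q$ of this generic identification of rank-one free $\mcO_q$-modules.
\item Put $a_j:=\length_{\mcO_q}(E^j/F^j)_q$ and $a_r:=0$. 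Additivity of length along the filtrations gives $a_j-a_{j+1}=c_j$. Since $E^0=E$, we have $m_q=a_0=\sum_{j=0}^{r-1}c_j$.
\end{itemize}

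Next I identify the $c_j$ with the torsion lengths. By Lemma~\ref{saturation}, $\nabla(E^{j+2})\subset E^{j+1}\otimes L$, so $u^{(j)}$ kills $E^{j+2}$. It therefore factors through an $\mcO$-linear map $\bar u^{(j)}\colon E^{j+1}/E^{j+2}\to (E^j/E^{j+1})\otimes L$ between line bundles. This map is nonzero, so it is injective, and $T^{(j)}$ is its cokernel.

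Restricted to the generic point, $\bar u^{(j)}$ agrees with $\Gr^{j+1}(\nabla)$, which is an isomorphism of the $F$-graded pieces by Assumption~\ref{appfil}(3). Comparing the two lattices on each side then yields
\[
\length_{\mcO_q}T^{(j)}_q=c_j-c_{j+1}\qquad(0\le j\le r-2).
\]
For the last piece, $T^{(r-1)}=E^{r-1}/F^{r-1}$ gives $\length_{\mcO_q} T^{(r-1)}_q=a_{r-1}=c_{r-1}$.

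Writing $\ell_j:=\length_{\mcO_q}T^{(j)}_q$ and inverting the telescoping relations gives $c_j=\sum_{k\ge j}\ell_k$. Hence
\[
m_q=\sum_{j=0}^{r-1}\sum_{k\ge j}\ell_k=\sum_{k=0}^{r-1}(k+1)\ell_k,
\]
which is the claimed formula. All $\ell_k\ge 0$, so positivity of any $\ell_k$ forces $m_q>0$, which is the second assertion.

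The main obstacle is the identity $\length_{\mcO_q} T^{(j)}_q=c_j-c_{j+1}$. This requires care with directions: it is the $F$-level map that is an isomorphism, and the saturation shifts its source by $c_{j+1}$ and its target by $c_j$. I would verify this by choosing local generators $f_j$ of $F^j/F^{j+1}$ and $e_j$ of $E^j/E^{j+1}$ with $f_j=z^{c_j}e_j$. Then $\bar u^{(j)}(e_{j+1})=z^{-c_{j+1}}\,\Gr^{j+1}(\nabla)(f_{j+1})=z^{-c_{j+1}}\cdot(\text{unit})\,z^{c_j}e_j$. This also shows $c_j\ge c_{j+1}$, which is consistent with $\ell_j\ge0$.
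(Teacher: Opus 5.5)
Your proof is correct and is essentially the paper's argument in coordinate-free form. Your $c_{j-1}$ is the $z$-order of the diagonal entry $b_j\cdots b_r$ of the paper's triangular change-of-basis matrix $B$, and additivity of length along the two flags replaces $m_q=\ord_z(\det B)$. Your commutative square involving $\Gr^{j+1}(\nabla)$ is the paper's inductive step $f_j-(b_j\cdots b_r)e_j\in E^j$, which gives $\ord_z(b_j)=\length_{\mcO_q}T^{(j-1)}_q$. Your reading $T(\nabla,\sigma)=E/F^0$ is also what the paper uses implicitly.
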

\begin{proof}
	Let $z \in \mcO_q$ be a local parameter. We can take sections $f_1, \ldots, f_{r}$ of $E$ by Assumption \ref{appfil} (2) such that $f_j, \ldots, f_{r}$ generate $F^{j-1}$ and
	\begin{equation}\label{fcond}
		\nabla(f_{j+1})-f_{j}\otimes \omega\in F^{j}\otimes L
	\end{equation}
	for $1\leq j\leq r-1$. Here, $\omega=\frac{dz}{z}$ if $q=t_1, \ldots, t_n$, and $\omega=dz$ otherwise. We take a basis $e_1, \ldots, e_{r}$ of $E$ such that $e_{j}, \ldots e_{r}$ generate $E^{j-1}$ for $1\leq j\leq r$.  Then, there is a $b_{r}\in \mcO_q$ such that $f_{r}=b_{r}e_{r}$. Since $\nabla(E^{j})\subset E^{j-1}\otimes L$ for $1\leq j\leq r-1$, there is an element $b_{j} \in \mcO_q$ such that
	\begin{equation}\label{econd}
		\nabla(e_{j+1})-b_{j}e_{j}\otimes \omega\in E^{j}\otimes L.
	\end{equation}
	Since
	\begin{align*}
		\nabla(f_{r}-b_{r}e_{r})=&\left(\nabla(f_{r})-f_{r-1}\otimes \omega\right)-b_{r}\left(\nabla(e_{r})-b_{r-1}e_{r-1}\otimes \omega\right)-e_{r}\otimes db_{r}\\
		&+f_{r-1}\otimes \omega-b_{r-1}b_{r}e_{r-1}\otimes \omega,
	\end{align*}
	we have $f_{r-1}-(b_{r-1}b_{r})e_{r-1} \in E^{r-1}$. By repeating this calculation, we have $f_{j}-(b_{j}\cdots b_{r})e_{j}\in E^{j}$. Hence, we obtain $(f_{r}, \cdots, f_1)=
	(e_{r}, \cdots, e_1)B$, 
	\[B=
	\begin{pmatrix}
		b_{r}&*&\cdots&*\\
		0&\ddots &\ddots&\vdots\\
		\vdots&\ddots&\hspace{1mm}\ddots&*\\
		0&\cdots&0&\hspace{-1mm}b_1\cdots b_{r}
	\end{pmatrix} \in M_r(\mcO_q)
	\]
	We denote the order $f \in \mcO_q$ of $z$ by $\ord_z(f)$. Since
	\[
	\ord_z(b_j)=\length_{\mcO_{q}}T^{(j-1)}_q,
	\]
	we have
	\[
	m_q=\ord_z(\det B)=\sum_{j=1}^{r}j\cdot \ord_z(b_j)=\sum_{j=1}^{r}j\cdot\length_{\mcO_{q}}T^{(j-1)}_q.
	\]
\end{proof}
\subsection{Underlying bundle}
In this section, we show that the underlying bundles of $\boldsymbol{\alpha}$-stable parabolic connections on $(\pl, D=[0]+[1]+[\infty])$ are isomorphic to $\mcO(1)\oplus \mcO^{\oplus r-1}$ for  $\bm{r}=(2^2, 1^4, 1^4), (3^2, 2^3, 1^6)$ and a suitable $\boldsymbol{\alpha}$.

First, we show that $\boldsymbol{\alpha}$-stable parabolic connections satisfying Assumption \ref{appfil} for $\bm{r}=(2^2, 1^4, 1^4)$.

\begin{lemma}
	Assume that $0<\alpha^{(i)}_j \ll 1$ for all $i, j$. Let $(E, \nabla, l_*)$ be an $\boldsymbol{\alpha}$-stable $\boldsymbol{\nu}$-parabolic connection with spectral type $(2^2, 1^4, 1^4)$ and degree $1$. We take a nonzero homomorphism $\sigma \colon \mcO(1)\to E$. Then, $(E, \nabla, \sigma)$ satisfies Assumption \ref{appfil}. 
\end{lemma}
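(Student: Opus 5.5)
The plan is to construct the filtration downward from $F^3=\Image(\sigma)$, one step at a time, by applying $\nabla$. Uniqueness will be automatic because each step is forced. Throughout, $n=3$, $L=\Omega^1_{\pl}(D)\cong\mcO(1)$, $r=4$ and $\deg E=1$. Condition (4) then asks for
\[
F^3\cong\mcO(1),\quad F^2/F^3\cong\mcO,\quad F^1/F^2\cong\mcO(-1),\quad F^0/F^1\cong\mcO(-2).
\]

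\textbf{Step 1 (the inductive construction).} Since $\sigma\neq 0$ and $\mcO(1)$ is a line bundle, $\sigma$ is injective, so $F^3:=\Image(\sigma)\cong\mcO(1)$. Suppose $F^j$ has been built with $\nabla(F^{j+1})\subset F^j\otimes L$. Then the composite $F^j\overset{\nabla}{\to}E\otimes L\to (E/F^j)\otimes L$ is $\mcO$-linear and kills $F^{j+1}$, so it induces $v_j\colon F^j/F^{j+1}\to (E/F^j)\otimes L$.

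Define $F^{j-1}$ to be the preimage in $E$ of $\Image(v_j)\otimes L^{-1}$, i.e. the subsheaf generated by $F^j$ and $\nabla(F^j)\otimes L^{-1}$. This is torsion free on a curve, hence locally free. If $v_j\neq 0$, then $v_j$ is injective (its source is a line bundle), so $\Gr^j(\nabla)$ is an isomorphism onto $F^{j-1}/F^j$, and
\[
F^{j-1}/F^j\cong (F^j/F^{j+1})\otimes L^{-1}.
\]
This gives (1)--(4) by induction. Uniqueness follows because (3) forces any admissible $F^{j-1}$ to contain $F^j+\nabla(F^j)\otimes L^{-1}$ with the same rank and degree, so the two coincide. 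So everything reduces to showing $v_3,v_2,v_1\neq 0$.

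\textbf{Step 2 (nonvanishing via stability).} If $v_j=0$, then $\nabla(F^j)\subset F^j\otimes L$. By Lemma \ref{saturation}, the saturation $E^j$ is then a $\nabla$-invariant subbundle of rank $4-j$ with $\deg E^j\geq \deg F^j$. For $0<\alpha^{(i)}_j\ll1$, $\boldsymbol{\alpha}$-stability gives $\deg F<\rank F/4+\varepsilon$ for every proper invariant subbundle $F$, so $\deg F\leq 0$.

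The degrees are $\deg F^3=1$, $\deg F^2=1$ and $\deg F^1=0$. This immediately rules out $v_3=0$ and $v_2=0$. The case $v_1=0$ survives this test: we only get $E^1=F^1$, a $\nabla$-invariant subbundle of rank $3$ and degree $0$.

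\textbf{Step 3 (the main obstacle: excluding $v_1=0$).} Here the degree inequality alone is not enough, and I would use the parabolic contribution and the spectral type at the point of type $2^2$. First, compare both sides of the stability inequality to first order in $\boldsymbol{\alpha}$, choosing the $\alpha^{(i)}_j$ (still small) so that the weights picked up by $F^1|_{t_i}\cap l^{(i)}_*$ violate it. Alternatively, use the structure of $\nabla|_{F^1}$: it has $F^3\cong\mcO(1)$ as a generator with $\Gr$-isomorphisms, so $F^1$ is locally cyclic.

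One then examines $\res_{t}\nabla$ on $F^1|_{t}$ at the point with spectral type $2^2$, where the residue is semisimple with two $2$-dimensional eigenspaces, so its minimal polynomial has degree at most $2$. A cyclic structure on the $3$-dimensional fiber would force degree $3$. The delicate part is that $F^j\subset F^1$ need not be subbundles at $t$, so the cyclicity must be read off from the local matrix (as in the proof of Proposition \ref{mq}) rather than on fibers directly.

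I would first try the weight count: fix $\boldsymbol{\alpha}$ with $\sum\alpha$ tuned so that every degree-$0$ rank-$3$ invariant subbundle is destabilizing. If that fails, I would fall back on the residue and cyclicity argument.
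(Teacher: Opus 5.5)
\textbf{Verdict: the proposal has a genuine gap in Step 3, which is the heart of the lemma; Steps 1 and 2 are correct and match the paper.} The paper also builds $F^2,F^1,F^0$ downward from $F^3=\Image(\sigma)$ and excludes invariance of $F^3,F^2$ by the same slope comparison. Your uniqueness remark is a small addition the paper omits.

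\textbf{Your first option cannot work.} For a rank-$3$, degree-$0$ invariant subbundle, the stability inequality reads $O(\boldsymbol{\alpha})<\tfrac14+O(\boldsymbol{\alpha})$. This holds for every sufficiently small $\boldsymbol{\alpha}$, so no choice of small weights makes $F^1$ destabilizing. (The lemma has to hold for all small $\boldsymbol{\alpha}$ anyway.) The contradiction must come from the residue at $t_0=0$.

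\textbf{Your second option is the right idea, and the ``delicate part'' is already handled by your own Step 2.} If $v_1=0$, you showed $E^1=F^1$, so $F^1$ is a subbundle of $E$. Since $F^1/F^2$ and $F^2/F^3$ are line bundles, $F^3\subset F^2\subset F^1$ are subbundles of $E$ as well. The argument then runs as follows.
\begin{itemize}
\item $F^1|_0$ is a $3$-dimensional $\res_0\nabla$-invariant subspace of $E|_0$.
\item Take a basis adapted to the flag $F^3|_0\subset F^2|_0\subset F^1|_0$. In it, $\res_0\nabla$ has vanishing $(3,1)$ entry. Its subdiagonal entries are the fibers at $0$ of the isomorphisms $\Gr^3(\nabla)$ and $\Gr^2(\nabla)$, composed with the residue identification $L|_0\cong\mbC$, so they are nonzero.
\item Hence the restriction is regular, with minimal polynomial of degree $3$.
\item The parabolic condition at $0$ gives $(\res_0\nabla-\nu^{(0)}_2)(\res_0\nabla-\nu^{(0)}_1)=0$ on $E|_0$. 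So the restriction has minimal polynomial of degree at most $2$, a contradiction.
\end{itemize}
Use this parabolic relation rather than semisimplicity: semisimplicity fails when $\nu^{(0)}_1=\nu^{(0)}_2$, a case allowed in $\mcN$.

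\textbf{Do not try to read cyclicity off the local matrix without $E^1=F^1$.} A purely local residue computation then gives no contradiction: when $F^1$ is not saturated at $0$, a subdiagonal entry of the residue on $E^1|_0$ can vanish.

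\textbf{How this compares with the paper.} The paper treats exactly that non-saturated situation, arguing in the reverse order. It writes $\res_0\nabla|_{E^1}$ in a basis adapted to the saturations, with subdiagonal entries $a,b$. The parabolic structure forces $ab=0$, hence $F^1\neq E^1$, hence $\deg E^1\geq 1$, and $E^1$ destabilizes. Your repaired version (stability first to get saturation, then the residue contradiction) is equivalent and slightly cleaner. As written, though, your proposal leads with a route that fails and leaves the correct route unfinished, because it misses that Step 2 closes the gap it worries about.
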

\begin{proof}
	Let $F^3:=\Image(\sigma)$. We inductively define a homomorphism $v^{(i)}$ and a subseaf $F^i$ as follows. Let $v^{(i)}$ be the composition
	\[
	F^i\overset{\nabla}{\longrightarrow}E\otimes L \longrightarrow E/F^{i} \otimes L
	\]
	and $F^{i}$ be the kernel of the composition of the quotients
	\[
	E\longrightarrow E/F^{i+1} \longrightarrow (E/F^{i+1} )/\left(\Image(v^{(i+1)})\otimes L^{-1}\right).
	\]
	We can see that $F^i$ satisfies Assumption \ref{appfil} if $v^{(i+1)}\neq 0$. Hence, It is sufficient for all $i$ to show that $v^{(i)}\neq 0$, that is, $F^i$ is not invariant under $\nabla$. If $\nabla(F^i)\subset F^i\otimes L$ for $i=2, 3$, then $F^i$ destabilizes $(E, \nabla, l_*)$ since
	\[
	\frac{\deg F^i}{\rank F^i}-\frac{\deg E}{\rank E}\geq  \frac{1}{4}> \frac{\sum_{i=2}^{0} \sum_{j=1}^{s_i} \alpha_{j}^{(i)}}{\rank E}-\frac{ \sum_{i=0}^{2} \sum_{j=1}^{s_i} \alpha_{j}^{(i)} \dim ((F|_{t_i} \cap l_{j-1}^{(i)})/(F|_{t_i} \cap l_{j}^{(i)}))}{\rank F}.
	\]
	Hence, we have $\nabla(F^i)\nsubseteq F^i\otimes L$ for $i=2, 3$.
	
	Suppose that $F^1$ is invariant under $\nabla$. Let $E^i$ be the saturation of $F^i$. Then, $E^1$ is also invariant under $\nabla$ by Lemma \ref{saturation}. We show that $E^1$ destabilizes $(E, \nabla, l_*)$. We take a basis $e_1, e_2, e_3$ of $E^1|_{0}$ such that $e_j, \ldots, e_3$ generate $E^j|_{0}$ for all $j$. Then, $\res_{0}\nabla|_{E^1}$ is of the form
	\[
	(\res_{0}\nabla|_{E^1})(e_3, e_2, e_1)=(e_3, e_2, e_1)A, \quad A=
	\begin{pmatrix}
		*&*&*\\a&*&*\\0&b&*	
	\end{pmatrix}.
	\]
	When $\nu^{(0)}_1 \neq \nu^{(0)}_2$, since both of the eignespaces of $\res_{0}\nabla$ for $\nu^{(0)}_1$ and $\nu^{(0)}_2$ are two dimentional, either eigenspace is included in $E^1|_0$. When $\nu^{(0)}_1=\nu^{(0)}_2$, we can see that $E^1|_0$ contains a two-dimensional subspace of the eigenspace for $\nu^{(0)}_1$ by the definition of $l^{(0)}_*$. In either case, all $2\times 2$ minors of $A$ vanish. Hence, at least one of $a$ and $b$ is zero, and we have $F^1 \nsubseteq E^1$. From this, we have $\deg E^1\geq \deg F^1+1=1$, and
	\[
	\frac{\deg E^1}{\rank E^1}-\frac{\deg E}{\rank E}\geq \frac{1}{12}> \frac{\sum_{i=1}^{3} \sum_{j=1}^{s_i} \alpha_{j}^{(i)}}{\rank E}-\frac{ \sum_{i=1}^{3} \sum_{j=1}^{s_i} \alpha_{j}^{(i)} \dim ((F|_{t_i} \cap l_{j-1}^{(i)})/(F|_{t_i} \cap l_{j}^{(i)}))}{\rank F}.
	\]
	Hence, $E^1$ destabilizes $(E, \nabla, l_*)$. Therefore, $F^1$ is not invariant under $\nabla$.
\end{proof}

\begin{proposition}\label{bdlE7}
Assume that $0<\alpha^{(i)}_j \ll 1$ for all $i, j$. Let $(E, \nabla, l_*)$ be an $\boldsymbol{\alpha}$-stable $\boldsymbol{\nu}$-parabolic connection with spectral type $(2^2, 1^4, 1^4)$ and degree $1$. Then, $E$ is isomorphic to $\mcO(1)\oplus \mcO^{\oplus 3}$.
\end{proposition}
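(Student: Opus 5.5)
The plan is to combine the Grothendieck splitting $E\cong\bigoplus_{k=1}^{4}\mcO(a_k)$, $a_1\geq a_2\geq a_3\geq a_4$, $\sum a_k=1$, with two constraints. The first comes from $\nabla$ itself: consecutive distinct degrees cannot be too far apart. The second comes from $\boldsymbol{\alpha}$-stability with $0<\alpha^{(i)}_j\ll 1$ together with the local structure of $\res_0\nabla$ of type $2^2$. Throughout, $L=\Omega^1_{\pl}([0]+[1]+[\infty])\cong\mcO(1)$.

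\emph{Step 1 (gap bound).} Suppose $a_k>a_{k+1}$, and let $E_{\geq}$ be the subbundle spanned by the summands of degree $\geq a_k$; it is canonical, as a step of the Harder--Narasimhan filtration. Since all $\alpha^{(i)}_j$ are small, any $\nabla$-invariant subbundle $F$ with $\deg F/\rank F-1/4$ bounded below by a positive constant, such as $1/12$, destabilizes $(E,\nabla,l_*)$, exactly as in the preceding lemma. One checks that $E_{\geq}$ always has slope $>1/4$ when $a_k>a_{k+1}$, so $E_{\geq}$ is not invariant. Hence the $\mcO$-linear map
\[
E_{\geq}\longrightarrow (E/E_{\geq})\otimes L
\]
induced by $\nabla$ is nonzero. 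Its source has summands of degree $\geq a_k$ and its target has summands of degree $\leq a_{k+1}+1$. Therefore $a_k\leq a_{k+1}+1$, so every gap is at most one.

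\emph{Step 2 (enumeration).} With $\sum a_k=1$, four entries and gaps $\leq 1$, only two types survive: $(1,0,0,0)$ and $(1,1,0,-1)$. For instance, $(2,1,-1,-1)$ and $(1,1,1,-2)$ have gaps $2$ and $3$. I expect this enumeration to be routine. It remains to exclude $E\cong\mcO(1)^2\oplus\mcO\oplus\mcO(-1)$.

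\emph{Step 3 (excluding $(1,1,0,-1)$; the main obstacle).} Set $E_1=\mcO(1)^2$. The map $E_1\to(E/E_1)\otimes L\cong\mcO(1)\oplus\mcO$ has zero component into $\mcO$, so it has rank $\leq 1$. It is nonzero by Step 1, so its kernel saturates to a line subbundle $K\cong\mcO(1)$ with $\nabla K\subset E_1\otimes L$. Next let $E'=\mcO(1)^2\oplus\mcO$, which has slope $2/3$. If $E'$ were $\nabla$-invariant it would destabilize, so the induced maps along the flag
\[
K\subset E_1\subset E'\subset E
\]
are all nonzero. Explicitly these are $K\to (E_1/K)\otimes L\cong\mcO(2)$, which vanishes at exactly one point, $E_1/K\to (E'/E_1)\otimes L\cong\mcO(1)$, and $E'/E_1\to (E/E')\otimes L\cong\mcO$, the last two being isomorphisms. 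Thus $(E,\nabla)$ has a flag of the kind in Assumption \ref{appfil}.

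Now I argue as in the preceding lemma. In a basis of $E|_0$ adapted to this flag, $\res_0\nabla$ is upper Hessenberg, with subdiagonal entries equal to the values at $0$ of these three maps. Because the spectral type at $0$ is $2^2$, the residue satisfies $(\res_0\nabla-\nu^{(0)}_1)(\res_0\nabla-\nu^{(0)}_2)=0$ when the two exponents differ. When they coincide, the parabolic flag still gives a $2$-dimensional eigenspace inside each $3$-dimensional flag piece. In either case a suitable entry of this quadratic product is the product of two consecutive subdiagonal entries. Since the last two maps are isomorphisms, the only way for this to vanish is that $K\to(E_1/K)\otimes L$ vanishes at $0$. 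I then intend to redo the argument with $E_1$ replaced by its other natural subflag, or equivalently to use the remaining minors of the quadratic relation, to force a second vanishing. Then either the $\mcO(2)$-valued map has two zeros, which is impossible for a nonzero section of degree $1$, or the saturation of $K+\nabla$-image becomes invariant of slope $>1/4$, which contradicts stability as before. This bookkeeping of which minors vanish is where I expect the real work. If it fails, my fallback is a direct computation of $\dim\Hom(\mcO(1),E)=2$ against the apparent-singularity count $N=3$ from Proposition \ref{mq}, using that the filtration of Assumption \ref{appfil} forces $F^3\cong\mcO(1)$ and $F^2/F^3\cong\mcO$.

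Once $(1,1,0,-1)$ is excluded, Step 2 leaves $E\cong\mcO(1)\oplus\mcO^{\oplus3}$.
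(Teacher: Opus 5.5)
Your Steps 1 and 2 are sound, but Step 3 has a gap as written. The entry you examine cannot give a contradiction, and the endgame you sketch (a second zero of the $\mcO(2)$-valued map, or an invariant saturation) is not what closes it.

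The missing observation is already inside your setup. Let $a,b,c$ be the subdiagonal entries of $\res_0\nabla$ in your adapted basis, i.e.\ the values at $0$ of $K\to(E_1/K)\otimes L$, $E_1/K\to(E'/E_1)\otimes L$ and $E'/E_1\to(E/E')\otimes L$. Set $P=(\res_0\nabla-\nu^{(0)}_1)(\res_0\nabla-\nu^{(0)}_2)$. You used $P_{31}=ab$, which only yields $a=0$. But $P_{42}=bc$, and $b\neq 0\neq c$ because the last two maps are isomorphisms of line bundles. Hence $P\neq 0$, and $(1,1,0,-1)$ is excluded at once.

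You do not even need $K$. Any $v\in E_1|_0$ with $\res_0\nabla(v)\notin E_1|_0$ gives linearly independent $v$, $\res_0\nabla(v)$, $(\res_0\nabla)^2(v)$, so $\res_0\nabla$ cannot satisfy a quadratic relation. Note also that $P=0$ holds whether or not $\nu^{(0)}_1=\nu^{(0)}_2$, directly from the parabolic condition: $(\res_0\nabla-\nu^{(0)}_2)(\res_0\nabla-\nu^{(0)}_1)$ maps $E|_0$ into $l^{(0)}_2=0$. Your claim about a two-dimensional eigenspace inside each three-dimensional flag piece is therefore unnecessary, and it is not automatic anyway.

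With this repair your proof is complete, and it follows a genuinely different route from the paper. The paper never looks at the splitting type.
\begin{itemize}
\item It takes the filtration of Assumption \ref{appfil} attached to an arbitrary $\sigma\colon\mcO(1)\to E$, which exists by the preceding lemma.
\item It applies the same residue relation at $0$ to the subdiagonal entries $u^{(2)}|_0,u^{(1)}|_0,u^{(0)}|_0$ of the saturated flag. This shows that either the middle entry vanishes or both outer ones do.
\item It rules out the second case because Proposition \ref{mq} would force $m_0\ge 3+1>N=3$.
\item It deduces $T^{(1)}=k(0)$, $T^{(0)}=k(q)$ and $T^{(2)}=T^{(3)}=0$.
\item It rebuilds $E$ from $E^3\cong\mcO(1)$ by three successive extensions with quotient $\mcO$, all of which split.
\end{itemize}

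Your argument is more elementary and self-contained: it uses Grothendieck splitting, slope bounds for small weights, and one residue relation, with no apparent-singularity theory and no need for the preceding lemma. The paper's count, in contrast, produces simultaneously the apparent divisor $2[0]+[q]$ and the lengths of the $T^{(j)}$. These are exactly what Corollary \ref{appE7} and the normal form in Proposition \ref{AE7} rely on. The paper's approach also carries over uniformly to type $(3^2,2^3,1^6)$, where your gap-bound enumeration would leave several more splitting types to exclude.
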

\begin{proof}
We take a nonzero homomorphism $\sigma \colon \mcO(1) \rightarrow E$. Let $\{F^i\}$ be the filtration of subseaves of $E$ satisfying Assumption \ref{appfil}. Let $E^i$ be the saturation of $F^i$. We take a basis $e_1, e_2, e_3$ of $E^1|_{0}$ such that $e_j, \ldots, e_3$ generate $E^j|_{0}$ for all $j$. Then, $\res_{0}\nabla$ is of the form
\[
(\res_{0}\nabla)(e_3, e_2, e_1, e_0)=(e_3, e_2, e_1, e_0)
\begin{pmatrix}
	*&*&*&*\\a&*&*&*\\0&b&*&*\\
	0&0&c&*
\end{pmatrix}.
\]
When $\nu^{(0)}_1 \neq \nu^{(0)}_2$, since both of the eigenspaces for $\nu^{(0)}_1$ and $\nu^{(0)}_2$ are two-dimensional, we have $b=0$ or $a=c=0$. When $\nu^{(0)}_1 =\nu^{(0)}_2$, we also have $b=0$ or $a=c=0$ by the definition of $l^{(0)}_*$. Suppose that $a=c=0$. Then, we have $m_0\geq 4$ by Proposition \ref{mq}, which contradicts to $m_0\leq N=3$. Hence, $b$ must be zero. By Proposition \ref{mq}, $\App(E, \nabla, \sigma)$ is of the form $2[0]+[q]$. Then, we have $T^{(2)}=T^{(3)}=0$, $T^{(1)}=k(0)$ and $T^{(0)}=k(q)$, where $T^{(i)}$ is a torsion sheaf defined in the previous section. Hence, we have $E^3=F^3\cong \mcO(1)$ and $E^2=F^2\cong \mcO(1)\oplus \mcO$. By the definition of $T^{(1)}$,  the short exact sequence
\[
0\longrightarrow E^2/E^3\otimes L^{-1}\longrightarrow E^1/E^2 \longrightarrow T^1\otimes L^{-1} \longrightarrow 0.
\]
From this, we have $E^1/E^2 \cong \mcO$. Hence, we have $E^1\cong \mcO(1)\oplus \mcO^{\oplus 2}$. In a similar manner, we have $E\cong \mcO(1)\oplus \mcO^{\oplus 3}$. 
\end{proof}

We have the following from Proposition \ref{bdlE7}.
\begin{corollary}\label{appE7}
Under the same setting, the apparent singularities of $(E, \nabla, l_*)$ are uniquely determined, and the apparent singularity divisor is of the form $2[0]+[q]$. 
\end{corollary}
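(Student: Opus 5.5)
The corollary has two parts: independence of the apparent singularity divisor from the choice of $\sigma$, and the shape $2[0]+[q]$. The plan is to get the first from Proposition \ref{bdlE7} and the second from the proof of that proposition.

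\textbf{Step 1: independence of $\sigma$.} By Proposition \ref{bdlE7}, $E\cong \mcO(1)\oplus\mcO^{\oplus 3}$. Hence
\[
\Hom(\mcO(1),E)\cong H^0(\mcO)\oplus H^0(\mcO(-1))^{\oplus 3}\cong \mbC .
\]
So any nonzero $\sigma\colon \mcO(1)\to E$ is unique up to a nonzero scalar, and its image $F^3=\Image(\sigma)$ is the distinguished subbundle $\mcO(1)\subset E$.

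The filtration $\{F^j\}$ of Assumption \ref{appfil} exists by the preceding lemma. It is uniquely determined by $F^3$ and $\nabla$, via the inductive construction in that lemma. Since $F^3$ does not depend on the scaling of $\sigma$, neither do $F^0$ nor the torsion sheaf $T(\nabla,\sigma)=E/\nabla(F^0)$. Therefore $\App(E,\nabla,\sigma)$ depends only on $(E,\nabla,l_*)$.

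\textbf{Step 2: the form of the divisor.} This is read off from the proof of Proposition \ref{bdlE7}. There the structure of $\res_0\nabla$ forces $b=0$, i.e. $\length T^{(1)}_0\ge 1$. The alternative $a=c=0$ was already excluded, because it would give $m_0\ge 4>N=3$.

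By Proposition \ref{mq}, this contributes at least $2$ to $m_0$. The total degree is $N=\tfrac12\cdot 4\cdot 3\cdot 3+1-16=3$, which leaves exactly one further point of multiplicity $1$. This remaining multiplicity must come from $T^{(0)}$, because any other $T^{(j)}$ would contribute with weight $\ge 2$. So the divisor is $2[0]+[q]$, where $q$ may coincide with $0$.

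\textbf{Main subtlety.} The only point needing care is that uniqueness of $F^j$ is genuinely intrinsic and not tied to an auxiliary choice. The construction in the lemma (saturation kernel of $v^{(i+1)}$) is canonical once $F^3$ is fixed, so I expect no real obstacle.
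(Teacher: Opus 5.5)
Your proposal is correct and follows essentially the paper's own argument, which only says the corollary follows from Proposition \ref{bdlE7}. Since $E\cong\mcO(1)\oplus\mcO^{\oplus 3}$ gives $\Hom(\mcO(1),E)\cong\mbC$, the section $\sigma$ is unique up to scalar, so the filtration and $T(\nabla,\sigma)$ do not depend on the choice. The shape $2[0]+[q]$ is exactly what was already derived in the proof of Proposition \ref{bdlE7} from Proposition \ref{mq} and $N=3$.
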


We have the same description for the $E^{(1)}_8$-case.
\begin{proposition}\label{bdlE8}
	Assume that $0<\alpha^{(i)}_j \ll 1$ for all $i, j$. Let $(E, \nabla, l_*)$ be an $\boldsymbol{\alpha}$-stable parabolic connection with spectral type $(3^2, 2^3, 1^6)$ and degree $1$. Then, $E$ is isomorphic to $\mcO(1)\oplus \mcO^{\oplus 5}$. 
\end{proposition}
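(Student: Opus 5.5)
The plan follows the proof of Proposition \ref{bdlE7}. Here $r=6$ and $n=3$, so $L\cong \mcO(1)$ and
\[
N=\tfrac12\cdot 6\cdot 5\cdot 3+1-36=10 .
\]

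\textbf{Step 1: Assumption \ref{appfil} holds.} Take a nonzero $\sigma\colon \mcO(1)\to E$ and build $F^5=\Image(\sigma)\subset F^4\subset\cdots\subset F^0$ inductively, as in the $(2^2,1^4,1^4)$ lemma. I must show that no $F^i$ is $\nabla$-invariant. For small $i$ the slope of $F^i$ exceeds $1/6$, so invariance contradicts $\boldsymbol{\alpha}$-stability when $0<\alpha^{(i)}_j\ll1$.

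For the remaining $F^i$, I would argue as for $F^1$ in the $E^{(1)}_7$ lemma. If $F^i$ were invariant, its saturation $E^i$ would be invariant by Lemma \ref{saturation}. The residue constraints at $0$ and $1$ would force some subdiagonal entry of $\res\nabla|_{E^i}$ to vanish. That makes $E^i\neq F^i$, so $\deg E^i\ge \deg F^i+1$, and this degree gain destabilizes. I expect this case analysis to be routine but tedious.

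\textbf{Step 2: residue constraints at $0$ and $1$.} Choose a basis $e_5,\dots,e_0$ of $E|_{t}$ adapted to the saturations $E^j$. In this basis $\res_t\nabla$ is upper Hessenberg with subdiagonal entries $b_1,\dots,b_5$, and by Proposition \ref{mq} a vanishing $b_j$ contributes with weight $j$ to $m_t$.

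Setting some $b_j=0$ splits the matrix into unreduced Hessenberg blocks. In an unreduced block every eigenvalue has geometric multiplicity one, and the minimal polynomial forces each block to have size at most $s_i$.

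At $0$, with type $3^2$, each eigenvalue must have a $3$-dimensional eigenspace, or a $3$-dimensional subspace of the generalized eigenspace compatible with $l^{(0)}_*$ when the eigenvalues coincide. So there are at least three blocks, each of size at most $2$. The cheapest choice is $b_2=b_4=0$, giving $m_0\ge 6$. Every other admissible choice has weight at least $9$, for example $b_1=b_3=b_5=0$.

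At $1$, with type $2^3$, there are at least two blocks of size at most $3$. The cheapest choice is $b_3=0$, giving $m_1\ge 3$.

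\textbf{Step 3: the apparent singularity divisor and the torsion sheaves.} Since $\deg\App=N=10$, the bounds $m_0\ge 6$ and $m_1\ge3$ leave only weight $1$. Any alternative pattern gives at least $9+3>10$. Hence
\[
\App(E,\nabla,\sigma)=6[0]+3[1]+[q],
\]
and the torsion sheaves are forced to be
\[
T^{(5)}=T^{(4)}=0,\quad T^{(3)}=k(0),\quad T^{(2)}=k(1),\quad T^{(1)}=k(0),\quad T^{(0)}=k(q).
\]
I regard this combinatorial bookkeeping as the main obstacle. It requires checking carefully that the equal-eigenvalue cases ($\nu^{(i)}_j$ coinciding) still yield the same vanishing pattern via the flags $l^{(i)}_*$, and that the correspondence between vanishing subdiagonal entries and the weights $j$ in Proposition \ref{mq} is as claimed.

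\textbf{Step 4: the bundle.} From $T^{(5)}=0$ we get $E^5=F^5\cong\mcO(1)$. The exact sequences
\[
0\to E^{j+1}/E^{j+2}\otimes L^{-1}\to E^{j}/E^{j+1}\to T^{(j)}\otimes L^{-1}\to 0
\]
then give the successive quotients. First $E^4/E^5\cong\mcO(1)\otimes L^{-1}=\mcO$. At each further step the twist by $L^{-1}=\mcO(-1)$ lowers the degree by one and the length-one torsion raises it back. Thus $E^j/E^{j+1}\cong\mcO$ for $j=3,2,1,0$, and $E^0=E$ since $\rank F^0=6$.

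Every extension of $\mcO$ by $\mcO(1)\oplus\mcO^{k}$ splits, because $H^1(\pl,\mcO(1)\oplus\mcO^k)=0$. Hence inductively $E\cong\mcO(1)\oplus\mcO^{\oplus5}$.
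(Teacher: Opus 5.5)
Your overall route is the paper's: it proves this exactly as Proposition~\ref{bdlE7} and records the same torsion data. Steps 2--4 are essentially right. Step 1, however, has a genuine gap. With $n=3$ one has $\deg F^j=\sum_{k=0}^{5-j}(1-k)$, so
$\deg F^5=\deg F^4=1$, $\deg F^3=0$, $\deg F^2=-2$, $\deg F^1=-5$.
The slope argument therefore only covers $i=4,5$, not ``small $i$''. For $0<\alpha^{(i)}_j\ll 1$, an invariant $E^i$ of rank $<6$ destabilizes if and only if $\deg E^i\ge 1$. A degree gain of one suffices for $E^3$. For $E^2$ you need a gain of $3$, and for $E^1$ a gain of $6$. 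So the claim ``$\deg E^i\ge \deg F^i+1$, and this degree gain destabilizes'' is false for $i=1,2$. The $F^1$-argument from the $E^{(1)}_7$ lemma does not transfer.

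The fix is to run your Step 2 bookkeeping on the partial filtration.
\begin{itemize}
\item The exact sequences of your Step 4 give $\deg E^i-\deg F^i=\sum_{k\ge i}(k-i+1)\length T^{(k)}$, with $T^{(5)}=E^5/F^5$.
\item If $E^i$ is invariant, then $\res_t(\nabla|_{E^i})$ is Hessenberg with subdiagonal entries coming from $u^{(4)},\dots,u^{(i)}$. Its minimal polynomial divides $\prod_j(\lambda-\nu^{(t)}_j)$.
\item For $E^2$: at $0$, either $u^{(3)}$ vanishes (gain $2$) or $u^{(4)},u^{(2)}$ both do (gain $4$). At $1$, one of $u^{(4)},u^{(3)},u^{(2)}$ vanishes (gain $\ge 1$). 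Hence $\deg E^2\ge 1$.
\item For $E^1$: at $0$, at least one of the pairs $\{u^{(3)},u^{(1)}\}$, $\{u^{(3)},u^{(2)}\}$, $\{u^{(4)},u^{(2)}\}$ must vanish (gain $\ge 4$). At $1$, $u^{(2)}$, $u^{(3)}$ or $\{u^{(4)},u^{(1)}\}$ must vanish (gain $\ge 2$). Hence $\deg E^1\ge 1$.
\end{itemize}
Both bounds are sharp, so you genuinely need both points and the weights.

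Also, what must be excluded is $\nabla(F^i)\subset E^i\otimes L$ (non-injectivity of $v^{(i)}$), not merely invariance of $F^i$. Lemma~\ref{saturation} reduces this to invariance of $E^i$, as you do.

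Two small slips in Steps 2--3:
\begin{itemize}
\item At $0$, the pattern $b_4=b_3=b_1=0$ (blocks of sizes $2,1,2,1$) is admissible with weight $8$, not $\ge 9$.
\item You should also dismiss the alternatives at $1$. Their minimum weight is $5$, e.g.\ $b_4=b_1=0$.
\end{itemize}
Since $8+3>10$ and $6+5>10$, your conclusion stands. Your worry about coinciding eigenvalues is already settled by your block-size bound: $\prod_j(\res_{t_i}\nabla-\nu^{(i)}_j)=0$ follows from the flag conditions in every case.
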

In the proof of the above proposition, we have $T^{(0)}=k(q)$, $T^{(1)}=T^{(3)}=k(0)$, $T^{(2)}=k(1)$, and $T^{(4)}=T^{(5)}=0$. In particular, we have the following.
\begin{corollary}\label{appE8}
	Under the same setting, the apparent singularities of $(E, \nabla, l_*)$ are uniquely determined, and the apparent singularity divisor is of the form $6[0]+3[1]+[q]$. 
\end{corollary}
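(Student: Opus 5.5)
The idea is to read off the apparent singularity divisor from the local shape of the residue matrices in a basis adapted to the filtration, exactly as in the proof of Proposition \ref{bdlE7}, and then use the global degree count $\deg T(\nabla,\sigma)=N$. Here $r=6$ and $n=3$, so $N=\frac12\cdot 6\cdot 5\cdot 3+1-36=10$.

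\emph{Uniqueness.} By Proposition \ref{bdlE8}, $E\cong\mcO(1)\oplus\mcO^{\oplus 5}$, so $\Hom(\mcO(1),E)\cong\mbC$. Hence $\sigma$ is unique up to a nonzero scalar. Rescaling $\sigma$ does not change $\Image(\sigma)$, so it does not change the filtration $F^\bullet$ of Assumption \ref{appfil} (which exists for such connections, as for the $E^{(1)}_7$ case). Consequently $T(\nabla,\sigma)$ and $\App(E,\nabla,\sigma)$ depend only on $(E,\nabla,l_*)$.

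\emph{Local contributions.} By Proposition \ref{mq}, $m_q=\sum_{j=1}^6 j\cdot\length T^{(j-1)}_q$. At $q=0$ I choose a basis $e_1,\dots,e_6$ of $E|_0$ adapted to the saturations $E^j$. Then $\res_0\nabla$ is upper Hessenberg, and its subdiagonal entries are the values at $0$ of the $b_j$. A vanishing subdiagonal entry in the $j$-th slot forces $T^{(j)}_0\neq0$, which contributes at least $j+1$ to $m_0$.

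At $0$ the spectral type is $3^2$, so both eigenspaces are $3$-dimensional. When $\nu^{(0)}_1=\nu^{(0)}_2$, the flag $l^{(0)}_*$ plays the same role, as in the $E^{(1)}_7$ case. An unreduced Hessenberg block has geometric multiplicity at most one for each eigenvalue. Hence the zeros of the subdiagonal split the matrix into at least three diagonal blocks, each containing both eigenvalues. There are two possibilities:
\begin{itemize}
\item Exactly two zeros. The blocks then have sizes $(2,2,2)$, the zeros sit in slots $2$ and $4$, and $T^{(1)}_0,T^{(3)}_0\neq0$. This gives $m_0\ge 2+4=6$.
\item More zeros. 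Any other admissible pattern has strictly larger weight; for instance $\{1,3,5\}$ gives $12$.
\end{itemize}

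At $q=1$ the spectral type is $2^3$: three eigenvalues, each with a $2$-dimensional eigenspace. The same argument gives at least two blocks, each containing all three eigenvalues. With the fewest zeros the blocks are $(3,3)$, so $T^{(2)}_1\neq0$ and $m_1\ge 3$.

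\emph{Global count.} Since $\sum_q m_q=10$ and the minimal configurations already give $6+3=9$, every non-minimal pattern (for example weight $\ge 8$ at $0$ together with $\ge 3$ at $1$) exceeds $10$ and is excluded. This forces $T^{(1)}=T^{(3)}=k(0)$ and $T^{(2)}=k(1)$, in agreement with the remark after Proposition \ref{bdlE8}. The remaining degree is $1$. A contribution $j\cdot\length T^{(j-1)}_q$ can equal $1$ only for $j=1$, so the remainder must be $T^{(0)}=k(q)$ for a single point $q$, and $T^{(4)}=T^{(5)}=0$. Therefore $\App=6[0]+3[1]+[q]$.

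\emph{Main obstacle.} The delicate step is the block-structure claim for the Hessenberg residue, namely that the number and positions of subdiagonal zeros are forced. This needs care when the $\nu^{(i)}_j$ coincide, because then the eigenspace dimensions must be replaced by the flag conditions of Definition \ref{pcdef}. I would first handle generic distinct exponents, where the argument is pure linear algebra. For the degenerate case I would use the flags $l^{(i)}_*$: they still give subspaces of $E|_{t_i}$ of the required dimensions on which $\res_{t_i}\nabla$ acts by the corresponding scalar, and the rank argument can be run on these subspaces.
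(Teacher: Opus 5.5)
Your strategy is the paper's. The paper's proof of this corollary just records $T^{(0)}=k(q)$, $T^{(1)}=T^{(3)}=k(0)$, $T^{(2)}=k(1)$ and $T^{(4)}=T^{(5)}=0$ from the omitted proof of Proposition \ref{bdlE8}. That proof is the $E^{(1)}_7$ template of Proposition \ref{bdlE7}: Hessenberg residues in an adapted basis, forced subdiagonal zeros, and $\sum_q m_q=N=10$. Your uniqueness argument via $\Hom(\mcO(1),E)\cong\mbC$ is fine. Two steps, however, do not hold as written.

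\textbf{The global count.} It is false that every non-minimal admissible pattern exceeds $10$.
\begin{itemize}
\item Index the subdiagonal positions $k=1,\dots,5$ from the top left. Position $k$ corresponds to $T^{(5-k)}$ and has weight $6-k$. Your slot convention is inconsistent with this, and the pattern $\{1,3,5\}$ has weight $9$, not $12$.
\item At $0$, the blocks $(2,2,1,1)$ (zeros at positions $2,4,5$) have weight $7$. At $1$, the blocks $(3,2,1)$ (zeros at $3,5$) have weight $4$. Each, together with the minimal weight at the other point, gives exactly $10$.
\item These patterns really occur: they are the cases $q=0$ and $q=1$, since the normal form has subdiagonal $1,z,z-1,z,z-q$.
\end{itemize}
What you actually need is this: every admissible zero set at $0$ that omits position $2$ or $4$ has weight at least $8$, and every admissible zero set at $1$ that omits position $3$ has weight at least $5$. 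Both are quick checks. They force $T^{(1)}_0,T^{(3)}_0,T^{(2)}_1\neq 0$. Your final step then gives $T^{(0)}=k(q)$, with $q$ allowed to be $0$ or $1$.

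\textbf{The degenerate case.} Dimension or rank counts of the flag pieces do not suffice when exponents coincide. If $\nu^{(0)}_1=\nu^{(0)}_2=\nu$, you only get $\dim\Ker(\res_0\nabla-\nu)\ge 3$. This is compatible with diagonal blocks of sizes $(4,1,1)$, which have weight $3$; for example, a Jordan block $J_4(\nu)$ followed by two $1\times 1$ blocks, with suitable off-diagonal entries.

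The input you need is the identity $\prod_{j=1}^{s_i}(\res_{t_i}\nabla-\nu^{(i)}_j)=0$, which follows from Definition \ref{pcdef} for every $\boldsymbol{\nu}$. Each unreduced Hessenberg diagonal block is nonderogatory, because the first basis vector is cyclic. Its characteristic polynomial therefore divides this product, so its size is at most $s_i$: at most $2$ at $0$ and at most $3$ at $1$. This size bound alone yields all the local constraints above, uniformly in $\boldsymbol{\nu}$, with no eigenspace bookkeeping. This is presumably what the paper's ``by the definition of $l^{(0)}_*$'' amounts to.
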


\subsection{Apparent map}
Since the apparent singularity divisor of $(E, \nabla, l_*)$ uniquely determined by Corollary \ref{appE7} and \ref{appE8}, we denote $\App(E, \nabla, \sigma)$ by $\App(E, \nabla)$. Moreover, we have $\App(E, \nabla)=2[0]+[q]$ for $R^\perp=E^{(1)}_7$ and $\App(E, \nabla)=6[0]+3[1]+[q]$ for $R^\perp=E^{(1)}_8$. Hence, the apparent singularity divisor induces a morphism
\[
\App \colon \mcM\longrightarrow \pl, \quad (E, \nabla, l_*) \longmapsto q
\]
We call the morphism $\App$ the \textit{apparent map}.

\subsection{Construction of the morphism from the moduli space to the projective plane}\label{dualparasec}
In this section, we construct a morphism $\psi \colon \mcM\to \mbP^2$. First, we construct a morphism $\varphi \colon \mcM\to \mbL$. Let us fix a decomposition $E\cong \mcO([\infty])\oplus \mcO^{\oplus r-1}$. Let $d_{\infty} \colon \mcO([\infty]) \rightarrow \mcO([\infty])\otimes \Omega_{\pl}^1([\infty])$ and $d\colon \mcO\rightarrow \mcO \otimes  \Omega_{\pl}^1$ be the canonical connections, and set $\nabla_0=d_{\infty}\oplus d^{\oplus r-1}$. We define the $\mcO$-homomorphism $B$ by
\[
g \colon E\overset{\nabla-\nabla_0}{\longrightarrow} E\otimes L\longrightarrow E/E^1\otimes L.
\]
We note that $g$ does not depend on choices of a decomposition of $E$. Let $q=\App(E, \nabla)$. Since $u^{(0)}|_q=0$, where $u^{(0)}$ is defined in (\ref{uj}), we have $g|_q=0$. Hence, the homomorphism $g$ induces an $\mcO$-homomorphism $h\colon (E/E^{1})|_{q} \rightarrow (E/E^{1}\otimes L)|_q$.
\begin{equation}
	\begin{tikzcd}
		0 \arrow[r]&E^1|_{q}\arrow[r]\arrow[rd,"0"]&E|_{q} \arrow[r]\arrow[d, "g|_q"]&(E/E^1)|_{q}\arrow[r] \arrow[ld, "h"]&0 \\
		&&(E/E^1\otimes L)|_q&&
	\end{tikzcd}
\end{equation}
The map $h$ induces a linear map $h^\vee \colon (L^\vee)|_q \rightarrow\End\, (E/E^{j})|_q \cong \mbC$. Let $\mcJ$ be the kernel of the map
\[
\Sym(L^\vee) \longrightarrow \Sym(L^\vee)|_q \overset{h^\vee}{\longrightarrow} \mbC.
\]
Then, the image
\[
\mcI:=\Image (\pi^*\mcJ\to \pi^*\pi_*\mcO_{\mbL}\to \mcO_{\mbL})
\]
is an ideal sheaf of $\mcO_{\mbL}$. Here, we note that  $\pi_*\mcO_{\mbL} \cong \Sym(L^\vee)$. Let $Z:=\Supp (\mcO_{\mbL}/\mcI)$ .

We define a morphism $\varphi$ by
\[
\varphi\colon \mcM\to \mbL, \quad (E, \nabla, l_*)\mapsto Z.
\]
We consider this morphism in terms of coordinates. Let $U_0=\pl\setminus \{\infty\}$ and $U_\infty=\pl \setminus \{0\}$. Let $z, w$ be coordinates of $U_0$ and $U_\infty$, respectively, with $w=z^{-1}$. Then, $\tfrac{dz}{z(z-1)}$ and $\tfrac{dw}{w(w-1)}$ be a section of $L$ on $U_0$ and $U_\infty$, respectively, and we have isomorphisms
\[
\mbC^2 \overset{\sim}{\rightarrow} \pi^{-1}(U_0);  \quad (q, p)\mapsto (q, p\tfrac{dz}{z(z-1)}),  \quad  \mbC^2 \overset{\sim}{\rightarrow} \pi^{-1}(U_\infty); \quad (\tilde{q}, \tilde{p})\mapsto (\tilde{q}, \tilde{p}\tfrac{dw}{w(w-1)}).
\]
Here, coordinates $q, p, \tilde{q}, \tilde{p}$ satisfy $\tilde{q}=q^{-1}$ and $\tilde{p}=q^{-1}p$. Then, $\varphi(E, \nabla, l_*)$ can be written in the form $(q, p\tfrac{dz}{z(z-1)})$ or $(\tilde{q}, \tilde{p}\tfrac{dw}{w(w-1)})$, and accordingly we have $\App(E, \nabla, l_*)=q$ or $\App(E, \nabla, l_*)=\tilde{q}$, respectively. We call the parameter $p$ and $\tilde{p}$ the \textit{dual parameter} of $q$ and $\tilde{q}$, respectively.

Next, we construct a morphism $\psi \colon \mcM\to \mbP^2$.
 Let $\mbL \rightarrow \mbP(\mcO\oplus L)$ be the inclusion and $\mbP(\mcO\oplus L)\rightarrow \mbP^2$ be the blow-down along the infinite section, which is a $(-1)$-curve since $L\cong \mcO(1)$. By composing these morphisms,  we obtain a morphism
 \begin{equation}\label{mtop2}
\psi \colon \mcM\longrightarrow  \mbL \longrightarrow \mbP(\mcO\oplus L)\longrightarrow \mbP^2.
 \end{equation}
 In terms of coordinates, the composition $\mbL\rightarrow \mbP(\mcO\oplus L)\rightarrow \mbP^2$ is given by
\[
(q, p)\mapsto (q: p: 1) \quad \text{on $U_0$, and } 
(\tilde{q}, \tilde{p})\mapsto (1: \tilde{p}: \tilde{q}) \quad \text{on $U_\infty$}.
\]

\section{Moduli space of logarithmic connections and initial condition space of the difference Painlev\'e equation}
In this section, we show that the moduli space of $\boldsymbol{\alpha}$-stable parabolic logarithmic connections with spectral type $(2^2, 1^4, 1^4)$ and $(3^2, 2^3, 1^6)$ is isomorphic to the space of initial condition space of the difference Painlev\'e equation with symmetry of type $E^{(1)}_7$ and $E^{(1)}_8$, respectively.

\subsection{Normal form of logarithmic connections with spectral type $(2^2, 1^4, 1^4)$, $(3^2, 2^3, 1^6)$}\label{normalform}
By Proposition \ref{bdlE7}, \ref{bdlE8}, the underlying bundles of $\boldsymbol{\alpha}$-stable parabolic connections are isomorphic to $\mcO(1)\oplus \mcO^{\oplus r-1}$. We take a local basis $e_1, \ldots, e_r$ (resp. $\tilde{e}_1, \ldots, \tilde{e}_r$) of $\mcO(1)\oplus \mcO^{\oplus r-1}$ on $U_0$ (resp. on $U_\infty$) satisfying 
$e_i=\tilde{e}_i$ for $i=1, \ldots, r-1$ and $e_r=w\tilde{e}_r$. Then, $\nabla$ is written as $\nabla(e_r, \ldots, e_1)=(e_r, \ldots, e_1)A(z)\tfrac{dz}{z(z-1)}$ on $U_0$ and $\nabla(\tilde{e}_r, \ldots, \tilde{e}_1)=(\tilde{e}_r, \ldots, \tilde{e}_1)B(w)\frac{dw}{w(w-1)}$ on $U_\infty$. Our aim is to write down $A(z)$ and $B(w)$ by the parameters $q, p, \tilde{q}, \tilde{p}$. Let $\{F^i\}$ be the filtration of subseaves of $E$ satisfying Assumption \ref{appfil}. Let $E^i$ be the saturation of $F^i$. We assume that $e_{j+1}, \ldots, e_r$ generate $E^j$ for all $j$.

First, we consider $\boldsymbol{\nu}$-parabolic connections with spectral type $(2^2, 1^4, 1^4)$ for $\boldsymbol{\nu} \in \mcN$.

\begin{proposition}\label{AE7}
Assume that $\App(E, \nabla)\neq \infty$. Then, $A(z)$ is written as
\[A(z)=
\begin{pmatrix}
	0&a_{1\infty}z(z-1)+a_{11}z-a_{10}(z-1)&z(a_{2\infty}(z-1)+a_{21})&z(a_{3\infty}(z-1)+\mu_1)\\
	1&(\nu_0+\nu_1)z-\frac{1}{2}\nu_0&0&0\\
	0&z&-\frac{1}{2}\nu_0-p&-\mu(z-1) \\
	0&0&z-q&p
\end{pmatrix},
\]
where $a_{kl}, \mu_j$ are constants. $\mu_0, \mu_1$ satisfy with
\begin{equation}\label{muE7}
	q\mu_0=(p+\nu^{(0)}_{1})(p+\nu^{(0)}_{2}), \quad (q-1)\mu_1=(p-\nu^{(1)}_{1})(p-\nu^{(1)}_{2})(p-\nu^{(1)}_{3})(p-\nu^{(1)}_{4}), 
\end{equation}
and the constant $a_{kl}$ is uniquely determined by $q, p,\mu_0, \mu_1$, and $\boldsymbol{\nu}$. Under this form, we have $\App(E, \nabla)=q$ and $\varphi(E, \nabla, l_*)=(q, p\tfrac{dz}{z(z-1)})$.
\end{proposition}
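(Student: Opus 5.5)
The plan is to pin down $A(z)$ step by step using the bundle $E\cong\mcO(1)\oplus\mcO^{\oplus 3}$ from Proposition \ref{bdlE7}, the filtration $E^3\subset E^2\subset E^1\subset E$ with generators $e_{j+1},\dots,e_4$ of $E^j$, and the remaining freedom of automorphisms of $E$ preserving this flag. Holomorphy of $\nabla$ at $\infty$ in the frame $\tilde e$ (with $e_4=w\tilde e_4$) bounds the degrees of the entries: writing $A(z)=\sum A_k z^k$, the column and row attached to $e_4$ shift degrees by one. Concretely, row $4$ entries have degree $\le 1$, the first-row entries in columns $1$--$3$ have degree $\le 2$, and the rest have degree $\le 1$. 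The residue at $\infty$ is then read off from the leading coefficients.

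By Corollary \ref{appE7} we have $T^{(2)}=T^{(3)}=0$, $T^{(1)}=k(0)$ and $T^{(0)}=k(q)$, and these translate directly into the subdiagonal entries:
\begin{itemize}
\item $u^{(2)}\colon E^3\to E^2/E^3\otimes L$ is an isomorphism $\mcO(1)\to\mcO\otimes L$, so the $(2,1)$ entry is a nonzero constant, which we normalize to $1$ by rescaling $e_3$.
\item $u^{(1)}$ vanishes exactly at $0$ to first order, so the $(3,2)$ entry is $cz$, normalized to $z$.
\item $u^{(0)}$ vanishes exactly at $q$, so the $(4,3)$ entry is $z-q$.
\end{itemize}
The flag condition $\nabla(E^j)\subset E^{j-1}\otimes L$ forces zeros below the subdiagonal. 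Next I use the unipotent flag-preserving gauge transformations $e_{j}\mapsto e_j+(\text{poly})\,e_{j'}$ with $j'>j$ (of degree $\le 1$ when $j'=4$, constant otherwise) to kill entries. This should give the zeros in positions $(1,1)$, $(2,3)$, $(2,4)$ and make the $(2,2)$ entry a specific linear form. I would do this column by column, starting from the top, since each gauge step adds a multiple of a subdiagonal entry, which is $1$, $z$ or $z-q$. The diagonal scaling of $e_1$ is fixed at the end by demanding that the $(4,4)$ entry at $z=q$ equal $p$. That this constant is exactly the dual parameter $p$ follows from the definition of $h$ through $g=(\nabla-\nabla_0)\bmod E^1$: the last row of $A$ modulo $E^1$ evaluated at $q$ is the $(4,4)$ entry, because the $(4,3)$ entry vanishes there. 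This gives $\varphi=(q,p\frac{dz}{z(z-1)})$ and $\App=q$.

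The trace and the residues constrain the diagonal. Fixing the $(3,3)$ entry as $-\frac12\nu_0-p$ and the $(2,2)$ entry as $(\nu_0+\nu_1)z-\frac12\nu_0$ uses $\tr \res_0$, $\tr\res_1$ and the Fuchs relation $\nu_0+\nu_1+\nu_2=-1$, together with the residual gauge freedom. The remaining unknowns are the first-row coefficients $a_{kl}$ and the $(3,4)$ coefficient $\mu$, plus the constants $\mu_0,\mu_1$ that appear as values of entries at $z=0$ and $z=1$.

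The spectral conditions are the main obstacle. At $z=0$ the residue $A(0)/(-1)$ must satisfy the spectral type $2^2$, i.e. $(\res_0-\nu^{(0)}_1)(\res_0-\nu^{(0)}_2)=0$, which is a rank condition on the $2\times2$ blocks. At $z=1$ the residue has simple spectrum $\nu^{(1)}_j$, and at $\infty$ the leading data has spectrum $\nu^{(2)}_j$. The strategy is to reduce these to characteristic-polynomial identities. At $z=1$ the companion-like structure (subdiagonal $1,1,1-q$) means the characteristic polynomial determines the first row linearly, and evaluating it at the eigenvalue $p$ of the bottom corner yields $(q-1)\mu_1=\prod_j(p-\nu^{(1)}_j)$. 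Similarly, the minimal-polynomial condition at $0$, restricted to the lower-right block where the $(4,3)$ entry is $-q$, yields $q\mu_0=(p+\nu^{(0)}_1)(p+\nu^{(0)}_2)$.

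After that, the three systems are linear in the $a_{kl}$ with coefficients depending on $q,p,\mu_0,\mu_1,\boldsymbol\nu$. I would verify unique solvability by counting: there are six unknowns $a_{kl}$, the coefficients of the characteristic polynomial at $1$ and $\infty$ impose three conditions each, and the rank condition at $0$ is accounted for by the $\mu_0$ relation. Genericity, i.e. invertibility of the linear system, is the delicate point, and I would check it by explicit triangularity in the gauge-fixed form.
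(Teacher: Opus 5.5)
Your overall route is the paper's: subdiagonal $1,z,z-q$ from the torsion sheaves $T^{(j)}$, degree bounds from $e_4=w\tilde e_4$, flag-preserving gauge, then residue conditions at $0,1,\infty$. Two of your steps are fine as stated. First, $p=a_{44}(q)$ follows from the definition of $h$. Second, evaluating $\det(\lambda-A(1))$ at $\lambda=p$ gives $(q-1)\mu_1=\prod_j(p-\nu^{(1)}_j)$, because $a_{34}(1)=0$.

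The gap is at $z=0$. The normal form has the $(1,3)$ and $(1,4)$ entries divisible by $z$, i.e.\ $a_{13}(0)=a_{14}(0)=0$, and nothing in your argument produces this. You write down the right condition $(\res_0\nabla-\nu^{(0)}_1)(\res_0\nabla-\nu^{(0)}_2)=0$, but then say it is accounted for by the $\mu_0$ relation, which is false. The characteristic polynomials of the two diagonal $2\times 2$ blocks of $-A(0)$ give only $a_{22}(0)=-\frac12\nu_0$, $a_{10}=-\nu^{(0)}_1\nu^{(0)}_2$, $a_{33}=-\frac12\nu_0-p$ and $q\mu_0=(p+\nu^{(0)}_1)(p+\nu^{(0)}_2)$. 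What you are missing is the upper-right block of $(A(0)+\nu^{(0)}_1)(A(0)+\nu^{(0)}_2)=0$, namely
$(A_{11}+\nu^{(0)}_1)A_{12}+A_{12}(A_{22}+\nu^{(0)}_2)=0$.
This is what the paper uses, phrased as the parabolic condition at $0$, which also covers $\nu^{(0)}_1=\nu^{(0)}_2$. After your gauge fixing $a_{23}=a_{24}=0$, the second row of $A_{12}$ vanishes and the second row of $A_{11}+\nu^{(0)}_1$ is $(1,\,a_{22}(0)+\nu^{(0)}_1)$. So the second row of this block is exactly $(a_{13}(0),a_{14}(0))$, and it must vanish.

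Without this step your uniqueness claim fails. Before imposing it, the first row carries nine coefficients. The remaining conditions are one at $0$ (the determinant of the upper-left block, fixing $a_{12}(0)$), three at $1$ and three at $\infty$. That leaves $a_{13}(0),a_{14}(0)$ free, and the larger family contains connections whose residue at $0$ has the right characteristic polynomial but not the minimal polynomial required by spectral type $2^2$.

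Your count ``six unknowns, three conditions at $1$ and three at $\infty$'' also misallocates the conditions. The three non-trace conditions at $1$ determine the normal-form constants $a_{11}$, $a_{21}$ and $\mu_1$, so $a_{10}$ must come from the upper-left block at $0$.

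Two smaller slips:
\begin{itemize}
\item The degree bounds should say that the $e_4$-row entries in columns $2$--$4$ are quadratic and the $e_4$-column below the diagonal is constant.
\item Constant diagonal rescalings do not change $a_{44}$, so $p$ is not fixed by normalizing $e_1$ (which you already used to make the $(4,3)$ entry $z-q$). It is intrinsic, as your next sentence correctly says.
\end{itemize}
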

\begin{proof}
Let $q=\App(E, \nabla)$. Since $T^{(0)}=k(q)$, $T^{(1)}=k(0)$ and $T^{(2)}=T^{(3)}=0$, 
$A(z)$ can be written as
\[A(z)=
\begin{pmatrix}
	a_{11}(z)&a_{12}(z)&a_{13}(z)&a_{14}(z)\\
	1&a_{22}(z)&a_{23}(z)&a_{24}(z)\\
	0&z&a_{33}(z)&a_{34}(z) \\
	0&0&z-q&a_{44}(z)
\end{pmatrix}, 
\]
where $a_{ii}(z), a_{jk}(z)$ are linear polynomials for all $i$ and $1<j<k$, and $a_{1k}(z)$ is a quadratic polynomial for $k>1$.
For a suitable automorphism of $E$, we may assume that $a_{33}(z)$ and $a_{44}(z)$ are constants, $a_{34}(z)$ is of the form $-\mu_0(z-1)$, and $a_{11}(z)=a_{23}(z)=a_{24}(z)=0$. $A(0)$ is a block upper triangular matrix. Since the eigenpolynomial of each diagonal block of $\res_{0}\nabla=-A(0)$ is $(\lambda-\nu^{(0)}_1)(\lambda-\nu^{(0)}_2)$, and $\tr A(z)=\nu_0(z-1)+\nu_1z$, the polynomial $a_{22}(z)$ and the constants $a_{12}(0)$ and $a_{33}(z)$ are uniquely determined by $a_{44}(z)$ and $\boldsymbol{\nu}$. Moreover, we can find the relation satisfied by $\mu_0$. From the condition $(\res_{0}\nabla)(l^{(0)}_i)\subset l^{0}_{i+1}$, we can see that $a_{13}(0)=a_{14}(0)=0$. Hence, $A(z)$ can be written as the assertion. We can find the relation satisfied by $a_{kl}$ and $\mu_{1}$ using the condition that the eigenpolynomial of $\res_{t_i}\nabla$ is $(\lambda-\nu^{(1)}_1)(\lambda-\nu^{(1)}_2)(\lambda-\nu^{(1)}_3)(\lambda-\nu^{(1)}_4)$.
\end{proof}
\begin{remark}\label{blowrem}
From (\ref{muE7}), when $q=0$, we see that $p=-\nu^{(0)}_i$ and $\mu_0$ is a free parameter. This motivates blowing up $\mbP^2$ at $(0:-\nu^{(0)}_i:1)$. By a similar argument, we are motivated to blow up $\mathbb{P}^2$ at $(1: \nu^{(1)}_i:1)$.
\end{remark}

Similarly, we have the following.
\begin{proposition}
Assume that $\App(E, \nabla, l_*)\neq 0$. Then, $B(w)$ is written as
\[B(w)=
\begin{pmatrix}
	0&b_{1\infty}w(w-1)+b_{11}w-b_{10}(w-1)&b_{2\infty}(w-1)+b_{21}w&\lambda_2(w-1)+\lambda_1w\\
	1&-(1+\frac{1}{2}\nu_0)w-\nu_\infty&0&0\\
	0&1&-\frac{1}{2}\nu_0w-\tilde{p}&-\nu^{(0)}_{0}\nu^{(0)}_{1}w\\
	0&0&w-\tilde{q}&\tilde{p}
\end{pmatrix}
\]
where $b_{kl}, \lambda_j$ are constants. $\lambda_1, \lambda_2$ satisfy with
\begin{align}
\begin{split}\label{lambdaE7}
(\tilde{q}-1)\lambda_{1}=&(\nu^{(0)}_{1}\nu^{(0)}_{2})^2(\tilde{q}-1)^2+(\tilde{p}-\nu^{(1)}_{1})(\tilde{p}-\nu^{(1)}_{2})(\tilde{p}-\nu^{(1)}_{3})(\tilde{p}-\nu^{(1)}_{4})\\
&+(2\tilde{p}^2-\nu_1\tilde{p}+\tfrac{1}{4}\nu_0^2+\tfrac{1}{2}\nu_0\nu_1+f_2(\nu^{(1)}))\nu^{(0)}_{1}\nu^{(0)}_{2}(\tilde{q}-1),\\
\tilde{q}\lambda_2=&(\tilde{p}+\nu^{(2)}_{1})(\tilde{p}+\nu^{(2)}_{2})(\tilde{p}+\nu^{(2)}_{3})(\tilde{p}+\nu^{(2)}_{4}),	
\end{split}
\end{align}
and the constants
 $b_{kl}$ is uniquely determined by $\tilde{q}, \tilde{p},\lambda_1, \lambda_2$, and $\boldsymbol{\nu}$. Under this form, we have $\App(E, \nabla, l_*)=\tilde{q}$ and $\varphi(E, \nabla, l_*)=(\tilde{q}, \tilde{p}\frac{dw}{w(w-1)})$.
\end{proposition}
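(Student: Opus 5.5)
We follow the proof of Proposition \ref{AE7}, now working on $U_\infty$ with the coordinate $w$ and the basis $\tilde{e}_4,\ldots,\tilde{e}_1$. Let $\tilde{q}=\App(E,\nabla)$, which is defined since the apparent singularity is not $0$. By Corollary \ref{appE7} and its proof, $T^{(0)}=k(\tilde q)$, $T^{(1)}=k(0)$ in the $z$-coordinate, and $T^{(2)}=T^{(3)}=0$. The point $z=0$ is $w=\infty$, so on $U_\infty$ the torsion $T^{(1)}$ sits at infinity and is invisible. It shows up only through the gluing $e_4=w\tilde{e}_4$.

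\textbf{Step 1: triangular shape.} The subdiagonal entries of $B(w)$ come from the maps $\Gr^j(\nabla)$ of Assumption \ref{appfil}, written in the saturated basis $\tilde e_j$. They are:
\begin{itemize}
\item the $(2,1)$ entry is $1$, since $T^{(2)}=0$;
\item the $(3,2)$ entry is the unit $1$, since $T^{(1)}$ is supported at $w=\infty$;
\item the $(4,3)$ entry is $w-\tilde q$, since $T^{(0)}=k(\tilde q)$.
\end{itemize}
Entries below the subdiagonal vanish, because $\nabla(E^j)\subset E^{j-1}\otimes L$ by Lemma \ref{saturation}. The degree bounds on the remaining entries follow from the transition $A\leftrightarrow B$: the gauge change is $\mathrm{diag}(w,1,1,1)$ together with $\frac{dz}{z(z-1)}=-\frac{dw}{w(w-1)}$ up to the $d$-term from $\tilde e_4$. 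This gives quadratic polynomials in the first row and linear polynomials elsewhere.

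\textbf{Step 2: gauge normalization.} We use the automorphisms of $\mcO(1)\oplus\mcO^{3}$ preserving the filtration $E^j$. Concretely, these are upper triangular gauge matrices with polynomial entries of degree at most $1$ in the first row and constant entries elsewhere. With them we kill the $(1,1)$, $(2,3)$ and $(2,4)$ entries, and make the $(4,4)$ entry a constant. The dual parameter $\tilde p$ is read off from $h$ in Section \ref{dualparasec}, exactly as in Proposition \ref{AE7}. By that definition, $\tilde p$ is the $(4,4)$ entry evaluated at $\tilde q$.

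\textbf{Step 3: local exponents.} At $w=0$ (that is, $z=\infty$) the residue $-B(0)$ must have characteristic polynomial $\prod_j(\lambda-\nu^{(2)}_j)$. At $w=1$ it must have characteristic polynomial $\prod_j(\lambda-\nu^{(1)}_j)$. The trace condition fixes the $(2,2)$ entry as $-(1+\frac12\nu_0)w-\nu_\infty$ and the $(3,3)$ entry as $-\frac12\nu_0 w-\tilde p$.

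The $(3,4)$ entry $-\nu^{(0)}_1\nu^{(0)}_2 w$ is not a free parameter. It is forced by compatibility with the $U_0$ form at $w=\infty$, where the $2^2$-condition at $z=0$ imposes a rank condition on $A(0)$. Equivalently, it comes from transporting $\mu_0$ via \eqref{muE7} and the relation $\mu_0 q=(p+\nu^{(0)}_1)(p+\nu^{(0)}_2)$ rewritten with $\tilde q=q^{-1}$, $\tilde p=q^{-1}p$.

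Expanding $\det(\lambda+B(0))$ and $\det(\lambda+B(1))$ yields linear equations in the unknowns $b_{kl}$, $\lambda_1$, $\lambda_2$. Because of the companion-like shape, the determinant at $w=0$ factors with coefficient $\tilde q$ in front of $\lambda_2$, which gives the second relation in \eqref{lambdaE7}. The $w=1$ computation gives the first relation, with the extra terms coming from the nonzero $(3,4)$ entry. The $b_{kl}$ are then solved uniquely in terms of $\tilde q,\tilde p,\lambda_1,\lambda_2,\boldsymbol\nu$. Finally, since $E^1$ is spanned by $\tilde e_4,\tilde e_3,\tilde e_2$, the map $g$ factors through the last row. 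This confirms $\App=\tilde q$ and $\varphi=(\tilde q,\tilde p\frac{dw}{w(w-1)})$.

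\textbf{Main obstacle.} The hard step is the determinant computation at $w=1$. It has to produce the precise expression for $(\tilde q-1)\lambda_1$, including the $(\nu^{(0)}_1\nu^{(0)}_2)^2(\tilde q-1)^2$ term, while simultaneously checking that the $2^2$ multiplicity condition at $z=0$ (now at $w=\infty$) forces the $(3,4)$ entry. I would first do the $w=0$ case, where the structure is the cleanest. Then I would handle $w=1$ by conjugating to reduce to the same companion structure, and treat the remaining terms as corrections.
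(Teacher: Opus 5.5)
Your strategy is the one the paper intends: it proves this statement only by saying ``similarly'' to Proposition~\ref{AE7}. However, Steps 2--3 have a real gap in the bookkeeping of which entries are normalized and which are forced. Order rows and columns as $\tilde e_4,\tilde e_3,\tilde e_2,\tilde e_1$.

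You use only seven of the nine available gauge parameters. The other two are the constant strictly upper triangular entries of the $\mcO^{3}$-block.
\begin{itemize}
\item The change $\tilde e_1\mapsto\tilde e_1+c\,\tilde e_3$ shifts the $(3,4)$ entry by $cB_{32}=c$. It leaves $(3,3)$ and $(4,4)$ untouched, since $B_{42}=0$.
\item The change $\tilde e_2\mapsto\tilde e_2+c'\tilde e_3$ moves the constant $c'$ from the $(2,2)$ entry to the $(3,3)$ entry.
\end{itemize}
Their side effects on $(2,3)$ and $(2,4)$ are removed again by the first-row parameters, which do not touch these entries. So the constant term of the $(3,4)$ entry, and the split of constant terms between $(2,2)$ and $(3,3)$, are gauge artifacts. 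No trace or eigenvalue condition can force them. They must be imposed as normalizations, exactly as ``$a_{33}$ constant'' and ``$a_{34}=-\mu_0(z-1)$'' are in the proof of Proposition~\ref{AE7}. Hence two of your claims are false as stated: that the trace fixes both $(2,2)$ and $(3,3)$, and that $-\nu^{(0)}_1\nu^{(0)}_2w$ is entirely forced. The trace fixes only the sum, and only the $w$-coefficients are forced.

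Those $w$-coefficients, and more, come from the residue at $z=0$, i.e.\ at $w=\infty$. You invoke it only vaguely and for one entry. Compute it intrinsically in the $w$-chart via $e_4=w\tilde e_4$ and $\frac{dw}{w(w-1)}=\frac{dz}{z-1}$; ``compatibility with the $U_0$ form'' is not available on the fibre $\tilde q=0$. It yields three facts:
\begin{itemize}
\item the leading-coefficient block $\left(\begin{smallmatrix}-\frac12\nu_0&-\nu^{(0)}_1\nu^{(0)}_2\\ 1&0\end{smallmatrix}\right)$ of the lower $2\times 2$ block has characteristic polynomial $(\lambda+\nu^{(0)}_1)(\lambda+\nu^{(0)}_2)$;
\item the upper block gives $b_{1\infty}=-(1+\nu^{(0)}_1)(1+\nu^{(0)}_2)$, where the shift by $1$ comes from the frame change;
\item semisimplicity of $\res_0\nabla$ kills the $w^2$-coefficients of the $(1,3)$ and $(1,4)$ entries, which your Step 1 only bounds as quadratic. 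This is where the rank condition really enters, as the analogue of $a_{13}(0)=a_{14}(0)=0$.
\end{itemize}
Without these, the residues at $w=0,1$ give just $3+3$ non-trace equations, linear in $(b_{10},b_{2\infty},\lambda_2)$ and in $(b_{11},b_{21},\lambda_1)$. So ``the $b_{kl}$ are solved uniquely'' does not follow.

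There are also two slips.
\begin{itemize}
\item The correct identity is $\frac{dz}{z(z-1)}=w\,\frac{dw}{w(w-1)}$, not $-\frac{dw}{w(w-1)}$. With your identity the transformed entries would be Laurent polynomials, so the degree bounds would not follow.
\item The residue at $w=1$ is $+B(1)$, so you need $\det(\lambda-B(1))$. With that sign, evaluate at $\lambda=\tilde p$ and insert $b_{11}$, which contains $-\nu^{(0)}_1\nu^{(0)}_2(\tilde q-1)$. This does give the stated formula for $(\tilde q-1)\lambda_1$, so your diagnosis of where the $(\nu^{(0)}_1\nu^{(0)}_2)^2(\tilde q-1)^2$ term comes from is right.
\end{itemize}
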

In Appendix \ref{abform}, we give the explicit form of $a_{kl}$ and $b_{kl}$.

Next, we consider $\boldsymbol{\nu}$-parabolic connections with spectral type $(3^2, 2^3, 1^6)$ for $\boldsymbol{\nu} \in \mcN$. 
\begin{proposition}\label{AE8}
	Assume that $\App(E, \nabla)\neq \infty$. Then, $A(z)$ is written as
	\[A(z)=
	\begin{pmatrix}
		0&a_1(z)&a_2(z)&a_3z(z-1)&a_4z(z-1)&a_5z(z-1)\\
		1&(\nu_0+\nu_1)z-\frac{1}{3}\nu_0&0&0&0&0\\
		0&z&-\frac{2}{3}\nu_0-\frac{1}{2}\nu_1&-\mu_1(z-1)&0&0\\
		0&0&z-1&\frac{1}{3}\nu_0+\frac{1}{2}\nu_1&a_{7}z&a_{8} z\\
		0&0&0&z&-p-\frac{1}{3}\nu_0&-\mu_0(z-1)\\
		0&0&0&0&z-q&p
	\end{pmatrix},
	\]
	where $a_1(z)$ is a quadratic polynomial, $a_2(z)$ is a linear polynomial, $a_i, \mu_j$ are constants. $\mu_0, \mu_1$ satisfy with
	\begin{equation}\label{muE8}
	q\mu_0=(p+\nu^{(0)}_{1})(p+\nu^{(0)}_{2}), \quad
	(q-1)\mu_1=(p-\nu^{(1)}_{1})(p-\nu^{(1)}_{2})(p-\nu^{(2)}_{3}),
	\end{equation}
	and the polynomials $a_1(z), a_2(z)$ and the constant $a_{i}$ is uniquely determined by $q, p,\mu_0, \mu_1$, and $\boldsymbol{\nu}$. Under this form, we have $\App(E, \nabla, l_*)=q$ and $\varphi(E, \nabla, l_*)=(q, p\tfrac{dz}{z(z-1)})$.
\end{proposition}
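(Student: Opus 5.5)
We follow the proof of Proposition \ref{AE7}, now in rank six. The structural input is the description of the torsion sheaves from the proof of Proposition \ref{bdlE8}: $T^{(0)}=k(q)$, $T^{(1)}=T^{(3)}=k(0)$, $T^{(2)}=k(1)$ and $T^{(4)}=T^{(5)}=0$.

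\textbf{Step 1: the subdiagonal.} Use the basis $e_1,\ldots,e_6$ with $e_{j+1},\ldots,e_6$ generating $E^j$. The maps $u^{(j)}\colon E^{j+1}\to E^j/E^{j+1}\otimes L$ are multiplication by a polynomial whose divisor is the support of $T^{(j)}$. After rescaling the $e_j$, the subdiagonal entries of $A(z)$ are therefore
\[
(1,\ z,\ z-1,\ z,\ z-q).
\]
Here $1$ comes from $T^{(0)}$ being at $q$ only after the ordering; we match the ordering to the matrix in the statement. Since $E\cong \mcO(1)\oplus\mcO^{\oplus5}$ and $L\cong\mcO(1)$, the other entries are constrained by degree. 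The first-row entries $a_{1k}(z)$ are quadratic for $k>1$. All entries $a_{jk}$ with $j\ge2$ are linear. Entries below the subdiagonal vanish because $\nabla(E^j)\subset E^{j-1}\otimes L$.

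\textbf{Step 2: gauge normalization.} We use the automorphisms of $E$ preserving the flag $E^\bullet$. These are upper triangular, with the $(1,k)$ entries linear in $z$ because $\Hom(\mcO,\mcO(1))$ is two-dimensional. We also use the scalings that preserve the normalized subdiagonal. Working column by column from the bottom, as in Proposition \ref{AE7}, we arrange three things.
\begin{itemize}
\item The diagonal entries $a_{33},\ldots,a_{66}$ are constants, and $a_{11}=0$.
\item The row-two entries $a_{2k}$ vanish for $k\ge3$.
\item The entries $a_{56}$ and $a_{34}$ take the forms $-\mu_0(z-1)$ and $-\mu_1(z-1)$.
\end{itemize}
The trace condition $\tr A(z)=\nu_0(z-1)+\nu_1 z$, up to the sign convention, fixes $a_{22}(z)$.

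\textbf{Step 3: residue conditions at $0$.} $A(0)$ is block upper triangular, with $2\times2$ blocks coming from the zeros of the subdiagonal at $z=0$. Each block must have characteristic polynomial $(\lambda-\nu^{(0)}_1)(\lambda-\nu^{(0)}_2)$. This fixes the diagonal constants, gives $-\tfrac13\nu_0$ and the symmetric splitting shown, and yields $q\mu_0=(p+\nu^{(0)}_1)(p+\nu^{(0)}_2)$ from the bottom block $\begin{pmatrix}-p-\frac13\nu_0&\mu_0\\-q&p\end{pmatrix}$. The flag condition $(\res_0\nabla-\nu^{(0)}_{j+1})(l^{(0)}_j)\subset l^{(0)}_{j+1}$ for the rank-three eigenspaces forces $\res_0\nabla$ to be semisimple on the appropriate pieces. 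This kills the constant terms of $a_{1k}$ for $k\ge4$, so $a_3,a_4,a_5$ multiply $z(z-1)$, and it forces $a_{46},a_{47}$ to be multiples of $z$.

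\textbf{Step 4: residue conditions at $1$ and $\infty$.} At $z=1$ the subdiagonal has a zero only in position $(4,3)$. Imposing spectral type $2^3$ (minimal polynomial of degree three) on the lower $3\times3$ block gives the relation for $\mu_1$. The remaining unknowns are $a_1(z)$, $a_2(z)$, $a_7$, $a_8$ and the constants. These are solved linearly from two conditions: the characteristic polynomial of $\res_1\nabla$ is $\prod(\lambda-\nu^{(1)}_j)^2$ with $\rank(\res_1\nabla-\nu^{(1)}_j)=4$, and the residue at $\infty$ has six distinct exponents $\nu^{(2)}_j$.

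\textbf{Main obstacle.} I expect the main difficulty to be showing that this linear system has a unique solution, which is the claim that $a_1$, $a_2$, $a_i$ are determined by $q$, $p$, $\mu_0$, $\mu_1$ and $\boldsymbol{\nu}$. The plan is to expand the characteristic polynomial at $\infty$ coefficient by coefficient. Each new coefficient should be triangular in one new unknown with a nonzero pivot, namely a product of subdiagonal leading terms. This would be verified by direct computation, with the results recorded in the appendix.

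\textbf{Final identification.} We check from the definition of $g$ and $h$ that in this form $q$ is the zero of $u^{(0)}$. Then $h$ is multiplication by the $(6,6)$ entry $p$ relative to $\nabla_0$, so $\varphi(E,\nabla,l_*)=(q,p\tfrac{dz}{z(z-1)})$.
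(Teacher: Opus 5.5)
\textbf{Gap: rows $3$--$4$ are not correctly normalized.} Your overall route is the paper's: the subdiagonal $(1,z,z-1,z,z-q)$ from the $T^{(j)}$, flag-preserving gauge, the flag conditions at $0$ and $1$, and characteristic polynomials. The problem is in how the normal form is reached in rows $3$--$4$.

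The flag-preserving unipotent gauge has $20$ parameters: five linear entries in row $1$ and ten constants. Your Step~2 uses only $16$ of them. Step~3 then claims that the flag condition at $0$ makes $a_{45},a_{46}$ multiples of $z$, and this is false.

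Take $\nu^{(0)}_1\neq\nu^{(0)}_2$, and let $B_{22},B_{33}$ be the diagonal blocks of $A(0)$ in rows $3,4$ and $5,6$. Semisimplicity of $\res_0\nabla=-A(0)$ imposes on $X=\begin{pmatrix}a_{35}(0)&a_{36}(0)\\ a_{45}(0)&a_{46}(0)\end{pmatrix}$ only the condition
\[
(B_{22}+\nu^{(0)}_1)X+X(B_{33}+\nu^{(0)}_2)=0 .
\]
Its solution space is $2$-dimensional. The $(2,1)$ entry of $B_{22}$ is $a_{43}(0)=-1$. So if the second row of $X$ vanishes, the second row of the left side is $-(a_{35}(0),a_{36}(0))$. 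Hence the solution space is complementary to $\{a_{45}(0)=a_{46}(0)=0\}$. The condition determines $(a_{35}(0),a_{36}(0))$ from $(a_{45}(0),a_{46}(0))$ and puts no constraint on the latter.

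Moreover, nothing in your proposal makes $a_{35},a_{36}$ vanish, although they are zero in the normal form. The paper spends the four remaining gauge parameters on
\[
a_{35}(1)=a_{36}(1)=0,\qquad a_{45}(0)=a_{46}(0)=0 .
\]
The computation above then gives $a_{35}(0)=a_{36}(0)=0$, so $a_{35}=a_{36}\equiv0$. The same argument, now using the $(2,1)$ entry $1$ of the top block, gives $a_{1j}(0)=0$ for all $j\ge 3$.

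\textbf{Two further steps need correcting.} First, vanishing at $0$ only produces a factor $z$ in $a_{14},a_{15},a_{16}$. The factor $z-1$ comes from the flag condition at $z=1$. Once rows $2,3$ vanish beyond column $3$ at $z=1$, the off-diagonal block of $A(1)$ is the single row $(a_{14}(1),a_{15}(1),a_{16}(1))$. Semisimplicity of $\res_1\nabla$ kills it, because the first basis vector is cyclic for the top $3\times3$ block of $A(1)$ (its subdiagonal is $1,1$).

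Second, your Step~4 cannot produce the relation for the $(3,4)$ coefficient. That entry is not in the lower $3\times3$ block of $A(1)$, and it vanishes at $z=1$ anyway. What the lower block actually gives is $(q-1)a_8=\prod_{j=1}^3(p-\nu^{(1)}_j)$ for the $(4,6)$ coefficient $a_8$; this is what the relation displayed for $\mu_1$ in the statement really is (compare Appendix~A). The $(3,4)$ coefficient is fixed by $\boldsymbol{\nu}$ through the middle block of $A(0)$. This uses $a_{44}=\tfrac13\nu_0+\tfrac12\nu_1$, which comes from the trace of the lower block of $A(1)$. So, contrary to your Step~3, the block characteristic polynomials at $0$ alone do not separate $a_{33}$ and $a_{44}$.
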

\begin{proof}
Let $q=\App(E, \nabla)$. Since $T^{(0)}=k(q)$, $T^{(1)}=T^{(3)}=k(0)$, $T^{(2)}=k(1)$, and $T^{(4)}=T^{(5)}=0$, 
$A(z)$ can be written as
\[A(z)=
\begin{pmatrix}
	a_{11}(z)&a_{12}(z)&a_{13}(z)&a_{14}(z)&a_{15}(z)&a_{16}(z)\\
	1&a_{22}(z)&a_{23}(z)&a_{24}(z)&a_{25}(z)&a_{26}(z)\\
	0&z&a_{33}(z)&a_{34}(z)&a_{35}(z)&a_{36}(z)\\
	0&0&z-1&a_{44}(z)&a_{45}(z)&a_{46}(z)\\
	0&0&0&z&a_{55}(z)&a_{56}(z)\\
	0&0&0&0&z-q&a_{66}(z)
\end{pmatrix}.
\]
where $a_{ii}(z), a_{jk}(z)$ are linear polynomials for all $i$ and $1<j<k$, and $a_{1k}(z)$ is a quadratic polynomial for $k>1$. For a suitable automorphism of $E$, we may assume that $a_{ii}(z)$ are constants for $i>2$, $a_{3j}(1)=a_{56}(1)=0$ for $j>3$, $a_{4k}(0)=0$ for $k>4$, and $a_{11}(z)=a_{2l}(z)=0$ for $l>2$. From $(\res_{0}\nabla)(l^{(0)}_i)\subset l^{(0)}_{i+1}$ for all $i$, we can see that $a_{1j}(0)=a_{3k}(0)=0$ for $j>2$ and $k>4$, which implies that  $a_{3k}(z)=0$ for $k>4$. From $(\res_{1}\nabla)(l^{(1)}_i)\subset l^{(1)}_{i+1}$ for all $i$, we can see that $a_{1i}(1)=0$ for $i>3$. Hence, $A(z)$ can be written as the assertion. We can determine entries in the same manner as the proof of Proposition \ref{AE7}.
\end{proof}
\begin{proposition}
	Assume that $\App(E, \nabla, l_*)\neq 0$. Then, $B(w)$ is written as
	\[B(w)=
	\begin{pmatrix}
		0&b_1(w)&b_2(w)&b_3(w)&b_4(w)&b_{5}w-\lambda_2(w-1)\\\\
		1&-(\frac{\nu_0}{3}+1)w-\nu_2&0&0&0&0\\
		0&1&-(\frac{2}{3}\nu_0+\frac{\nu_1}{2})w&b_{6}w&b_{7}w&b_{8}w\\
		0&0&-w+1&(\frac{1}{3}\nu_0+\frac{1}{2}\nu_1)w&b_{9}w&\lambda_1w\\
		0&0&0&1&-\frac{\nu_0}{3}w-\tilde{p}&-\nu^{(0)}_0\nu^{(0)}_1w\\
		0&0&0&0&w-\tilde{q}&\tilde{p}
	\end{pmatrix}
	\]
	Here, $\lambda_1, \lambda_2$ satisfy with
	\begin{align}
	\begin{split}\label{lambdaE8}
		(\tilde{q}-1)\lambda_{1}=&\nu^{(0)}_{1}\nu^{(0)}_{2}(\tilde{q}-1)(\tilde{p}-\tfrac{1}{3}\nu_0-\nu_1)+(\tilde{p}-\nu^{(1)}_{1})(\tilde{p}-\nu^{(1)}_{2})(\tilde{p}-\nu^{(1)}_{3}),\\
		\tilde{q}\lambda_2=&(\tilde{p}+\nu^{(2)}_{1})(\tilde{p}+\nu^{(2)}_{2})(\tilde{p}+\nu^{(2)}_{3})(\tilde{p}+\nu^{(2)}_{4})(\tilde{p}+\nu^{(2)}_{5})(\tilde{p}+\nu^{(2)}_{6}),	
	\end{split}
	\end{align}
	and all scalar $b_{k}$ is uniquely determined by $\tilde{q}, \tilde{p},\lambda_1, \lambda_2$, and $\boldsymbol{\nu}$. Under this form, we have $\App(E, \nabla, l_*)=\tilde{q}$ and $\varphi(E, \nabla, l_*)=(\tilde{q}, \tilde{p}\tfrac{dw}{w(w-1)})$.
\end{proposition}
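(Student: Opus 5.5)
The plan is to follow the proof of Proposition \ref{AE8}, now working on $U_\infty$ with the basis $\tilde{e}_1,\ldots,\tilde{e}_6$ and coordinate $w$. First I will fix the filtration. By Proposition \ref{bdlE8} and Corollary \ref{appE8}, the torsion sheaves are $T^{(0)}=k(\tilde{q})$, $T^{(1)}=T^{(3)}=k(0)$, $T^{(2)}=k(1)$ and $T^{(4)}=T^{(5)}=0$, where now $0$ means $z=0$, i.e. $w=\infty$. On $U_\infty$ I will choose $\tilde{e}_{j+1},\ldots,\tilde{e}_6$ generating $E^j$. The subdiagonal entries of $B(w)$ are the local generators of the maps $u^{(j)}$ from (\ref{uj}), written with respect to $\tfrac{dw}{w(w-1)}$. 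Thus the $(6,5)$ entry vanishes exactly at $w=\tilde{q}$, the $(4,3)$ entry exactly at $w=1$, and the entries coming from $T^{(1)},T^{(3)}$ are nonvanishing constants on $U_\infty$, because their torsion sits at $w=\infty$. After rescaling the basis, the subdiagonal becomes $(1,1,-w+1,1,w-\tilde{q})$.

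Next I will bound the degrees. The gluing $e_6=w\tilde{e}_6$ with $E\cong\mcO(1)\oplus\mcO^{\oplus5}$, together with $\deg L=1$, gives the shape: entries $b_{ij}(w)$ are at most linear, except that the first row (the $\mcO(1)$-direction) allows quadratic entries. Then I will use the automorphisms of $E$ preserving the filtration, which are upper triangular with respect to $\tilde{e}_6,\ldots,\tilde{e}_1$, with the $\mcO(1)$-column allowed a linear entry. These should let me normalize as follows: kill $b_{11}$ and $b_{2l}$ for $l>2$; make $b_{55},b_{66}$ equal to $-\tfrac{\nu_0}{3}w-\tilde{p}$ and $\tilde{p}$, where $\tilde{p}$ is identified with the dual parameter via the map $h$ of Section \ref{dualparasec} (the $(6,6)$ entry at $w=\tilde{q}$ computes $h$); and make the $(5,6)$ entry proportional to $w$.

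I will then impose the residue conditions. The condition at $w=0$ (i.e. $z=\infty$, exponents $\nu^{(2)}$, all simple) gives no flag constraint beyond the characteristic polynomial. At $w=\infty$, where $z=0$ and the multiplicities are $3^2$, the flag condition $(\res\nabla)(l^{(0)}_i)\subset l^{(0)}_{i+1}$, read through the leading coefficients in $w$, forces the block structure seen in the $w$-terms. It pins $(5,6)$ to $-\nu^{(0)}_0\nu^{(0)}_1w$ and makes the diagonal coefficients $-(\tfrac23\nu_0+\tfrac{\nu_1}{2})$, $\tfrac13\nu_0+\tfrac12\nu_1$ and the $(2,2)$ entry $-(\tfrac{\nu_0}{3}+1)w-\nu_2$, with the trace fixed by the Fuchs relation. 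The condition at $w=1$ (multiplicities $2^3$) makes the third-row entries and $\lambda_1$ vanish appropriately at $w=1$, giving the terms $b_kw$ and $\lambda_1w$.

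Finally, I will expand the characteristic polynomials of $\res_{w=0}$ and $\res_{w=1}$. This yields the relations (\ref{lambdaE8}) and, since the remaining unknowns enter linearly after this triangular normalization, solves the $b_k$ uniquely. The main obstacle is showing that the gauge freedom is exactly used up, so that the form is unique, and verifying the $\lambda_1$ relation. For the latter I would first compute the $6\times6$ determinant at $w=1$ using the nilpotent-like block structure, and expect the extra $\nu^{(0)}_1\nu^{(0)}_2$ term to come from the $(5,6)$ entry.
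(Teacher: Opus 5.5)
Your outline (subdiagonal $(1,1,-w+1,1,w-\tilde{q})$ from the torsion sheaves $T^{(j)}$, degree bounds, filtration-preserving gauge, then residue conditions) matches the paper's. The final step fails, however: the characteristic polynomials of the residues at $w=0$ and $w=1$ cannot determine all the $b_k$.

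\begin{itemize}
\item At $w=0$, the entries $b_6w,b_7w,b_8w$ vanish, and the first row contributes only $b_1(0),\dots,b_4(0),\lambda_2$.
\item At $w=1$, the $(4,3)$ entry $-w+1$ vanishes. So $B(1)$ is block upper triangular with diagonal blocks on the indices $\{1,2,3\}$ and $\{4,5,6\}$. Its off-diagonal block has rows $(b_3(1),b_4(1),b_5)$, $(0,0,0)$, $(b_6,b_7,b_8)$, and a characteristic polynomial sees only the diagonal blocks.
\end{itemize}
Hence the six coefficients $b_3(1),b_4(1),b_5,b_6,b_7,b_8$ appear in neither characteristic polynomial. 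Neither the explicit form nor the uniqueness claim follows from your argument.

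What is missing is that, at the two points of non-simple spectral type, the parabolic condition is much stronger than the characteristic polynomial. For distinct exponents it says $(\res_0\nabla-\nu^{(0)}_1)(\res_0\nabla-\nu^{(0)}_2)=0$ and $\prod_{j=1}^{3}(\res_1\nabla-\nu^{(1)}_j)=0$ (the flag condition in general). Because the residues are block triangular, these relations constrain the off-diagonal blocks.

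\begin{itemize}
\item At $z=0$ (your $w=\infty$, after passing to the basis with $e_6=w\tilde{e}_6$), the residue is block upper triangular with $2\times 2$ blocks $B_{11},B_{22},B_{33}$. The characteristic polynomial of $B_{22}$ gives $b_6$. The off-diagonal part of the quadratic relation, acting on $B_{23}$, gives $b_7,b_8$ linearly in $b_9,\lambda_1$. This is exactly the paper's eigenvector argument, which lifts eigenvectors of $B_{33}$ using that $\nu^{(0)}_i-B_{22}$ has rank one.
\item At $z=1$, the off-diagonal part of the cubic relation fixes $b_3(1),b_4(1),b_5$ linearly in $b_6,b_7,b_8$. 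This matches the shape of the formulas for $b_{31},b_{41},b_5$ in Appendix A.
\end{itemize}

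Apart from the $w=0$ data, only diagonal-block data come from characteristic polynomials. These are $b_{1\infty}$ and $b_6$ at $z=0$, and $b_1(1),b_2(1),b_9,\lambda_1$ at $z=1$. The last of these gives the $\lambda_1$ relation through the $(5,6)$ entry, as you expected.

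Separately, the factors $w$ in $b_kw$ and $\lambda_1w$ record vanishing at $w=0$, not at $w=1$. You correctly note there is no flag constraint at $w=0$, so these factors are part of the gauge normalization, not a consequence of the condition at $w=1$.
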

\begin{proof}
By the same argument as in the proof of Proposition \label{AE8}, $B(w)$ can be written as the assertion. $b_6$ and $b_9, \lambda_1$ are determined by the eigenpolynomials of the block matrices of $\res_0\nabla$ and $\res_1\nabla$, respectively. We can determine $b_7$ and $b_8$ as follows. The residue endomorphism $\res_0\nabla$ is written as 
\[
\res_0\nabla=
\begin{pmatrix}
B_{11}&B_{12}&B_{13}\\O&B_{22}&B_{23}\\O&O&B_{33}
\end{pmatrix},
\]
where $B_{ij}$ is a $2\times 2$ matrix. Let $^t(u_{i1}, \cdots , u_{i6})$ be a column eigenvector of $\res_0\nabla$ for $\nu^{(0)}_i$. Then, $^t(u_{i5}, u_{i6})$ is a column eigenvector of $B_{33}$ for $\nu^{(0)}_i$. Moreover, we have $B_{23}{}^t(u_{i5}, u_{i6})=(\nu^{(0)}_i-B_{22}){}^t(u_{i5}, u_{i6})$. Since 
\[
\nu^{(0)}_i-B_{22}=
\begin{pmatrix}
\frac{1}{3}\nu_0+\frac{1}{2}\nu_1-\nu^{(0)}_i\\1
\end{pmatrix}
\begin{pmatrix}
-1&\frac{1}{3}\nu_0+\frac{1}{2}\nu_1+\nu^{(0)}_i
\end{pmatrix},
\]
we have
\[
\begin{pmatrix}
b_7&b_8
\end{pmatrix}
=
\begin{pmatrix}
u_{15}&u_{25}\\u_{16}&u_{26}
\end{pmatrix}
\begin{pmatrix}
	\frac{1}{3}\nu_0+\frac{1}{2}\nu_1-\nu^{(0)}_1&0\\0&\frac{1}{3}\nu_0+\frac{1}{2}\nu_1-\nu^{(0)}_2
\end{pmatrix}
\begin{pmatrix}
	u_{15}&u_{25}\\u_{16}&u_{26}
\end{pmatrix}^{-1}.
\]
In the same manner, we can determine other coefficients.
\end{proof}

\subsection{Moduli spaces of logarithmic connections with spectral type $(2^2, 1^4, 1^4)$, $(3^2, 2^3, 1^6)$}\label{modsec}
In this section, we show that moduli spaces of parabolic logarithmic connection with spectral type $(2^2, 1^4, 1^4), (3^2, 2^3, 1^6)$ and degree 1 are isomorphic to the surfaces obtained by blowing up $S_{\boldsymbol{\nu}}$ of $\mbP^2$ at suitable points without an effective divisor $D_{\boldsymbol{\nu}}$. Set 
\[
t_0=0, \quad t_1=1, \quad t_2=\infty.
\]
Let $\mcS \rightarrow \mbP^2 \times \mcN$ be the blow-up along the $s_0+s_1+s_2$ sections $p^{(i)}_j\times \id \colon \mcN \rightarrow \mbP^2 \times \mcN$ defined by
\begin{equation}\label{pij}
p^{(i)}_j=
\left\{
\begin{array}{ll}
	(0: -\nu^{(0)}_j:1)& i=0, j=1, \ldots, s_0\\(1: \nu^{(1)}_j:1)&  i=1, j=1, \ldots, s_1, \\(1: -\nu^{(2)}_j:0)& i=2, j=1, \ldots, s_2.
\end{array}\right.	
\end{equation}
Let $\mcD \subset \mcS$ be the strict transform of $\{x_0(x_0-x_2)x_2=0\}\times \mcN$.
Let $S_{\boldsymbol{\nu}}$ and $D_{\boldsymbol{\nu}}$ be the fibers of $\mcS$ and $\mcD$ at $\boldsymbol{\nu} \in \mcN$.

\begin{theorem}\label{MT1} 
Assume that $0<\alpha^{(i)}_j\ll 1$ for all $i,j$. Then, the morphism $\psi\colon \mcM\rightarrow \mbP^2$ induces an isomorphism $\mcM\rightarrow \mcS\setminus \mcD$ over $\mcN$.
\end{theorem}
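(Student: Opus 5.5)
We will lift $\psi\colon\mcM\to\mbP^2\times\mcN$ to the blow-up $\mcS$ and then show that the lift is an isomorphism onto $\mcS\setminus\mcD$. The main tools are the normal forms of Propositions \ref{AE7}, \ref{AE8} and their $B(w)$-counterparts. These give explicit coordinates on two open subsets of $\mcM$: $\{\App\neq\infty\}$, with coordinates $(q,p,\mu_0,\mu_1)$, and $\{\App\neq 0\}$, with coordinates $(\tilde q,\tilde p,\lambda_1,\lambda_2)$. Together the two sets cover $\mcM$.

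\textbf{Step 1: the lift over $\{\App\neq\infty\}$.} By Proposition \ref{AE7} (resp. \ref{AE8}), a connection in this set is determined by $(q,p,\mu_0,\mu_1)$ subject to the relations (\ref{muE7}) (resp. (\ref{muE8})). All other entries $a_{kl}$ (resp. $a_i$, $a_1(z)$, $a_2(z)$) are regular functions of these and $\boldsymbol{\nu}$. Conversely, any such tuple gives a $\boldsymbol{\nu}$-parabolic connection on $\mcO(1)\oplus\mcO^{\oplus r-1}$, with the parabolic structure read off from the triangular shape of the residues.

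The relation $q\mu_0=(p+\nu^{(0)}_1)(p+\nu^{(0)}_2)$ describes the affine chart of the blow-up of $\mbC^2_{(q,p)}$ at the points $(0,-\nu^{(0)}_j)$, with the strict transform of $\{q=0\}$ removed. The chart coordinate is $\mu_0$, and $\mu_0=\infty$ is exactly the excluded part. When $\nu^{(0)}_1=\nu^{(0)}_2$ the points are infinitely near and the same equation still gives the correct chart. The same holds for $\mu_1$ at $q=1$, with the $s_1$ points $(1,\nu^{(1)}_j)$. This is the content of Remark \ref{blowrem}. Hence $(q,p,\mu_0,\mu_1)$ identifies $\{\App\neq\infty\}$ with $(\mcS\setminus\mcD)\cap\{x_2\neq0\}$.

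\textbf{Step 2: the lift over $\{\App\neq 0\}$.} We repeat Step 1 using $B(w)$ and the relations (\ref{lambdaE7}) (resp. (\ref{lambdaE8})). The $\lambda_2$ relation is the blow-up at the $s_2$ points $\tilde q=0$, $\tilde p=-\nu^{(2)}_j$. In the coordinates $(1:\tilde p:\tilde q)$ these are the points $p^{(2)}_j$. The $\lambda_1$ relation gives the blow-up over $\tilde q=1$. Its right-hand side vanishes mod $(\tilde q-1)$ exactly at $\tilde p=\nu^{(1)}_j$, so it is the same blow-up as in Step 1, written in another coordinate.

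\textbf{Step 3: gluing.} We check that on the overlap the two charts agree as charts of $\mcS$. Concretely, the gauge change $e_r=w\tilde e_r$ followed by renormalization takes $(q,p,\mu_0,\mu_1)$ to $(\tilde q,\tilde p,\lambda_1,\lambda_2)$ via $\tilde q=q^{-1}$, $\tilde p=q^{-1}p$, with $\lambda_1,\lambda_2$ regular in the old coordinates and vice versa. Compatibility of $\tilde p=q^{-1}p$ with the $\mbP^2$ coordinates follows from the construction of $\psi$ in (\ref{mtop2}). Together with Steps 1 and 2, this gives a morphism $\tilde\psi\colon\mcM\to\mcS\setminus\mcD$ over $\mcN$ with an inverse on each chart. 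Hence $\tilde\psi$ is bijective on points, and in fact an isomorphism, since both sides are smooth by Theorem \ref{modpc} and the charts give inverse morphisms in families over $\mcN$. This requires the normal form to be constructed in families, which follows from uniqueness of the filtration.

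\textbf{Step 4: surjectivity and the main obstacle.} We must show that every point of $\mcS\setminus\mcD$ is hit. That is, for each tuple satisfying the relations, the connection built from the normal form must be $\boldsymbol{\alpha}$-stable for $0<\alpha^{(i)}_j\ll1$. Since all weights are small, stability reduces to the following: no $\nabla$-invariant subbundle $F$ has $\deg F/\rank F>1/r$, and the boundary cases are handled by the weights.

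The check runs as follows. The subdiagonal entries $1$, $z$, $z-q$ (and $z-1$, $z$ for $E^{(1)}_8$) make the cyclic structure from $\mcO(1)=\langle e_r\rangle$ nondegenerate. An invariant subbundle of degree $\ge1$ must contain $\mcO(1)$, hence all of $E$, since the chain is generated from $e_r$. Degree-$0$ invariant subbundles must be ruled out by the eigenvalue conditions together with the weight inequality; one argues that the Fuchs relation with $\deg E=1$ forces a contradiction for subbundles of degree $\le 0$ only up to the $\alpha$-correction.

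This stability verification, especially at the points of the exceptional curves and for nongeneric $\boldsymbol{\nu}$ with reducible residues, is the step I expect to be hardest. I would try first to show that any invariant $F\subsetneq E$ has degree $\le0$, so that its slope satisfies $\deg F/\rank F\le0<1/r$, which gives stability for small $\alpha$ directly.
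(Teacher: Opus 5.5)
Your route is the paper's. The normal forms make $\varphi$ an isomorphism off the fibres over $0,1,\infty$, the relations (\ref{muE7})--(\ref{lambdaE8}) are read as blow-up charts, and the two charts are glued. The paper phrases this as lifting $\varphi$ to the blow-up $\widetilde{\mbL}$ of $\mbL\times\mcN$ along the sections (\ref{lsec}) and then contracting to $\mbP^2$.

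Your Step 4 is fine and makes explicit what the paper leaves implicit. A positive-degree subbundle of $\mcO(1)\oplus\mcO^{\oplus r-1}$ contains the $\mcO(1)$ summand, and the subdiagonal entries then force $F=E$. Subbundles of degree $\le 0$ never destabilize when $\alpha^{(i)}_j\ll 1$. So, contrary to your middle sentence, nothing about degree-$0$ subbundles has to be ruled out, and every normal form is stable for every admissible parabolic structure.

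The genuine gap is in Step 1 for non-generic $\boldsymbol{\nu}$. Your claim that for $\nu^{(0)}_1=\nu^{(0)}_2=\nu$ ``the same equation still gives the correct chart'' is false.
\begin{itemize}
\item The surface $q\mu_0=(p+\nu)^2$ is a quadric cone with an $A_1$ singularity at $(q,p,\mu_0)=(0,-\nu,0)$. By contrast, $S_{\boldsymbol{\nu}}\setminus D_{\boldsymbol{\nu}}$ is smooth and contains the $(-2)$-curve of class $\mcE^{(0)}_1-\mcE^{(0)}_2$, which the cone contracts.
\item In the family, the hypersurface $q\mu_0=(p+\nu^{(0)}_1)(p+\nu^{(0)}_2)$ over $\mcN$ is singular (conifold-type) along $\nu^{(0)}_1=\nu^{(0)}_2$, $q=\mu_0=0$, $p=-\nu^{(0)}_1$. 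Yet $\mcM$ and $\mcS$ are both smooth there.
\item On the connection side, at that point the two $2\times 2$ blocks of $\res_0\nabla$ in the normal form are a Jordan block and $\nu I$. Hence $\rank(\res_0\nabla-\nu)=1$.
\item The admissible $l^{(0)}_1$ are the $2$-planes between $\Image(\res_0\nabla-\nu)$ and $\Ker(\res_0\nabla-\nu)$. They form a $\mbP^1$, and all of them are stable by your Step 4.
\end{itemize}
So ``the parabolic structure is read off from the triangular shape'' fails, $(q,p,\mu_0,\mu_1)$ are not coordinates on $\mcM$ there, and your chart-wise inverse does not exist.

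The same happens at $t_1$ and $t_2$ whenever exponents coincide. For example, take $\nu^{(1)}_1=\nu^{(1)}_2$ at $q=1$, $p=\nu^{(1)}_1$, $\mu_1=0$. There $A(1)$ splits as a cyclic $3\times 3$ block plus $(p)$. So $\res_1\nabla$ has a $2$-dimensional $\nu^{(1)}_1$-eigenspace, and the flag $l^{(1)}_*$ has a $\mbP^1$ of choices.

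To close the gap, you can map to $\mcS$ itself (the iterated blow-up) rather than to these hypersurfaces. Use $l_*$ as the extra coordinate over these points, and check that the $\mbP^1$ of parabolic structures maps isomorphically onto the $(-2)$-curve. Alternatively, show that $\psi$ lifts to a morphism $\mcM\to\mcS\setminus\mcD$ that is bijective over the coincident locus. Then Zariski's main theorem applies, since the map is birational and both sides are smooth.
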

\begin{proof}
Since the map
\[
\mcM\setminus \cup_{i=0}^2\App^{-1}(t_i) \longrightarrow \mbL\setminus \cup_{i=0}^2\pi^{-1}(t_i)\times \mcN
\]
induced by $\varphi$ is an isomorphism by Proposition \ref{AE7}, \ref{AE8}, $\varphi$ is a birational morphism. We consider the fiber of $\App$ and $\pi$ at $t_i$. We have 
\begin{equation}\label{lsec}
\varphi(M^{\boldsymbol{\alpha}}(\boldsymbol{\nu})) \cap 
\pi^{-1}(t_i)=
\left\{
\begin{array}{ll}
	\left\{\left(0 (=q), -\nu^{(0)}_j\tfrac{dz}{z(z-1)}\right)\right\}_{j=1}^{s_0} &i=0\\
	\left\{\left(1, \nu^{(1)}_j\tfrac{dz}{z(z-1)}\right)\right\}_{j=1}^{s_1}&  i=1 \\
	\left\{\left(0 (=\tilde{q}), -\nu^{(2)}_j\tfrac{dw}{w(w-1)}\right)\right\}_{j=1}^{s_2}& i=2
\end{array}\right.
\end{equation}
by (\ref{muE7}), (\ref{lambdaE7}), (\ref{muE8}), and (\ref{lambdaE8}). Let $\widetilde{\mbL}$ be the blow up of $\mbL\times \mcN$ along sections defined by (\ref{lsec}) and $f_i$ be the strict transform of $\pi^{-1}(t_i)\times \mcN$. Then, from (\ref{muE7}), (\ref{lambdaE7}), (\ref{muE8}), (\ref{lambdaE8}), and Remark \ref{blowrem}, we can see that the morphism $\varphi$ lifts the morphism $\tilde{\varphi} \colon \mcM\rightarrow \widetilde{\mbL}$ and we have an isomorphism $\mcM\rightarrow \widetilde{\mbL}\setminus \left(f_0 \cup f_1 \cup f_2\right)$. Since the contraction $\mbL\rightarrow \mbP^2$ induces an isomorphism $\widetilde{\mbL}\setminus \left(f_0 \cup f_1 \cup f_2\right) \rightarrow \mcS\setminus \mcD$, we have the assertion.
\end{proof}
\subsection{The space of initial conditions of the difference Painlev\'e equation with symmetry of type $E^{(1)}_7$}\label{E7sec}
In this section, we consider the relation of $S_{\boldsymbol{\nu}}$ for $\bm{r}=(2^2, 1^4, 1^4)$ and the space of initial conditions of the difference Painlev\'e equation with symmetry of type $E^{(1)}_7$.

Let $D_0\subset S_{\boldsymbol{\nu}}$ be the strict transform of $\{x_0=0\}$. In the case where $\bm{r}=(2^2, 1^4, 1^4)$, the divisor $D_0$ is a $(-1)$-curve since $s_{0}=2$. We consider the contraction of $D_0$. The Picard group of $S_{\boldsymbol{\nu}}$ has the decomposition $\Pic(S_{\boldsymbol{\nu}})=\mbZ H\oplus \oplus_{i=0}^2\oplus_{j=1}^{s_i}\mbZ\mcE^{(i)}_j$, where $H$ is a line of $\mbP^2$ and $\mcE^{(i)}_j$ is the exceptional curve at $p^{(i)}_j$. If the morphism induced by the divisor $\mcE$ contracts $D_0$, then the intersection number $\mcE. D_0$ must be zero. Let $\mcE=2H-\mcE^{(0)}_1-\mcE^{(0)}_2$. Since $D_0=H-\mcE^{(0)}_1-\mcE^{(0)}_2$, we have $\mcE. D_0=0$. Global sections of $\mcO_{S_{\boldsymbol{\nu}}}(\mcE)$ correspond to quadratic polynomials vanishing at $p^{(0)}_1$  and $p^{(0)}_2$. Hence, the cohomology $H^0(\mcO_{S_{\boldsymbol{\nu}}}(\mcE))$ is generated by
\[
x_0^2, \ x_0(x_1+\nu^{(0)}_1x_2), \ x_0(x_1+\nu^{(0)}_2x_2), \ (x_1+\nu^{(0)}_1x_2)(x_1+\nu^{(0)}_2x_2).
\]
In particular, $\mcO_{S_{\boldsymbol{\nu}}}(\mcE)$ defines a morphism $S_{\boldsymbol{\nu}} \to \mbP^3$, which factors the morphism
\begin{equation}\label{sxmor}
S_{\boldsymbol{\nu}} \longrightarrow \mbP^1\times \mbP^1, \quad (x_0:x_1:x_2)\longmapsto \left((x_0:-(x_1+\nu^{(0)}_1x_2)), (x_0: x_1+\nu^{(0)}_2x_2)\right)
\end{equation}
and the Segre embedding $\mbP^1\times \mbP^1 \rightarrow \mbP^3$. 
Let $\mcX\rightarrow \mbP^1\times \mbP^1\times \mcN$ be the blow-up at the following sections $P_i \times \id \colon \mcN \rightarrow \pl\times \pl \times \mcN$;
\begin{equation}\label{E7pt}
P_i=\left\{
\begin{array}{ll}
	((1:-(\nu^{(1)}_{i}+\nu^{(0)}_1)), (1:\nu^{(1)}_{i}+\nu^{(0)}_2))& i=1, \ldots, 4 \\
	((1: \nu^{(2)}_{i}), (1:-\nu^{(2)}_{i}))& i=5, \ldots, 8. \\
\end{array}
\right.
\end{equation}
Let $\mcY_1, \mcY_2 \subset \mcX$ be the strict transforms of
\[
\left\{(y_1+\nu^{(0)}_0y_0)z_0+(z_1-\nu^{(0)}_1z_0)y_0=0\right\}, \quad \left\{y_1z_0+z_1y_0=0\right\}, 
\] 
on $\pl_{(y_0:y_1)} \times \pl_{(z_0:z_1)}\times \mcN$, respectively.
Then, the morphism (\ref{sxmor}) induces an isomorphism $\mcS\rightarrow \mcX$. In particular, we have an isomorphism $\mcS\setminus \mcD\rightarrow \mcX\setminus \left(\mcY_1\cup \mcY_2\right)$. The fiber $X_{\boldsymbol{\nu}}$ of $\mcX \rightarrow \mcN$ at $\boldsymbol{\nu}$ is the space of initial conditions for the difference Painlev\'e equations of type $E^{(1)}_7$ due to Kajiwara, Noumi, and Yamada \cite{KNY}, and $(\mcY_1)_{\boldsymbol{\nu}}+ (\mcY_2)_{\boldsymbol{\nu}}$ is the anti-canonical divisor of $X_{\boldsymbol{\nu}}$. Therefore, we obtain the following. 

\begin{theorem}\label{icsE7}
When $\bm{r}=(2^2, 1^4, 1^4)$, the family $\mcM$ of the moduli spaces is isomorphic to the family $\mcX\setminus \left(\mcY_1\cup \mcY_2\right)$ of the spaces of initial conditions for the difference Painlev\'e equations of type $E^{(1)}_7$ without the effective anti-canonical divisors.
\end{theorem}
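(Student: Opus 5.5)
Theorem \ref{icsE7} should follow by composing Theorem \ref{MT1} with an isomorphism $\mcS\to\mcX$ over $\mcN$ that carries $\mcD$ onto $\mcY_1\cup\mcY_2$. Theorem \ref{MT1} already gives $\mcM\cong\mcS\setminus\mcD$ for $0<\alpha^{(i)}_j\ll1$. So what remains is to show that the morphism (\ref{sxmor}) induces an isomorphism $\mcS\to\mcX$ with $\mcD\mapsto\mcY_1\cup\mcY_2$. We also need $X_{\boldsymbol{\nu}}$ to be the $E^{(1)}_7$ surface of \cite{KNY}, with anti-canonical divisor $(\mcY_1)_{\boldsymbol{\nu}}+(\mcY_2)_{\boldsymbol{\nu}}$; this is a comparison with the standard parametrization of the eight points on $\pl\times\pl$ there.

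I would argue with the Picard lattice. Take the classes $\mcE_1=H-\mcE^{(0)}_1$ and $\mcE_2=H-\mcE^{(0)}_2$ on $S_{\boldsymbol{\nu}}$. These are the two pencils of lines through $p^{(0)}_1$ and through $p^{(0)}_2$, and they give exactly the two factors of (\ref{sxmor}).

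\begin{itemize}
\item We have $\mcE_1^2=\mcE_2^2=0$, $\mcE_1.\mcE_2=1$ and $\mcE_k.D_0=0$. Hence $(\pi_1,\pi_2)$ contracts precisely the $(-1)$-curve $D_0=H-\mcE^{(0)}_1-\mcE^{(0)}_2$.
\item This is the classical identification of $\mbP^2$ blown up at two points with $\pl\times\pl$ blown up at one point. In that identification, $\mcE^{(0)}_1$ and $\mcE^{(0)}_2$ become the rulings $\{z_0=0\}$ and $\{y_0=0\}$ through the image point of $D_0$.
\end{itemize}

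The remaining eight exceptional curves $\mcE^{(1)}_j$ and $\mcE^{(2)}_j$ map isomorphically onto the exceptional curves over the images of the points $p^{(1)}_j$ and $p^{(2)}_j$. Computing those images with the formula (\ref{sxmor}) is a direct substitution:
\begin{itemize}
\item $(1:\nu^{(1)}_i:1)$ maps to $\bigl((1:-(\nu^{(1)}_i+\nu^{(0)}_1)),(1:\nu^{(1)}_i+\nu^{(0)}_2)\bigr)$;
\item the points on $x_2=0$ map to $\bigl((1:\nu^{(2)}_i),(1:-\nu^{(2)}_i)\bigr)$, after matching the sign convention for $p^{(2)}_j$.
\end{itemize}
This recovers (\ref{E7pt}). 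The construction works fibrewise and is compatible with blow-ups along sections, so it globalizes over $\mcN$. Hence the universal property of blowing up gives the isomorphism $\mcS\cong\mcX$.

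Next I track $\mcD$. Its component $D_0$ is contracted, and the other components are the strict transforms of the lines $x_0=x_2$ and $x_2=0$. Each of these lines meets neither $p^{(0)}_1$ nor $p^{(0)}_2$, so its class is $H$, and $H=\mcE_1+\mcE_2-(\text{exceptional of }D_0)$ has bidegree $(1,1)$. Their images are therefore $(1,1)$-curves passing through the contracted point, and substituting the line equations into (\ref{sxmor}) should give exactly the curves defining $\mcY_1$ and $\mcY_2$.

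The main obstacle I expect is verifying that these two $(1,1)$-curves pass through the eight points $P_i$ as required. We also need the sum $(\mcY_1)_{\boldsymbol{\nu}}+(\mcY_2)_{\boldsymbol{\nu}}$ to have class $-K=2\mcE_1+2\mcE_2-\sum_i E_{P_i}$, and to be of type $A^{(1)*}_1$, meaning two rational curves tangent at one point; the tangency should occur at the image of $D_0$. This is a short explicit check. The final step, identification with \cite{KNY}, only needs the parameters matched after an affine change on each $\pl$ factor.
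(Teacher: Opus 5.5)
This is essentially the paper's argument. The paper also composes Theorem \ref{MT1} with the isomorphism $\mcS\to\mcX$ induced by (\ref{sxmor}), obtained from $|2H-\mcE^{(0)}_1-\mcE^{(0)}_2|$ followed by the Segre embedding, which is exactly your pair of pencils $|H-\mcE^{(0)}_1|$, $|H-\mcE^{(0)}_2|$; it likewise contracts $D_0$ and sends $p^{(1)}_j,p^{(2)}_j$ to the $P_i$ of (\ref{E7pt}) and the lines $x_0=x_2$, $x_2=0$ to the curves defining $\mcY_1,\mcY_2$.

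One caveat applies to your argument and the paper's alike: identifying $\mbP^2$ blown up at $p^{(0)}_1,p^{(0)}_2$ with $\pl\times\pl$ blown up at one point requires $\nu^{(0)}_1\neq\nu^{(0)}_2$. On the locus $\nu^{(0)}_1=\nu^{(0)}_2$ the two pencils coincide and (\ref{sxmor}) is no longer birational. There, contracting $D_0$ gives $\mathbb{F}_2$, with $\mcE^{(0)}_1-\mcE^{(0)}_2$ an effective $(-2)$-curve, so that locus must be excluded or treated separately.
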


\begin{figure}
	\centering\includegraphics[width=10cm]{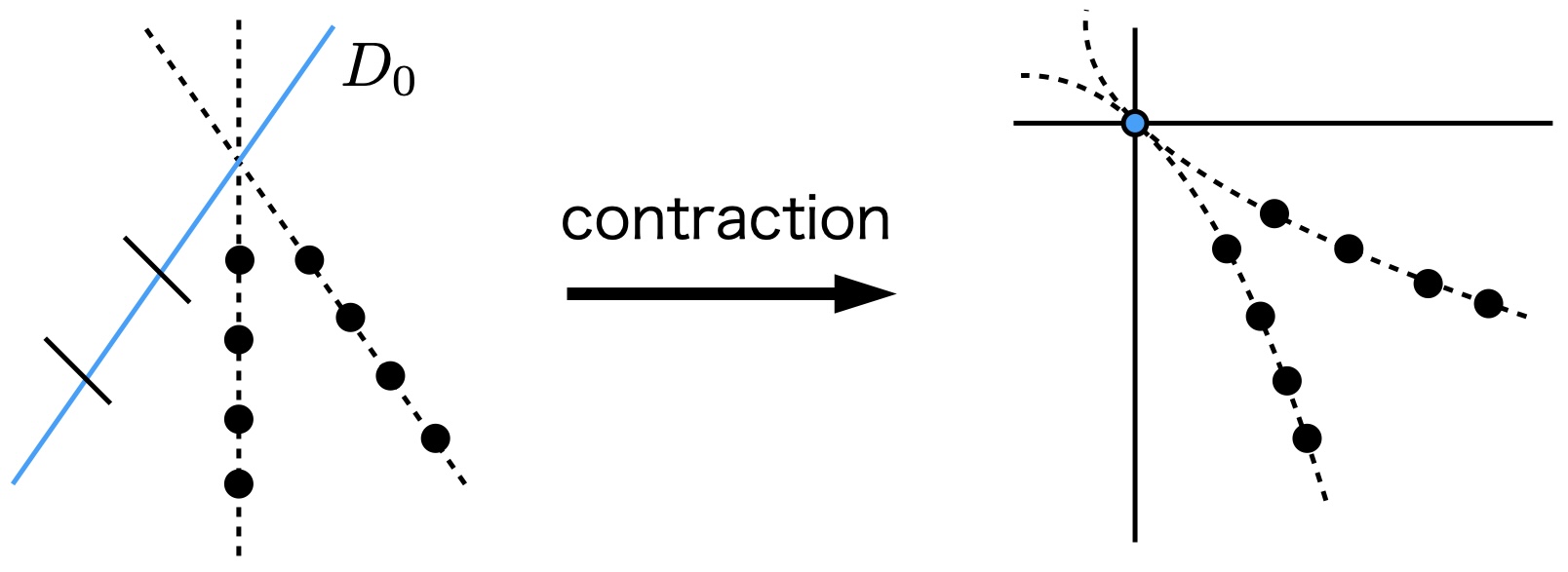}
\end{figure}

\subsection{The space of initial conditions of the difference Painlev\'e equation with symmetry of type $E^{(1)}_8$}\label{E8sec}
In this section, we consider the relation of $S_{\boldsymbol{\nu}}$ for $\bm{r}=(3^2, 2^3, 1^6)$ and the space of initial conditions of the difference Painlev\'e equation with symmetry of type $E^{(1)}_8$.

Since $D_0$ is a $(-1)$-curve, we obtain a smooth surface $Z_{\boldsymbol{\nu}}$ by contracting $D_0$. As in the $E^{(1)}_7$-case, the surface $Z_{\boldsymbol{\nu}}$ is obtained by blowing-up $\mbP^1\times \mbP^1$ at the following nine points;
\[
q_i=\left\{
\begin{array}{ll}
	((1:-(\nu^{(1)}_i+\nu^{(0)}_1)), (1:\nu^{(1)}_i+\nu^{(0)}_2))& i=1, 2, 3 \\
	((1: \nu^{(2)}_{i-3}), (1:-\nu^{(2)}_{i-3}))& i=4,\ldots, 9  
\end{array}
\right.
\]
The morphism $S_{\boldsymbol{\nu}}\rightarrow Z_{\boldsymbol{\nu}}$ is obtained by the rational map $\mbP^2\rightarrow \mbP^1_{(y_0:y_1)}\times \mbP^1_{(z_0:z_1)}$ defined by 
\[
(x_0:x_1:x_2)\longmapsto ((x_0:-(x_1+\nu^{(0)}_1x_2)), (x_0: x_1+\nu^{(0)}_2x_2)).
\]
Let $D_1 \subset S_{\boldsymbol{\nu}}$ be the strict transform of $\{x_0=x_2\} \subset \mbP^2$. Since $D_1$ is a $(-2)$-curve and intersects with $D_0$ at a point, the image $Y_1$ of $D_1$ by $S_{\boldsymbol{\nu}}\rightarrow Z_{\boldsymbol{\nu}}$ is a $(-1)$-curve. We consider the contraction of  $Y_1$. Let $H_1=\mbP^1\times \{\textrm{pt}\}, H_2=\{\textrm{pt}\} \times \mbP^1$ and $\mcF_i$ be the exceptional curve at $q_i$ for $i=1, \ldots, 9$. Then, the Picard group $\Pic (Z_{\boldsymbol{\nu}})$ has the decomposition $\Pic (Z_{\boldsymbol{\nu}})\cong \mbZ H_1\oplus \mbZ H_2 \oplus \bigoplus_{i=1}^9 \mbZ \mcF_i$. Let $\mcD=H_1+2H_2-\mcF_1-\mcF_2-\mcF_3$. Since $Y_1=H_1+H_2-\mcF_1-\mcF_2-\mcF_3$, we have $\mcD. Y_1=0$. Global sections of $\mcO_{Z_{\boldsymbol{\nu}}}(\mcD)$ correspond to $(2, 1
)$ polynomials vanishing at $q_1, q_2, q_3$. Hence, the cohomology $H^0(\mcO_{Z_{\boldsymbol{\nu}}}(\mcD))$ is generated by
\begin{align*}
h_1=&y_0(y_0z_1+y_1z_0+(\nu^{(0)}_1-\nu^{(0)}_2)y_0z_0)\\
h_2=&y_1(y_0z_1+y_1z_0+(\nu^{(0)}_1-\nu^{(0)}_2)y_0z_0)\\
h_3=&(\nu^{(0)}_1+\nu^{(1)}_1)(\nu^{(0)}_1+\nu^{(1)}_2)(\nu^{(0)}_1+\nu^{(1)}_3)y^2_0z_0-y^2_1z_1\\
&+\kappa y_1^2z_0+(3(\nu^{(0)}_2)^1+\nu_1\nu^{(0)}_1+\nu^{(1)}_1\nu^{(1)}_2+\nu^{(1)}_2\nu^{(1)}_3+\nu^{(1)}_3\nu^{(1)}_1)y_0y_1z_0,
\end{align*}
where $\kappa=\tfrac{1}{6}(4\nu^{(0)}_1+2\nu^{(0)}_2+\nu_1)$. 

Let $\mcX\rightarrow \mbP^2\times \mcN$ be the blow-up at the following sections $P_i \times \id \colon \mcN \rightarrow \mbP^2 \times \mcN$;
\[
P_i=\left\{
\begin{array}{ll}
\left(-3(\nu^{(0)}_1+\nu^{(1)}_{i})+3\kappa: -2: (-3(\nu^{(0)}_1+\nu^{(1)}_{i})+3\kappa)^3\right)& i=1, 2, 3 \\
\left(3\nu^{(2)}_{i-3}+3\kappa:-2: (3\nu^{(2)}_{i-3}+3\kappa)^3\right)& i=4,\ldots, 9  
\end{array}
\right.
\]
Then, the rational map $\mbP^1\times \mbP^1\rightarrow \mbP^2$ defined by
\begin{equation}\label{zxmor}
\left((y_0:y_1), (z_0:z_1)\right)\longmapsto (3\kappa h_1+3h_2:-2h_1: \kappa_1h_1+\kappa_2h_2+27(\nu^{(0)}_1-\nu^{(0)}_2)h_3),
\end{equation}
induces the morphism $Z_{\boldsymbol{\nu}}\rightarrow X_{\boldsymbol{\nu}}$. Here, 
\begin{align*}
\kappa_1=&\kappa^3-(\nu^{(0)}_1+\nu^{(1)}_1)(\nu^{(0)}_1+\nu^{(1)}_2)(\nu^{(0)}_1+\nu^{(1)}_3)\\
\kappa_2=&-\tfrac{1}{3}(6\kappa+\nu^{(0)}_1-\nu^{(0)}_2)(\nu^{(0)}_2-\nu^{(0)}_1)\\
&+\tfrac{1}{3}\left((\nu^{(1)}_1)^2+(\nu^{(1)}_2)^2+(\nu^{(1)}_3)^2-\nu^{(1)}_1\nu^{(1)}_2-\nu^{(1)}_2\nu^{(1)}_3-\nu^{(1)}_3\nu^{(1)}_1\right).
\end{align*} 
We note that the image $H_2-\mcF_i$ is $P_i$ for $i=1,2,3$. Let $\mcY\subset \mcX$ be the strict transform of the curve $\{w_1^2w_2=4w_0^3\}\subset \mbP^2_{(w_1: w_2: w_3)}\times \mcN$. Then, we obtain the isomorphism $\mcS\setminus \mcD\rightarrow \mcX\setminus \mcY$ by (\ref{sxmor}) and (\ref{zxmor}). The surface $X_{\boldsymbol{\nu}}$ is the space of initial conditions for the difference Painlev\'e equations of type $E^{(1)}_8$ due to Sakai \cite{Sa1}, and $\mcY_{\boldsymbol{\nu}}$ is the anti-canonical divisor of $X_{\boldsymbol{\nu}}$.  Therefore, we obtain the following. 

\begin{theorem}\label{icsE8}
When $\bm{r}=(3^2, 2^3, 1^6)$, the family $\mcM$ of the moduli spaces is isomorphic to the family $\mcX\setminus \mcY$ of the spaces of initial conditions for the difference Painlev\'e equations of type $E^{(1)}_8$ without the effective anti-canonical divisors.
\end{theorem}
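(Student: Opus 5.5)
The plan is to obtain Theorem \ref{icsE8} as the composition of two isomorphisms: the isomorphism $\mcM\cong\mcS\setminus\mcD$ of Theorem \ref{MT1}, and an isomorphism $\mcS\setminus\mcD\cong\mcX\setminus\mcY$ over $\mcN$. The second is induced by the birational map $\mbP^2\dashrightarrow\pl\times\pl\dashrightarrow\mbP^2$ given by (\ref{sxmor}) followed by (\ref{zxmor}). So the work lies in showing that this composite extends to a morphism $\mcS\to\mcX$ that contracts exactly the components of $\mcD$ onto $\mcY$, and is an isomorphism off them.

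\textbf{First step.} Treat the contraction $S_{\boldsymbol{\nu}}\to Z_{\boldsymbol{\nu}}$.
\begin{itemize}
\item Since $s_0=2$, $D_0=H-\mcE^{(0)}_1-\mcE^{(0)}_2$ is a $(-1)$-curve.
\item Exactly as in Section \ref{E7sec}, the linear system $|2H-\mcE^{(0)}_1-\mcE^{(0)}_2|$ is base point free and contracts only $D_0$. This gives $S_{\boldsymbol{\nu}}\to Z_{\boldsymbol{\nu}}$, a blow-up of $\pl\times\pl$.
\item I would check by direct substitution that the remaining nine blown-up points $p^{(1)}_j$ and $p^{(2)}_j$ map to $q_1,\dots,q_9$.
\item The strict transforms of $H-\mcE^{(0)}_k$ become the two rulings through the image point of $D_0$.
\end{itemize}

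\textbf{Second step.} Treat $Z_{\boldsymbol{\nu}}\to X_{\boldsymbol{\nu}}$.
\begin{itemize}
\item Compute $Y_1^2=(H_1+H_2)^2-3=-1$, so $Y_1$ is a $(-1)$-curve.
\item Verify that $\mcD'=H_1+2H_2-\mcF_1-\mcF_2-\mcF_3$ satisfies $\mcD'^2=1$ and $\mcD'.Y_1=0$. Hence $\phi_{|\mcD'|}\colon Z_{\boldsymbol{\nu}}\to\mbP^2$ is the blow-down of $Y_1$ composed with the contractions of $H_2-\mcF_i$ for $i=1,2,3$. Indeed, $H_2-\mcF_i$ has self-intersection $-1$ and $\mcD'.(H_2-\mcF_i)=0$.
\item Check that $h_1,h_2,h_3$ span $H^0(\mcO(\mcD'))$: this space has dimension $6-3=3$, and the three sections are linearly independent and vanish at $q_1,q_2,q_3$.
\item Show that the image of $H_2-\mcF_i$ under (\ref{zxmor}) is $P_i$ for $i=1,2,3$, and that $q_4,\dots,q_9$ map to $P_4,\dots,P_9$.
\end{itemize}
This realizes $S_{\boldsymbol{\nu}}$, after the appropriate identifications, as the blow-up of $\mbP^2$ at the nine points $P_i$, which lie on the cuspidal cubic $\{w_1^2w_2=4w_0^3\}$. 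The parametrization $(t:-2:t^3)$ shows they lie on this curve, and this is Sakai's $E^{(1)}_8$ surface \cite{Sa1}.

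\textbf{Third step.} Identify the anticanonical divisor.
\begin{itemize}
\item The class of $\mcY$ is $3H-\sum\mcE_{P_i}$. Pulling back, it equals $-K$, and its strict transform is the image of the component of $D_{\boldsymbol{\nu}}$ coming from $\{x_2=0\}$.
\item The other two components $D_0$ and $D_1$ of $D_{\boldsymbol{\nu}}$ are exactly the curves contracted to the cusp.
\item Since $-K_{S_{\boldsymbol{\nu}}}=D_0+D_1+D_2$ is the pullback of $-K_{X_{\boldsymbol{\nu}}}$, the complement $S_{\boldsymbol{\nu}}\setminus D_{\boldsymbol{\nu}}$ maps isomorphically onto $X_{\boldsymbol{\nu}}\setminus\mcY_{\boldsymbol{\nu}}$.
\item Everything is given by polynomial formulas in $\boldsymbol{\nu}$, so the construction works in the family over $\mcN$.
\item Composing with Theorem \ref{MT1} gives the claim.
\end{itemize}

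\textbf{Main obstacle.} I expect the hardest part to be the explicit verification in the second step: that (\ref{zxmor}) sends $q_4,\dots,q_9$ to $P_4,\dots,P_9$ and the curves $H_2-\mcF_i$ to $P_1,P_2,P_3$, with the correct constants $\kappa,\kappa_1,\kappa_2$. The choice of $\kappa$ is presumably made to translate the cubic into Weierstrass normal form. I would first check this on generic numerical $\boldsymbol{\nu}$ and then confirm symbolically.
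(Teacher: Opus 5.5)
Your proposal is the paper's own argument, written out in more detail: compose Theorem~\ref{MT1} with the isomorphism $\mcS\setminus\mcD\cong\mcX\setminus\mcY$ induced by (\ref{sxmor}) and (\ref{zxmor}), i.e.\ contract $D_0$ via (\ref{sxmor}) and then $Y_1$ via $|H_1+2H_2-\mcF_1-\mcF_2-\mcF_3|$ and (\ref{zxmor}), and your intersection numbers are correct. A few statements need correcting: for $\rho\colon S_{\boldsymbol\nu}\to X_{\boldsymbol\nu}$ one has $\rho^*\mcY_{\boldsymbol\nu}=3D_0+2D_1+D_2$, not $D_0+D_1+D_2=-K_{S_{\boldsymbol\nu}}$ (this is harmless, since you only need $\rho^{-1}(\mcY_{\boldsymbol\nu})=D_0\cup D_1\cup D_2$ together with $\rho$ being an isomorphism off $D_0\cup D_1$); it is $X_{\boldsymbol\nu}$ rather than $S_{\boldsymbol\nu}$ that is the blow-up of $\mbP^2$ at $P_1,\dots,P_9$, so $\phi_{|\mcD'|}$ also contracts $\mcF_4,\dots,\mcF_9$; and, a point the paper also passes over, the explicit formulas degenerate over $\nu^{(0)}_1=\nu^{(0)}_2$, where (\ref{sxmor}) is no longer dominant, so ``polynomial in $\boldsymbol\nu$'' does not by itself cover those fibers.
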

\begin{figure}
	\centering\includegraphics[width=10cm]{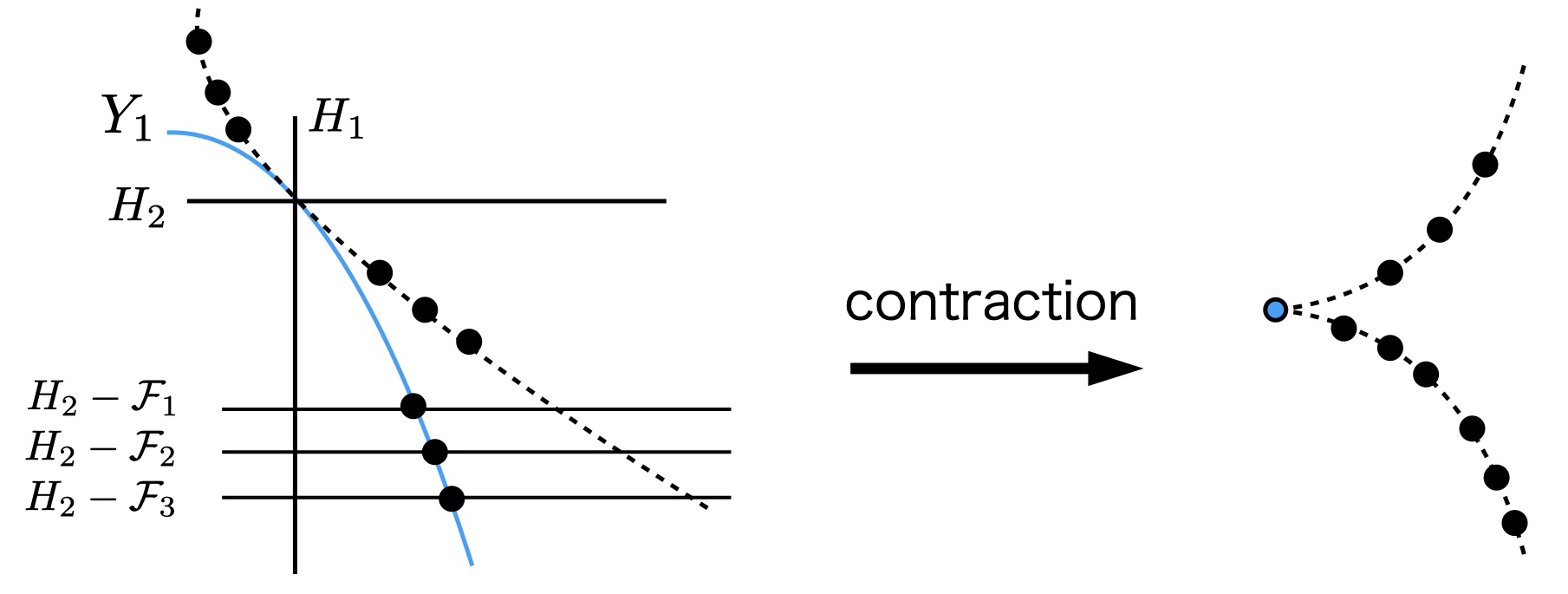}
\end{figure}

\section{Symmetry of the space of initial conditions}\label{symsec}
In this section, we explain how the extended affine Weyl group $\widetilde{W}(R^\perp)$ acts on the family $\mcS$ of $R$-surfaces.

\subsection{The extended affine Weyl group $\widetilde{W}(R^\perp)$}\label{E78}
The extended affine Weyl group $\widetilde{W}(R^\perp)$ is the semi-product  $\Aut(R^\perp) \ltimes W(R^\perp)$ of the automorphism group $\Aut(R^\perp)$ of the Dynkin diagram of type $R^\perp$ and the affine Weyl group $W(R^\perp)$. In the case of type $E^{(1)}_7, E^{(1)}_8$, $W(R^\perp)$ is defined as follows;
\[
W(R^\perp):=\left\langle  w_0, \ldots, w_l \vb
\begin{array}{cc}
	w_i^2=1& i=0, \cdots, l\\
	w_iw_j=w_jw_i&\text{$i$ and $j$ are not joined}\\
	w_iw_jw_i=w_jw_iw_j&\text{$i$ and $j$ are joined}
\end{array}
\right\rangle,
\]
where $i$ and $j$ mean the nodes of the Dynkin diagram.
\begin{equation}\label{dynkin}
\centering\includegraphics[width=10cm]{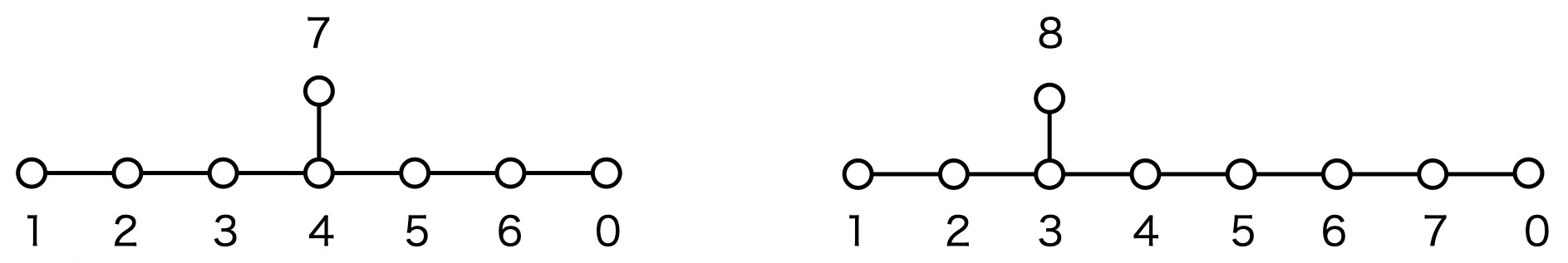}
\end{equation}
 The automorphism group $\Aut(E^{(1)}_7)$ of the Dynkin diagram of type $E^{(1)}_7$ is generated by $\sigma=(01)(26)(34)$. We note that $\Aut(A^{(1)}_2)$ is isomorphic to $\Aut(E^{(1)}_7)$. Since $\Aut(E^{(1)}_8)$ is trivial, the group $\widetilde{W}(E^{(1)}_8)$ coincides with $W(E^{(1)}_8)$.

\subsection{Action of $\widetilde{W}(R^\perp)$ on $\Pic (X_{\boldsymbol{\nu}})$}\label{actPicss}

In Sakai's theory, the Picard group $\Pic (X_{\boldsymbol{\nu}})$ has the root lattice of type $R^\perp$ as the sub-lattice, and the reflections define an action of $\tilde{W}=\widetilde{W}(R^\perp)$ on $\Pic (X)$. We see that in the case where $R^\perp=E^{(1)}_7$. 

The surface $X_{\boldsymbol{\nu}}$ is obtained by blowing up $\mbP^1 \times \mbP^1$ at the eight points $P_1, \ldots, P_8$ in (\ref{E7pt}). 
Let $H_1=\mbP^1\times \{\textrm{pt}\}, H_2=\{\textrm{pt}\} \times \mbP^1$ and $\mcF_i$ be the exceptional curve of $X$ at $P_i$ for $i=1, \ldots, 8$. Then, the Picard group $\Pic (X_{\boldsymbol{\nu}})$ has the decomposition $\Pic (X_{\boldsymbol{\nu}})\cong \mbZ H_1\oplus \mbZ H_2 \oplus \bigoplus_{i=1}^8 \mbZ \mcF_i$. Moreover, the intersection form is defined as follows;
\[
H_i^2=0, \quad \mcF_j^2=-1, \quad  H_1.H_2=1, \quad H_1. \mcF_j=H_2. \mcF_j=\mcF_i. \mcF_j=0\ (i \neq j).
\]
First, we consider the action of $W=W(E^{(1)}_7)$ on $\Pic (X_{\boldsymbol{\nu}})$. Set
\begin{equation}\begin{array}{llll}	
\alpha_0=\mcF_7-\mcF_8, & \alpha_1=\mcF_3-\mcF_4, & \alpha_2=\mcF_2-\mcF_3, & \alpha_3=\mcF_1-\mcF_2, \\
\alpha_4=H_2-\mcF_1-\mcF_5,& \alpha_5=\mcF_5-\mcF_6, & \alpha_6=\mcF_6-\mcF_7, & \alpha_7=H_1-H_2.
\end{array}
\end{equation}
The sublattice generated by $\alpha_0, \cdots, \alpha_7$ is isomorphic to the root lattice of type $E^{(1)}_7$. We denote it by $Q$, i.e., $Q:=\sum_{i=0}^7\mbZ \alpha_i$. The affine Weyl group $W$ is isomorphic to the group generated by the reflections $w_i$ of $\alpha_i$ on $Q$ for all $i$. The action of $W$ on $Q$ is naturally extended on $\Pic (X_{\boldsymbol{\nu}})$, that is, $W$ acts on $\Pic (X_{\boldsymbol{\nu}})$ by $w_i(C)=C+(C\cdot \alpha_i)\alpha_i$.

Next, we consider the action of $\Aut(E^{(1)}_7)$ on $\Pic (X_{\boldsymbol{\nu}})$. We denote the fibers $(\mcY_1)_{\boldsymbol{\nu}}$ and $(\mcY_2)_{\boldsymbol{\nu}}$ by $Y_1$ and $Y_2$, respectively. $Y_1, Y_2$ are written as elements in $\Pic (X_{\boldsymbol{\nu}})$ as follows;
\[
Y_{1}=H_1+H_2-\mcF_1-\mcF_2-\mcF_3-\mcF_4, \quad Y_{2}=H_1+H_2-\mcF_5-\mcF_6-\mcF_7-\mcF_8.
\]
The sub-lattice generated by $Y_1$ and $Y_2$ is isomorphic the root lattice of type $A^{(1)}_1$. We denote it by $Q^\perp$, i.e., $Q^\perp:=\mbZ Y_1+\mbZ Y_2$. We can see that $Q+Q^\perp=\Pic (X_{\boldsymbol{\nu}})$ and $Q \cap Q^\perp =\mbZ \delta$, where
\[
\delta=\alpha_1+2\alpha_2+3\alpha_3+4\alpha_4+3\alpha_5+2\alpha_6+\alpha_0+2\alpha_7=Y_{1}+Y_{2}.
\]
We set $\sigma(\alpha_j)=\alpha_{(01)(26)(35)(j)}$ (resp. $\sigma(Y_k)=Y_{(12)(k)}$). Then, $\sigma$ gives automorphisms of $Q$ and $Q^\perp$. This action extends to $Q+Q^\perp=\Pic (X_{\boldsymbol{\nu}})$, because it defines the same action on $Q \cap Q^\perp$. In fact, we have
\[
\sigma(\alpha_1)+2\sigma(\alpha_2)+3\sigma(\alpha_3)+4\sigma(\alpha_4)+3\sigma(\alpha_5)+2\sigma(\alpha_6)+\sigma(\alpha_0)+2\sigma(\alpha_7)=\sigma(Y_1)+\sigma(Y_2).
\]
 Therefore, $\sigma$ provides an automorphism of $\Pic (X_{\boldsymbol{\nu}})$. We find that these automorphisms define an action of $\Aut(E^{(1)}_7)$ on $\Pic (X_{\boldsymbol{\nu}})$ by the construction. In Appendix \ref{actpicX}, we give variations of the basis $H_1, H_2, \mcF_1, \ldots, \mcF_8$ by generators of $\widetilde{W}$. 

\subsection{Action of $\widetilde{W}(R^\perp)$ on $\Pic (S_{\boldsymbol{\nu}})$}
Since $S_{\boldsymbol{\nu}}$ is obtained by blowing up $X_{\boldsymbol{\nu}}$, the Picard group $\Pic (S_{\boldsymbol{\nu}})$ can be decomposed into $\Pic(X_{\boldsymbol{\nu}})$ and the group generated by the exceptional curves. In fact, we have $\Pic (S_{\boldsymbol{\nu}})=\Pic (X_{\boldsymbol{\nu}})\oplus L$, where
\[
L=
\left\{
\begin{array}{ll}
\mbZ D_0 &R^\perp=E^{(1)}_7, \\
\mbZ D_0 \oplus \mbZ D_1 &R^\perp=E^{(1)}_8. 
\end{array}\right.
\]
We define the action of $\widetilde{W}$ on $\Pic(S_{\boldsymbol{\nu}})$ by 
\[
w(B+C):=w(B)+C \quad (w \in \widetilde{W}, \ B \in \Pic(X_{\boldsymbol{\nu}}), \ C \in L).
\]
By the way, $\Pic (S_{\boldsymbol{\nu}})$ has the decomposition $\Pic (S_{\boldsymbol{\nu}})=\mbZ H \oplus \oplus_{i=0}^2\oplus_{j=1}^{s_i}\mbZ \mcE^{(i)}_j$. The relations of the bases of $\Pic (X_{\boldsymbol{\nu}})$ and $\Pic (S_{\boldsymbol{\nu}})$ is in Appendix \ref{picbasis}, and the explicit actions on $\Pic (S_{\boldsymbol{\nu}})$ is written in Appendix \ref{actpicS}.
\subsection{Action of $\widetilde{W}$ on $\mcN$}

The action of $\widetilde{W}$ on $\mcN$ comes from period integrals. We see that in the case where $R^\perp=E^{(1)}_7$.

 We consider a meromorphic two-form on $X_{\boldsymbol{\nu}}$ with a simple pole along the effective anti-canonical divisor $Y$. Since $H^0(\Omega_{X}^2(Y)) \cong H^0(\mcO_{X_{\boldsymbol{\nu}}})\cong \mathbb{C}$,  there is a unique meromorphic two-form $\omega$ in  $H^0(\Omega_{X_{\boldsymbol{\nu}}}^2(Y))$ with $\int_{\delta} \omega=1$. We define $\chi\colon Q\rightarrow \mathbb{C}$ by $\chi(C)=\int_{C}\omega$. By Lemma 21 in \cite{Sa1}, we have
\begin{equation}\label{int}
	\begin{array}{llll}
		\chi(\alpha_1)=\nu^{(1)}_4-\nu^{(1)}_3,  &\chi(\alpha_2)=\nu^{(1)}_3-\nu^{(1)}_2,  &\chi(\alpha_3)=\nu^{(1)}_2-\nu^{(1)}_1,&\chi(\alpha_4)=\gamma,  \\
		\chi(\alpha_5)=\nu^{(2)}_2-\nu^{(2)}_1,  &\chi(\alpha_6)=\nu^{(2)}_3-\nu^{(2)}_2, & \chi(\alpha_7)=\nu^{(0)}_2-\nu^{(0)}_1, &\chi(\alpha_0)=\nu^{(2)}_4-\nu^{(2)}_3,
	\end{array}	
\end{equation}
where $\gamma=\nu^{(0)}_1+\nu^{(1)}_1+\nu^{(2)}_1$. In particular, $\boldsymbol{\nu}$ is uniquely determined by the values $\chi(\alpha_0), \ldots, \chi(\alpha_7)$ and the condition
\begin{equation}\label{sumcond}
2\nu^{(0)}_{1}+2\nu^{(0)}_{2}=\nu_0, \quad \nu^{(i)}_{1}+\nu^{(i)}_{2}+\nu^{(i)}_{3}+\nu^{(i)}_{4}=\nu_i\ (i=1, 2).
\end{equation}
We define an action of $\widetilde{W}$ on $\mcN$ using this result, that is, we define $w(\nu^{(i)}_{j})$ as the solution of systems of linear equations obtained by replacing $\alpha_k$ and $\nu^{(i)}_{j}$ with $w(\alpha_k)$ and $w(\nu^{(i)}_{j})$ in the conditions (\ref{int}) and (\ref{sumcond}). 

We consider the case where $w=w_4$ as an example. Focusing on the first equation of (\ref{int}), we have
\[
w_4(\nu^{(1)}_4)-w_4(\nu^{(1)}_3)=\chi(w_4(\alpha_1))=\chi(\alpha_1)=\nu^{(1)}_4-\nu^{(1)}_3.
\]
The following equation is obtained by repeating the same calculation. 
\[\left\{
\begin{array}{ll}
	w_4(\nu^{(i)}_2)-w_4(\nu^{(i)}_1)=\nu^{(i)}_2-\nu^{(i)}_1+\gamma& i=0, 1, 2\\
	w_4(\nu^{(i)}_{j+1})-w_4(\nu^{(i)}_{j})=\nu^{(i)}_{j+1}-\nu^{(i)}_{j}& i=1, 2, \ j=2, 3\\
	w_4(\nu^{(0)}_1)+w_4(\nu^{(1)}_1)+w_4(\nu^{(2)}_1)=-\gamma\\
	2w_4(\nu^{(0)}_1)+2w_4(\nu^{(0)}_2)=\nu_0&\\
	w_4(\nu^{(i)}_1)+w_4(\nu^{(i)}_2)+w_4(\nu^{(i)}_3)+w_4(\nu^{(i)}_4)=\nu_i &i=1, 2.
\end{array}\right.
\]
The solution is as follows;
\begin{equation}
w_4(\nu^{(i)}_j)=\left\{
\begin{array}{cc}
	\nu^{(i)}_{j}-\frac{s_i-1}{s_i}\gamma & j=1\\
	\nu^{(i)}_{j}+\frac{1}{s_i}\gamma &  j\neq 1
\end{array}\right.
\end{equation}

We give the explicit action of the generators of $\widetilde{W}$ on $\mcN$ in Appendix \ref{actionN0}.

\subsection{Action of $\widetilde{W}$ on $\mcX$}
The action of $\widetilde{W}$ on $\mcX$ is defined by the automorphisms which realize the action on the Picard group.

We take a decomposition of the Picard groups as in the section \ref{actPicss}. 
\begin{equation}
\begin{array}{l}
\Pic (X_{\boldsymbol{\nu}})=\mbZ H_1 \oplus \mbZ H_2 \oplus\bigoplus_{i=1}^8\mbZ \mcF_i \\
\Pic (X_{w(\boldsymbol{\nu})})=\mbZ H'_1 \oplus \mbZ H'_2 \oplus\bigoplus_{i=1}^8\mbZ \mcF'_i
\end{array}
\end{equation}
Let us fix $w \in \widetilde{W}$. 
\begin{proposition}(Theorem 25 in \cite{Sa1})\label{actsurf}
	There is a unique isomorphism $\phi_w \colon X_{\boldsymbol{\nu}}\rightarrow X_{w(\boldsymbol{\nu})}$ such that
	\begin{equation}\label{actS}
		\phi_w^*(H'_i)=w(H_i), \quad \phi_w^*(\mcF'_j)=w(\mcF_j)
	\end{equation}
	for any $i=1, 2$ and $j=1, \ldots, 8$.
\end{proposition}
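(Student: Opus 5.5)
The plan is to follow Sakai's argument (Theorem 25 in \cite{Sa1}): first build the isomorphism for the generators of $\widetilde{W}$, then obtain the general case by composition, and get uniqueness from rigidity of the exceptional configuration.

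\textbf{Uniqueness.} Suppose $\phi,\phi'$ both satisfy (\ref{actS}). Then $\phi'^{-1}\circ\phi$ is an automorphism of $X_{\boldsymbol{\nu}}$ acting trivially on $\Pic(X_{\boldsymbol{\nu}})$. Such an automorphism preserves each exceptional curve $\mcF_j$ and both rulings $H_1,H_2$. It therefore descends to an automorphism of $\pl\times\pl$ of the form $(g_1,g_2)\in PGL_2\times PGL_2$. This automorphism fixes the eight points $P_1,\dots,P_8$ of (\ref{E7pt}) and preserves the curves $\mcY_1,\mcY_2$, since they are the unique effective divisors in their classes.

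The images of the $P_i$ on each factor give at least three distinct points for generic $\boldsymbol{\nu}$, so $g_1=g_2=\id$ generically. For special $\boldsymbol{\nu}$ I would use instead that $g_k$ must fix pointwise the images of the components of the anticanonical divisor meeting the $\mcF_j$. This is the step needing care.

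\textbf{Existence for generators.} It suffices to construct $\phi_w$ for $w\in\{w_0,\dots,w_7,\sigma\}$ and compose. The composition is compatible because $\phi_{w'}^*$ on the target intertwines $w'$, and the action on $\mcN$ is defined so that $w(\boldsymbol{\nu})$ matches the period map $\chi$.

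\begin{itemize}
\item For $w_i$ with $\alpha_i=\mcF_a-\mcF_b$: the reflection swaps $\mcF_a$ and $\mcF_b$, and $w(\boldsymbol{\nu})$ swaps the corresponding exponents. Then $P'_a=P_b$ and $P'_b=P_a$, so $\phi_w$ is induced by the identity of $\pl\times\pl$ with the labels relabelled.
\item For $w_7$ ($\alpha_7=H_1-H_2$): $\phi_w$ is the swap of the two factors composed with the affine reparametrization dictated by $\nu^{(0)}_1\leftrightarrow\nu^{(0)}_2$.
\item For $\sigma$: $\phi_\sigma$ is an explicit automorphism of $\pl\times\pl$ exchanging $Y_1$ and $Y_2$.
\end{itemize}

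In each of these cases I check directly that the point configuration is mapped onto the one for $w(\boldsymbol{\nu})$.

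\textbf{The central node $w_4$ (main obstacle).} Here $\alpha_4=H_2-\mcF_1-\mcF_5$, and $w_4$ acts by
\[
H_1\mapsto H_1+H_2-\mcF_1-\mcF_5,\qquad \mcF_1\mapsto H_2-\mcF_5,\qquad \mcF_5\mapsto H_2-\mcF_1,
\]
with all other classes fixed. This is an elementary transformation: blow up $P_1$ and $P_5$ and contract the strict transforms of the two vertical fibres through them. Note that $H_2-\mcF_1-\mcF_5$ is effective only if $P_1,P_5$ lie on one fibre; in general the two vertical fibres give two $(-1)$-curves.

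The key check is that the new pair of points, together with the unchanged $P_j$, is exactly the configuration (\ref{E7pt}) evaluated at $w_4(\boldsymbol{\nu})$. I plan to verify this via the period map $\chi$: the image of $\omega$ is again a two-form with poles on the image of $Y$, and the periods transform by $\chi\circ w_4$. Since the point configuration is determined by its periods (Sakai's Torelli-type statement), this identifies the target surface with $X_{w_4(\boldsymbol{\nu})}$.

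By construction the isomorphism satisfies (\ref{actS}). For the degenerate loci where $\gamma=0$ the same construction goes through: $\alpha_4$ becomes effective, but the morphism is still defined via the linear systems $|w(H_i)|$. To cover these loci I would build $\phi_w$ globally as the map given by the complete linear systems $|w(H_1)|\times|w(H_2)|$. These are base-point-free pencils of self-intersection $0$, hence give a morphism to $\pl\times\pl$ contracting exactly the curves in the classes $w(\mcF_j)$.
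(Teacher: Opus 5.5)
The step that fails is your last paragraph, and it cannot be repaired, because the proposition is false on those loci. At $\gamma=0$ we have $w_4(\boldsymbol{\nu})=\boldsymbol{\nu}$, and $\alpha_4=H_2-\mcF_1-\mcF_5$ is effective on both source and target: it is the strict transform of the vertical fibre through $P_1$ and $P_5$.

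Pullback by an isomorphism sends effective classes to effective classes. But $\phi^*(H_2'-\mcF_1'-\mcF_5')=w_4(\alpha_4)=-\alpha_4$, and $-\alpha_4\cdot H_1=-1<0$ with $H_1$ nef, so $-\alpha_4$ is not effective. Hence no $\phi_{w_4}$ with the required pullbacks exists at $\gamma=0$. Your specific claim breaks accordingly: $w_4(H_1)=H_1+\alpha_4$ has $w_4(H_1)\cdot\alpha_4=-1$ against an effective class. So it is not nef, and $|w_4(H_1)|$ is not a base-point-free pencil, since the curve in class $\alpha_4$ is a fixed component.

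The same obstruction occurs for a non-central reflection when the two points it swaps collide. A related degeneration occurs for $w_4$ when $P_1,P_5$ lie on a common horizontal fibre, i.e.\ $\nu^{(0)}_2+\nu^{(1)}_1+\nu^{(2)}_1=0$. Your elementary transformation then produces $\mathbb{F}_2$ rather than $\pl\times\pl$. This reflects the fact that $w_4(\boldsymbol{\nu})$ then has $\nu^{(0)}_1=\nu^{(0)}_2$, where the model (\ref{E7pt}) degenerates ($Y_1=Y_2$).

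So you must state a genericity hypothesis: $\phi_w$ can only exist when $w$ is compatible with the effective cones of $X_{\boldsymbol{\nu}}$ and $X_{w(\boldsymbol{\nu})}$. For $w_4$ this includes $\gamma\neq 0$, which is exactly what the paper assumes when it constructs $\phi_w$ for the central node on $\mcS$. Drop the last paragraph. When you compose generators, use a reduced word, so that the intermediate genericity conditions are exactly those required for $w$ itself.

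Away from these loci your argument is sound and is essentially Sakai's: a new blow-down structure from the transformed classes, identified with $X_{w(\boldsymbol{\nu})}$ through the Torelli property of the period map. The paper gives no proof beyond the citation. For the central node it does write the map on $S_{\boldsymbol{\nu}}$ as the explicit quadratic transformation (\ref{quadtranf}); this is your elementary transformation seen in the $\mbP^2$ model, and it allows a direct check instead of Torelli.

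Two smaller corrections.
\begin{itemize}
\item The remaining $P_j$ are not ``unchanged'' by the elementary transformation. Only their classes are fixed by $w_4$; their coordinates move, as do all the exponents under $w_4$. So the match with (\ref{E7pt}) at $w_4(\boldsymbol{\nu})$ genuinely rests on the Torelli step.
\item The special case of your uniqueness argument needs no extra care. In $y=y_1/y_0$, $z=z_1/z_0$, the $(1,1)$-curves through $P_1,\dots,P_4$ and $P_5,\dots,P_8$ are $\{y+z=\nu^{(0)}_2-\nu^{(0)}_1\}$ and $\{y+z=0\}$, tangent at $(\infty,\infty)$. A ruling-preserving automorphism preserving both (using $\nu^{(0)}_1\neq\nu^{(0)}_2$) must be $(y,z)\mapsto(y+b,z-b)$, so fixing a single $P_j$ forces $b=0$. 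Your proposed substitute, fixing pointwise ``the images of the components'', is not meaningful, since each $Y_k$ projects onto all of $\pl$.
\end{itemize}
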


\subsection{Action of $\widetilde{W}$ on $\mcS$}\label{actS}
Since $S_{\boldsymbol{\nu}}$ is obtained by blowing up $X_{\boldsymbol{\nu}}$ at points, $\phi_w$ in Proposition \ref{actsurf} lifts as an isomorphism $S_{\boldsymbol{\nu}} \to S_{w(\boldsymbol{\nu})}$. We write this map by the same character $\phi_w$. In this section, we find the explicit form of $\phi_w$ when $w$ is a generator of $\widetilde{W}$. 

First, we consider the case where $w=w_i$ when $i$ is not the central nodes. In the case where $R^\perp=E^{(1)}_7$, the action of $w_1$ on $\Pic(S_{\boldsymbol{\nu}})$ and $\mcN$ is given by
\[
w_1(\mcE_i)=\mcE_{(34)(i)}
\]
and
\[
w_1(\nu^{(i)}_{j})=
\left\{
\begin{array}{cc}
	\nu^{(i)}_{(34)(j)} & i=1\\
	\nu^{(i)}_{j} & i\neq 1.
\end{array}
\right. 
\]
Hence, the surfaces $S_{\boldsymbol{\nu}}$ and $S_{w_1(\boldsymbol{\nu})}$ are also the same. We can check that 
$\phi_{w_1}:=\id \colon S_{\boldsymbol{\nu}}\rightarrow S_{w_1(\boldsymbol{\nu})}$ satisfies the condition (\ref{actS}). We can also see that $\phi_{w_i}:=\id \colon S_{\boldsymbol{\nu}}\rightarrow S_{w_i(\boldsymbol{\nu})}$ is the desired isomorphism for other cases.

Next, we consider the case where $w$ is the auromotphism $\sigma$ of the Dynkin diagram of type $E^{(1)}_7$. The action of $\sigma$ on $\Pic(S_{\boldsymbol{\nu}})$ is given by
\[
\sigma(\mcE^{(i)}_j)=\mcE^{(12)(i)}_j
\]
and we have $\sigma(D_j)=D_{(12)(j)}$. It means that $\phi_{\sigma}$ should exchange lines $\{x_0=x_2\}, \{x_x=0\}$ and moves the exceptional curve $\mcE^{(i)}_j$ to $\mcE^{(12)(i)}_j$. 
Let $\phi_{\sigma}\colon S_{\boldsymbol{\nu}} \rightarrow S_{\sigma(\boldsymbol{\nu})}$ be the isomorphisms induced by the isomorphism
\[
\mathbb{P}^2 \rightarrow \mathbb{P}^2; \  (x_0:x_1:x_2)\mapsto (x_0: \tfrac{1}{4}(\nu_1-\nu_2)x_0-x_1: x_0-x_2).
\]
We can see that $\phi_{\sigma}$ satisfies the condition (\ref{actS}).

Finally, we consider the case where $w$ is the reflection associated with the central node. The action of $w$ on $\Pic(S_{\boldsymbol{\nu}})$ is given by
\[w(H)=2H-\mcE^{(0)}_1-\mcE^{(1)}_1-\mcE^{(2)}_1, \quad 
w(\mcE^{(i)}_j)=
\left\{
\begin{array}{ll}
	H-\mcE^{(1)}_1-\mcE^{(2)}_1 &i=0, j=1\\
	H-\mcE^{(0)}_1-\mcE^{(2)}_1&i=1, j=1\\
	H-\mcE^{(0)}_1-\mcE^{(1)}_1&i=2, j=1\\
	\mcE^{(i)}_j & j\neq 1
\end{array}
\right.
\]
It implies that $\phi_{w}$ is the morphism induced by the quadratic transformation of $\mathbb{P}^2$ based at the points $p^{(0)}_1, p^{(1)}_1, p^{(2)}_1$ if those points do not lie on the same line, that is, $\gamma\neq 0$. Assume that $\gamma\neq 0$.
Let  $\phi_{w} \colon S_{\boldsymbol{\nu}}\rightarrow S_{w(\boldsymbol{\nu})}$ be the isomorphism induced by the quadratic transformation
\begin{equation}\label{quadtranf}
	\mathbb{P}^2 \dasharrow \mathbb{P}^2, \  (x_0:x_1:x_2)\mapsto (\gamma x_0f_{12}: af_{01}f_{12}-bf_{12}f_{02}-cf_{01}f_{02} :\gamma x_2f_{01}), 
\end{equation}
where
\[
a:=\nu^{(1)}_{1}-\tfrac{3}{4}\gamma, \ b:=\nu^{(2)}_{1}-\tfrac{3}{4}\gamma, \  c:=\nu^{(0)}_{1}-\tfrac{1}{2}\gamma,
\]
\begin{align*}
	&f_{01}(x_0, x_1, x_2):= (\nu^{(0)}_{1}+\nu^{(1)}_{1})x_0-x_1-\nu^{(0)}_{1}x_2,\\
	&f_{12}(x_0, x_1, x_2):=  -\nu^{(2)}_{1}x_0-x_1+(\nu^{(1)}_{1}+\nu^{(2)}_{1})x_2,\\
	&f_{02}(x_0, x_1, x_2):= \nu^{(2)}_{1}x_0+x_1+\nu^{(0)}_{1}x_2.
\end{align*}
We note that the zeros of $f_{ij}$ forms the line passing through $p^{(i)}_1, p^{(j)}_1$. When $\gamma \neq 0$, the map (\ref{quadtranf}) becomes a projective linear transformation. We can check that $\phi_{w}$ satisfies the condition (\ref{actS}). 

Isomorphisms in Proposition \ref{actsurf} are extended to automorphisms of $\mcS$. Those automorphisms define an action of $\widetilde{W}$ on $\mcS$. The action is equivariant for the map $\mcS\rightarrow \mcN$ by the construction. In the next section, we will see realizations of the action over $\mcS\rightarrow \mcN$ in terms of parabolic connections.

\section{Realization of symmetry of the space of initial conditions}\label{symsec}

Transformations of parabolic connections induce automorphisms of $\mcM$ by the universal property of the moduli spaces.
Our goal is to find transformations for any $w\in \widetilde{W}(R^\perp)$ such that the induced morphism $f_w\colon \mcM\rightarrow \mcM$ makes the following diagrams commutative
\begin{equation}\label{diag}
	\begin{tikzcd}
		\mcM\arrow[r, "f_w"]\arrow[d]&\mcM\arrow[d]\\
		\mcN\arrow[r, "w"]&\mcN,
	\end{tikzcd}  \quad 
	\begin{tikzcd}
		\mcM\arrow[r, "f_w"]\arrow[d, "\tilde{\psi}"]&\mcM\arrow[d, "\tilde{\psi}"]\\
		\mcS\arrow[r, "\phi_w"]&\mcS.
	\end{tikzcd}
\end{equation}
We will see such a realization in three cases;
\begin{itemize}
\item[(i)] the reflections associated with the non-central nodes
\item[(ii)]the automorphism of the Dynkin diagram
\item[(iii)] the reflection associated with the central node
\end{itemize}

\subsection{Realization of the reflection associated with the non-central node}

The action of the reflection $w_i$ associated with the non-central node on $\mcN$ is to exchange adjacent local exponents. Hence, the action of $w_i$ on $\mcM$ is realized by changing not the underlying logarithmic connection but the parabolic structures.
We will see it in the cases where $R^\perp=E^{(1)}_7$ and $w_0$. We can realize the action of other $w_i$ in the same manner. $w_0$ acts on $\mcN$ by
\[
w_0(\nu^{(i)}_j)=
\left\{
\begin{array}{ll}
	\nu^{(i)}_{(12)(j)}&i=0, \\\nu^{(i)}_j&i\neq 0
\end{array}\right.
\]
Hence, we only have to replace $l^{(0)}_{1}$ with the eigenspace of $\res_{0}\nabla$ for $\nu^{(0)}_{1}$.
That is, for a $\boldsymbol{\nu}$-parabolic logarithmic connection $(E,\nabla, l_*)$, we define a $w_0(\boldsymbol{\nu})$-parabolic logarithmic connection $(E',\nabla', l'_*)$ by $E':=E$, $\nabla':=\nabla$, $l'^{(i)}_{j}:=l^{(i)}_{j}$ for $(i,j)\neq(1,2)$, and 
\[
l'^{(0)}_{1}:=\Ker(\res_0\nabla-\nu^{(0)}_{1}\id\colon E|_{0}\longrightarrow E|_{1})
\]
The transformation $(E,\nabla, l_*) \mapsto (E',\nabla', l'_*)$ induces an automorphism $f_{w_1}\colon \mcM\rightarrow \mcM
$, and $f_{w_1}$ commutes the diagrams (\ref{diag}). 

We summarize realizations of other reflections in Appendix \ref{apprealize}.

\subsection{Realization of the automorphism}\label{realizeaut}
The action of $\sigma$ on $\mcN$ is as follows;
\[
\sigma(\nu^{(i)}_{j})=
\left\{
\begin{array}{cc}
\nu^{(0)}_{j} & i=0\\
\nu^{(2)}_{j}+\frac{1}{4}(\nu_1-\nu_2) & i=1\\
\nu^{(1)}_{j}+\frac{1}{4}(\nu_2-\nu_1) & i=2,
\end{array}
\right. 
\]
Hence, $\sigma$ is realized by exchanging local exponents at $z=1$ and $z=\infty$ and by shifting them. We can show the following in a same manner of Section 4.2 in \cite{Ma2}.  
\begin{proposition}
Given a $\boldsymbol{\nu}$-logarithmic connection $(E, \nabla, l_*)$, set 
\[
(E', \nabla'):=s^*(E, \nabla)\otimes (\mcO_{\pl}, d+\tfrac{\nu_1-\nu_2}{4}\tfrac{dz}{z-1}),
\]
where $s\colon \pl \rightarrow \pl$ is a morphism defined by $z\mapsto \frac{1}{z-1}$. 
Then, there is a unique parabolic structure $l'^{(i)}_{*}$ of $E'|_{i}$ for each $i$ such that the collection $(E', \nabla', l'_*:=\{l'^{(i)}_{*}\}_{i=0, 1, 2})$ is a $\sigma(\boldsymbol{\nu})$-parabolic connection. Let us define a morphism $f_{\sigma}\colon \mcM\rightarrow \mcM$ by $f_{\sigma}((E, \nabla, l_*)):=(E', \nabla', l'_*)$. Then $f_{\sigma}$ commutes the diagrams (\ref{diag}). 
\end{proposition}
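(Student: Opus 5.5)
Throughout I take $s$ to be the M\"obius transformation fixing $t_0=0$ and exchanging $t_1=1$ and $t_2=\infty$, i.e. $z\mapsto \tfrac{z}{z-1}$. This is the map compatible with $\phi_\sigma$, since $(x_0:x_1:x_2)\mapsto(x_0:\ast:x_0-x_2)$ sends $q=x_0/x_2$ to $q/(q-1)$.

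\textbf{Step 1: $(E',\nabla')$ is a logarithmic connection of the right type.} Since $s$ preserves $D$, $s^*\nabla$ is logarithmic along $D$, and $\res_{t}(s^*\nabla)$ is conjugate to $\res_{s(t)}\nabla$. Put $c=\tfrac14(\nu_1-\nu_2)$. The rank one factor $(\mcO_{\pl},d+c\tfrac{dz}{z-1})$ has residue $0$ at $0$, $c$ at $1$ and $-c$ at $\infty$, so tensoring shifts the exponents at $1$ by $+c$ and at $\infty$ by $-c$. The new exponents are therefore exactly $\sigma(\boldsymbol{\nu})$ as listed. Both $1$ and $\infty$ carry spectral type $1^4$, so $\bm{r}$ is preserved, and $\deg E'=\deg s^*E=1$.

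\textbf{Step 2: parabolic structure, stability, and the diagram over $\mcN$.} Define $l'^{(i)}_*$ as the pull-back filtration of $l^{(s(t_i))}_*$ under $E'|_{t_i}=E|_{s(t_i)}$. Tensoring with a line bundle does not change fibres and shifts residues by scalars, so the condition $(\res-\nu'\,\id)(l'_j)\subset l'_{j+1}$ holds. This filtration is the only one compatible with the identification; when the exponents are distinct it is forced by the eigenspace description. For stability, $\nabla'$-invariant subbundles of $E'$ correspond bijectively to $\nabla$-invariant subbundles $F$ of $E$ via $F\mapsto s^*F\otimes\mcO$, and this correspondence preserves rank, degree and the dimensions of the intersections with the flags. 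For $0<\alpha^{(i)}_j\ll1$ the $\boldsymbol{\alpha}$-stability condition reduces, as in the proof of Proposition \ref{bdlE7}, to a slope condition plus a generic weight comparison that is symmetric in the points. So $\boldsymbol{\alpha}$-stability is preserved, after choosing $\boldsymbol{\alpha}$ symmetric under $1\leftrightarrow\infty$ if necessary. The construction works in flat families, so by the universal property of the moduli space (Theorem \ref{modpc}) it defines $f_\sigma\colon\mcM\to\mcM$ covering $\sigma$ on $\mcN$. Applying the same construction for $\sigma^{-1}=\sigma$ gives an inverse, so $f_\sigma$ is an automorphism. This gives the first diagram in (\ref{diag}).

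\textbf{Step 3 (main obstacle): compatibility with $\tilde\psi$ and $\phi_\sigma$.} Take $\sigma'=s^*\sigma$. The filtration of Assumption \ref{appfil} for $(E',\nabla',\sigma')$ is $s^*F^j$, because $\nabla'-s^*\nabla$ is $\mcO$-linear and scalar, so it does not affect condition (3). Hence $T(\nabla',\sigma')=s^*T(\nabla,\sigma)$. It follows that $\App(E',\nabla')=2[0]+[s(q)]$ and $q'=q/(q-1)$, which matches the first and third coordinates of $\phi_\sigma$.

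For the dual parameter I would compute on the open set where $q\neq0,1,\infty$, using the normal form of Proposition \ref{AE7}. Pulling back gives
\[
s^*\tfrac{dz}{z(z-1)}=-\tfrac{dz}{z},
\]
so the fibre coordinate of $L$ is rescaled by $-(z-1)$. Tensoring adds $c\,\tfrac{dz}{z-1}=c z\,\tfrac{dz}{z(z-1)}$ to the relevant entry of $g$ in the construction of $\varphi$, and this contributes $cq$ at the apparent point. Tracking $h$ in the construction of $\varphi$, one expects
\[
p'=\frac{cq-p}{q-1}.
\]
This is exactly what $(x_0:cx_0-x_1:x_0-x_2)$ gives in the chart $(q,p)=(x_0/x_2,x_1/x_2)$. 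The delicate part is getting the signs and normalisations of $h$ right, including the choice of $\nabla_0$ and the twist of $\mcO([\infty])$ under $s$; I would check this by directly gauge-transforming $A(z)$ into the normal form.

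\textbf{Step 4: extension.} Once $\tilde\psi\circ f_\sigma=\phi_\sigma\circ\tilde\psi$ holds on the dense open set where $q\notin\{0,1,\infty\}$, it holds on all of $\mcM$. Indeed, $\mcM$ is smooth, hence reduced, and $\mcS$ is separated, so two morphisms agreeing on a dense open set agree everywhere. This gives the second diagram in (\ref{diag}).
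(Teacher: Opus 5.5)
Your proposal is correct in outline and follows the route the paper intends; the paper itself only refers to Section 4.2 of \cite{Ma2}. The route is: pull back by the involution of $\pl$ fixing $0$ and swapping $1,\infty$, twist by a rank one connection, and check that $(q,p)$ moves by the linear map inducing $\phi_\sigma$. You are right that $s$ must be $z\mapsto z/(z-1)$: the printed $z\mapsto 1/(z-1)$ sends $0$ to $-1$ and does not preserve $D$. You are also right that uniqueness of $l'_*$ is only guaranteed when the local exponents at each point are distinct. Two points still need to be closed.

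\textbf{Step 3: the dual parameter.} The formula $p'=(cq-p)/(q-1)$ is only predicted, and your worry about $\nabla_0$ is exactly what must be settled. It can be settled intrinsically.
\begin{itemize}
\item Since $g$ does not depend on the splitting, split $E'=s^*\mcO([\infty])\oplus\mcO^{\oplus 3}$. Identify $s^*\mcO([\infty])=\mcO([1])\cong\mcO([\infty])$ by multiplication by $z-1$.
\item With this splitting, $s^*\nabla_0$ and $\nabla'_0$ differ only by $-\tfrac{dz}{z-1}$ acting on the $\mcO(1)$-summand. That summand is $E'^{3}\subset E'^{1}$, so the difference dies in $E'/E'^1\otimes L$.
\item Hence $g'=s^*g+c\tfrac{dz}{z-1}\cdot\mathrm{pr}$, that is, $h'=s^*h+c\tfrac{dz}{z-1}\big|_{q'}$ in $L|_{q'}$. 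Combined with $s^*\tfrac{dz}{z(z-1)}=-(z-1)\tfrac{dz}{z(z-1)}$, this gives your formula at every point of $\mcM$, with no gauge computation.
\end{itemize}
Concretely, the last row $(0,0,z-q,p)$ of the normal form in Proposition \ref{AE7} becomes $\bigl(0,0,-(s(z)-q)\tfrac{dz}{z},\,-p\tfrac{dz}{z}+c\tfrac{dz}{z-1}\bigr)$. So $\varphi\circ f_\sigma=\Theta\circ\varphi$, where $\Theta$ is the automorphism of $\mbL$ covering $s$ followed by translation by the section $c\tfrac{dz}{z-1}$ of $L$. The map $\Theta$ extends to $\mbP(\mcO\oplus L)$, preserves the infinity section, and descends to $(x_0:x_1:x_2)\mapsto(x_0:cx_0-x_1:x_0-x_2)$. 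Your Step 4 then handles the lift to $\mcS$.

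\textbf{Step 2: stability.} You should not make $\boldsymbol{\alpha}$ symmetric under $1\leftrightarrow\infty$. That violates the standing assumption that the weights $\alpha^{(i)}_j$ are pairwise distinct, and it is unnecessary. Because $\gcd(\deg E,\rank E)=1$ and $0<\alpha^{(i)}_j\ll 1$, a proper $\nabla$-invariant subbundle $F$ violates the stability inequality if and only if $\deg F\geq 1$:
\begin{itemize}
\item if $\deg F\geq 1$, then $\deg F/\rank F\geq 1/(r-1)>1/r$;
\item if $\deg F\leq 0$, its slope is at most $0<1/r$ up to small terms.
\end{itemize}
So $\boldsymbol{\alpha}$-stability does not depend on the small weights at all. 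The condition ``no invariant subbundle of positive degree'' is visibly preserved by $s^*$ and by twisting with a connection on the trivial bundle.
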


\subsection{Simpson's middle convolution}\label{mcsec}
We review the theory of middle convolution of logarithmic connections over $(\pl, D=[t_1]+\cdots+[t_n])$ defined in \cite{Sim}. 
Put $(Y, D_Y):=(\pl, D)$ and $(Z, D_Z):=(\pl, D)$, and let  $p_1\colon Y\times Z\rightarrow Y$ and $p_2\colon Y\times Z\rightarrow Z$ be the projections. Let $\pi \colon X \rightarrow Y\times Z$ be the blow-up at the $n$ points $(t_1, t_1), \ldots, (t_n, t_n)$, and set $\pi_1=p_1 \circ \pi$ and $\pi_2=p_2 \circ \pi$. We define a normal crossing divisor $J$ on $X$ as
\[
J:=\sum_{i=1}^nH_i+\sum_{i=1}^nV_i+\sum_{i=1}^nU_i+T,
\]
where $H_i$, $V_i$, $T$ are the strict transformations of $\{t_i\}\times Z$, $Y \times \{t_i\}$, the diagonal curve, respectively, and $U_i$ is the exceptional curve corresponding to $(t_i, t_i)$. Set $\xjzq{1}:=\xj{1}/\pi_2^*\zd$.

We consider the middle convolution $\mc_{\beta}(E, \nabla)$ of logarithmic connections $(E, \nabla)$ over $(\pl, \bm{t}=\{t_i\}_{i=1}^n)$ for $\beta \in H^0(X, \xj{1})$. We define a logarithmic connection $(F, \nabla_F)$ over $(X,J)$ as
\[
(F, \nabla_F)=\pi_1^*(E, \nabla)\otimes (\mcO_X, d+\beta). 
\]
Consider the de Rham complex $\DR_X:=\DR_X(F, \nabla_F)$ and the relative de Rham complex $\rDR:=\DR_{X/Z}(F, \nabla_F)$ defined as follows;
\begin{align*}
	\DR_X(F, \nabla_F) :=&[F \overset{\nabla_F}{\longrightarrow}F\otimes \xj{1} \overset{\nabla_F}{\longrightarrow}F\otimes \xj{2}],\\
	\DR_{X/Z}(F, \nabla_F) :=&[F \overset{\overline{\nabla}_F}{\longrightarrow}F\otimes \Omega^1_{(X,J)/(Z,D)}],
\end{align*}
where $\overline{\nabla}_F$ is the composition of $\nabla_F$ and the quotient map $F\otimes\xj{1}\rightarrow F\otimes\xjzq{1}$. Two complexes $\DR_X$ and $\rDR$ fit into the short exact sequence 
\begin{equation}\label{drex}
	0\to \rDR[-1]\otimes \pi_2^*\zd\to \DR_X\to \rDR\to 0.
\end{equation}
When $\mathbb{R}^1\pi_{2*}\rDR$ is locally free, the connecting homomorphism of the above short exact sequence defines a connection map
\[
\nabla^{\GM}: \mathbb{R}^1\pi_{2*}\rDR \to \mathbb{R}^2\pi_{2*}(\rDR[-1]\otimes \pi_2^*\zd) \cong \mathbb{R}^1\pi_{2*}\rDR \otimes \zd,
\]
which is called a logarithmic Gauss-Manin connection. We define the middle convolution $\mc_{\beta}(E,\nabla)$ as a subconnection of $(\mathbb{R}^1\pi_{2*}\rDR, \nabla^{\GM})$. Set
\[
\mcF^0:=(F|_{T}/\Image (\res_T\nabla_F)) \oplus \bigoplus_{i=1}^n(F|_{H_i}/\Image (\res_{H_i}\nabla_F)),
\]
where $\res_T\nabla_F\colon F|_{T}\rightarrow F|_{T}$ and $\res_{H_i}\nabla_{F}\colon F|_{H_i}\rightarrow F|_{H_i}$ be the endmorphisms induced by $\nabla_F$. We define a sheaf $F'$ by the following short exact sequence 
\begin{equation}\label{fprime}
	0 \to F' \to F\to \mcF^0\to 0. 
\end{equation}
Then, we have $\overline{\nabla}_F(F)\subset F'\otimes \xjzq{1}$. We define a complex $\rMDR:=\MDR_{X/T}(F, \nabla_F)$ of sheaves over $\mbC$ by
\begin{equation}
	\MDR_{X/T}(F, \nabla_F) :=[F \overset{\overline{\nabla}_F}{\longrightarrow}F'\otimes \Omega^1_{(X,J)/(Z,D)}].
\end{equation}
There is a short exact sequence
\[
0 \to \rMDR\to \rDR \to \mcF^0[-1] \to 0, 
\]
which induces 
\[\pi_{2*} \mcF^0[-1]\to \mbR^1\pi_{2*}\rMDR \to \mbR^1\pi_{2*}\rDR.
\]
Since $\pi_{2*} \mcF^0[-1]=0$, the sheaf $\mbR^1\pi_{2*}\rMDR$  becomes a subsheaf of $\mbR^1\pi_{2*}\rDR$. By definition, we have 
\[
\nabla^{\GM}(\mbR^1\pi_{2*}\rMDR)\subset \mbR^1\pi_{2*}\rMDR\otimes \xjzq{1}.
\]
Let  $\nabla^{\GM}_{\tmid}$ be the restriction of $\nabla^{\GM}$ on $\mbR^1\pi_{2*}\MDR_{X/T}$. 

\begin{definition}
	Assume that $\mbR^1\pi_{2*}\rMDR$ is locally free. We call the pair $(\mbR^1\pi_{2*}\rMDR, \nabla^{\GM}_{\tmid})$ \textit{the middle convolution} of $(E, \nabla)$ for $\beta$, and denote it by $\mc_\beta(E,\nabla)$; 
	\[
	\mc_\beta(E,\nabla):=(\mbR^1\pi_{2*}\rMDR, \nabla^{\GM}_{\tmid}).
	\]
\end{definition}

Next, we will see the variation of local exponents. Let us fix $\boldsymbol{\mu}=(\mu^{(i)}_{j})_{1\leq j\leq r}^{1\leq i\leq n}\in \mbC^{rn}$. For $\mu \in \mbC$, we define a divisor $[\boldsymbol{\mu}]_i$ of $\mathbb{C}$ by $[\boldsymbol{\mu}]_i:=\sum_{\mu}m_i(\mu)[\mu]$, where $m_i(\mu):=\#\{j\mid \mu^{(i)}_{j}=\mu\}$. We set
\[
\delta(\beta, \boldsymbol{\mu}):=(n-2)r-\sum_{i=1}^{n}m_i(-\beta^{H_i}).
\]
\begin{proposition}(Scholium 5.8. in \cite{Sim})\label{variation}
	Given a $\boldsymbol{\mu}$-parabolic connection $(E, \nabla, l_*)$, set $(E', \nabla'):=\mc_{\beta}(E,\nabla)$. Let $\boldsymbol{\mu}'$ be the set of local exponents of $\nabla'$. Suppose that $\mu^{(i)}_{1}, \ldots, \mu^{(i)}_{r}$ don't differ by nonzero integers, $\mu+\beta^{H_i}+\beta^T\notin \mbZ$ and $\mu+\beta^{H_i}\notin \mbZ\setminus\{0\}$ for any local exponent of $\nabla'$ at $t_i$, and $\beta^T\notin \mbZ$. Then the rank of $E'$ is $r+\delta(\beta, \boldsymbol{\mu})$ and we have
	\begin{align*}
		[\boldsymbol{\mu}']_i=
		&(m_i(-\beta^{H_i})+\delta(\beta, \boldsymbol{\mu}))[\beta^{V_i}]+\sum_{\mu+\beta^{H_i}\neq 0}m_i(\mu)[\mu+\beta^{U_i}].
	\end{align*}
\end{proposition}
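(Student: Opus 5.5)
The plan is to reduce both assertions to local computations on the fibres of $\pi_2\colon X\to Z$. I would first fix a general point $z\in Z\setminus D$ and compute $\mbR^1\pi_{2*}\rMDR$ at $z$ as the first hypercohomology of the restricted complex $[F|_{X_z}\to F'|_{X_z}\otimes\Omega^1_{X_z}(\log J_z)]$. The fibre $X_z\cong\pl$ carries the logarithmic connection $(E,\nabla)\otimes(\mcO,d+\beta|_{X_z})$ with poles at $t_1,\dots,t_n$ and $z$, and the residue at $z$ is $\beta^T$ times the identity.

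\textbf{Rank.} Since $\beta^T\notin\mbZ$ and the conditions on $\mu+\beta^{H_i}$ hold, there are no global horizontal sections, so $\mbH^0=0$. By Serre duality for the middle complex, $\mbH^2$ vanishes as well. Hence the rank equals $-\chi$ of the restricted complex. I would compute this Euler characteristic by Riemann--Roch. The sheaf $F$ has degree $d$, and the target $F'\otimes\Omega^1(\log)$ has degree $d+(n-1)r-2r$ minus the colengths $\dim\Coker$ of the residues. At the point $z$ the residue is invertible, so nothing is lost there. At $t_i$ the colength of $F'$ is $\dim\Coker(\res_{t_i}\nabla+\beta^{H_i})$, which equals $m_i(-\beta^{H_i})$ under the semisimplicity-type hypothesis that exponents do not differ by integers. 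Combining these gives $r+\delta(\beta,\boldsymbol{\mu})$ with $\delta=(n-2)r-\sum_i m_i(-\beta^{H_i})$.

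\textbf{Local exponents.} I would work in a neighbourhood of $t_i\in Z$ and degenerate the fibre. Over $z=t_i$, the fibre of $X$ becomes $V_i\cup U_i$, with $U_i$ the exceptional curve meeting $T$ and $H_i$. The residue of $\nabla^{\GM}_{\tmid}$ at $t_i$ is computed by decomposing the hypercohomology of the special fibre via the normal-crossing structure (a Steenbrink/nearby-cycles type argument, or directly a \v{C}ech computation).
\begin{itemize}
\item On $U_i\cong\pl$, which has three special points (from $T$, $H_i$, and $V_i$), the restricted connection is $(E|_{t_i},\res)\otimes(\mcO,d+\beta)$. Its contribution is the image of $E|_{t_i}$ modulo the $(-\beta^{H_i})$-generalized eigenspace, on which the residue in the $Z$-direction acts by $\mu+\beta^{U_i}$. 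This yields the terms $\sum_{\mu+\beta^{H_i}\neq0}m_i(\mu)[\mu+\beta^{U_i}]$.
\item The component $V_i$ carries $\pi_1^*E$ twisted by $\beta|_{V_i}$. The normal residue along $V_i$ is the scalar $\beta^{V_i}$, so all remaining classes have exponent $\beta^{V_i}$. Their number is forced by the rank count: $r+\delta-(r-m_i(-\beta^{H_i}))=m_i(-\beta^{H_i})+\delta$.
\end{itemize}

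The main obstacle is the second step: showing that the residue of the Gauss--Manin connection really splits along the two components with exactly these eigenvalues. This requires showing there is no extra integer shift, which is where the non-integrality hypotheses on $\mu+\beta^{H_i}+\beta^T$ and $\beta^T$ enter, and which guarantees local freeness of $\mbR^1\pi_{2*}\rMDR$ near $t_i$. I would first try to verify this by an explicit local model: diagonalize $\res_{t_i}\nabla$, so that the computation reduces to rank one connections $z^{\mu}$. For these, the convolution integrals $\int (y-t_i)^{\mu}(y-z)^{\beta^T}\cdots dy$ can be handled by standard Euler--Beta type asymptotics, and the exponents $\mu+\beta^{U_i}$ and $\beta^{V_i}$ can be read off directly.
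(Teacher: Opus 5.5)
The paper gives no argument for this statement (it is quoted from Simpson), so your proposal has to stand on its own. The architecture is reasonable, and the vanishing of $\mathbb{H}^0$ and $\mathbb{H}^2$ on a general fibre is fine, but there are two gaps.

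\textbf{The rank step.} You claim that $\dim\Coker(\res_{t_i}\nabla+\beta^{H_i})=m_i(-\beta^{H_i})$ follows from the exponents not differing by integers. The hypothesis only excludes \emph{nonzero} integer differences, so coincident exponents, and hence Jordan blocks, are allowed. The cokernel then has dimension $g_i:=\dim\Ker(\res_{t_i}\nabla+\beta^{H_i})$, which is strictly smaller than $m_i(-\beta^{H_i})$ as soon as there is a Jordan block at $-\beta^{H_i}$.

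Done correctly, your fibrewise computation gives rank $(n-1)r-\sum_k g_k$. Note that
\[
\deg\bigl(F|_{X_z}\otimes\Omega^1_{X_z}(\log(t_1+\cdots+t_n+z))\bigr)=d+(n+1)r-2r=d+(n-1)r;
\]
with the value $d+(n-1)r-2r$ you wrote, you would get $\delta-r$, not $r+\delta$. Moreover, on $U_i$ a Jordan block of size $l$ at $-\beta^{H_i}$ contributes $l-1$ exponents $\beta^{V_i}+\beta^T$. So the formula as stated needs $\res_{t_i}\nabla$ to be semisimple on its $(-\beta^{H_i})$-eigenspace. Impose this explicitly rather than trying to deduce it. It does hold in the paper's application on $\mcN^0$, where the $\nu^{(i)}_j$ are distinct and the flag condition forces the residues to be diagonalizable.

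\textbf{The exponent step.} This is more serious: the exponent computation, which is the real content of the statement, is only asserted. You need three things:
\begin{enumerate}
\item base change at $t_i$, identifying the fibre of $\mbR^1\pi_{2*}\rMDR$ with $\mathbb{H}^1$ of the restricted complex on $\pi_2^{-1}(t_i)=U_i\cup V_i$;
\item the sequence $0\to\mcO_{V_i}(-q)\to\mcO_{U_i\cup V_i}\to\mcO_{U_i}\to 0$, where $q=U_i\cap V_i$;
\item a proof that the residue of $\nabla^{\GM}$ preserves the resulting filtration and acts on the two pieces by $\res_{t_i}\nabla+\beta^{U_i}$ and by $\beta^{V_i}$, with no integer shift.
\end{enumerate}

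The third point is not automatic. The residue comes from the connecting map of (\ref{drex}), i.e.\ from contracting $\nabla_F$ with local logarithmic lifts of $(z-t_i)\partial_z$, and no lift is normal to both components.
\begin{itemize}
\item Near $U_i$, the Euler field of $(y-t_i,z-t_i)$ is normal to $U_i$ but tangent to $V_i$.
\item Away from $U_i$, where $X$ is locally a product, $(z-t_i)\partial_z$ is normal to $V_i$.
\item Near $q$ the two lifts differ by the relative field $u\partial_u-v\partial_v$, with $u=y-t_i$ and $v=(z-t_i)/(y-t_i)$.
\end{itemize}
You must show that this \v{C}ech correction neither mixes the two pieces nor shifts eigenvalues. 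This is where the non-integrality of the relative residue at the node, $\res_{t_i}\nabla+\beta^{H_i}+\beta^T$, enters: it makes the relative complex acyclic near $q$.

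Your rank-one fallback does not substitute for this. It is legitimate on $U_i$, where $F|_{U_i}=E|_{t_i}\otimes\mcO_{U_i}$ has residues $\res_{t_i}\nabla+\beta^{H_i}$, $\beta^T$ and $-(\res_{t_i}\nabla+\beta^{H_i}+\beta^T)$ at $H_i\cap U_i$, $T\cap U_i$ and $q$; these are exactly the three non-integrality hypotheses. But the $V_i$-piece is $(E,\nabla)\otimes(\mcO,d+\beta|_{V_i})$ on all of $Y$, twisted by $-q$, and it does not split when you diagonalize $\res_{t_i}\nabla$. Its dimension $\delta+m_i(-\beta^{H_i})$ has to come from Riemann--Roch on $V_i$, and its residue $\beta^{V_i}$ from the argument above.
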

We denote the residue of $\beta\in H^0(X, \Omega_X^1(\log J))$ along $H_i, V_i, U_i, T$ by $\beta^{H_i}, \beta^{V_i}, \beta^{U_i}, \beta^{T}$. We can find by the residue formula for suitable curves that residues satisfy the relations
\begin{equation}\label{betarel}
	\beta^T+\sum_{i=1}^n\beta^{H_i}=0, \quad  \beta^T+\sum_{i=1}^n\beta^{V_i}=0, \quad \beta^{U_i}=\beta^{H_i}+\beta^{V_i}+\beta^{T}.
\end{equation}
Conversely, given $\beta^{H_i}, \beta^{V_i}, \beta^{U_i}, \beta^{T} \in \mathbb{C}$ satisfying the relations $(\ref{betarel})$, there is a unique $\beta \in H^0(X, \Omega_X^1(\log J))$ with these residues. 
\subsection{Realization of associated with the central node}\label{realizecn}
We finally consider the realization $w$ of the central node. We use middle convolution for logarithmic connections for realization. 

Let $\beta \in H^0(X, \Omega_X^1(\log J))$ with $\beta^T=\nu^{(0)}_{1}+\nu^{(1)}_{1}+\nu^{(2)}_{1}$, $\beta^{H_i}=-\nu^{(i)}_{1}$, $\beta^{V_i}=\nu^{(i)}_{1}-\tfrac{s_i-1}{s_i}\beta^T$ for $i=0, 1, 2$. Set
\[
\mcN^0:=\left\{\boldsymbol{\nu}=(\nu^{(i)}_{j})_{1\leq j\leq s_i}^{0\leq i\leq 2} \in \mcN \vb \text{$\nu^{(i)}_{j}-\nu^{(i)}_{k}\notin \mbZ$ for all $i, j, k$} \right\}.
\]
We impose the condition that $\nu^{(i)}_{j}-\nu^{(i)}_{k}\notin \mbZ\setminus \{0\}$ in order to apply Proposition \ref{variation}, and further require that $\nu^{(i)}_{j}-\nu^{(i)}_{k}\neq 0$ so that parabolic structures of $\mc(E, \nabla)$ are uniquely determined.
\begin{theorem}\label{MT2}
The following diagrams are commutative;
\[
\begin{tikzcd}
	\mcM^0\arrow[r, "\mc_\beta"]\arrow[d]&\mcM^0\arrow[d]\\
	\mcN^0\arrow[r,"w"]&\mcN^0,
\end{tikzcd} \quad 
\begin{tikzcd}
	\mcM^0\arrow[r, "\mc_\beta"]\arrow[d, "\tilde{\psi}_{\boldsymbol{\nu}}"]&\mcM^0\arrow[d, "\tilde{\psi}_{w(\boldsymbol{\nu})}"]\\
	\mcS^0\arrow[r, "\phi_{w}"]&\mcS^0.
\end{tikzcd}
\]
where $\mcM^0=\mcM\times_{\mcN}\mcN^0$ and  $\mcS^0=\mcS\times_{\mcN}\mcN^0$. 
\end{theorem}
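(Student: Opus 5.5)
The plan has three steps. First I will show that $\mc_\beta$ is well defined as a map $\mcM^0\to\mcM^0$ over $w$. Next I will show that the apparent singularity is preserved. Finally, I will show that the dual parameter transforms by the quadratic map (\ref{quadtranf}).

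\emph{Step 1: the left square.} For $\boldsymbol{\nu}\in\mcN^0$, the hypotheses of Proposition \ref{variation} hold for the chosen $\beta$, since all exponent differences at each $t_i$ are non-integral. Here $\beta^{H_i}=-\nu^{(i)}_1$, so $m_i(-\beta^{H_i})=r^{(i)}_1$. Then
\[
\delta=(3-2)r-\sum_i r^{(i)}_1=r-r/2-r/4-r/4=0
\]
for $(2^2,1^4,1^4)$, and similarly $6-3-2-1=0$ for $(3^2,2^3,1^6)$. So the rank is preserved. By Proposition \ref{variation} the new exponents are $\beta^{V_i}$ (multiplicity $r^{(i)}_1$) and $\nu^{(i)}_j+\beta^{U_i}$ for $j\neq1$. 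Using (\ref{betarel}), $\beta^{U_i}=\beta^{H_i}+\beta^{V_i}+\beta^T=\beta^T/s_i$, so the new exponents are $\nu^{(i)}_1-\frac{s_i-1}{s_i}\gamma$ and $\nu^{(i)}_j+\frac{1}{s_i}\gamma$. This matches $w(\boldsymbol{\nu})$ as computed for $w_4$ (and the analogous $E^{(1)}_8$ formula). The Fuchs relation then forces degree $1$.

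Since the exponents are distinct, the parabolic structure on $\mc_\beta(E,\nabla)$ is uniquely determined by eigenspaces. Stability follows because $\mc_\beta$ preserves irreducibility (Katz/Simpson), with $\mc_{-\beta}$ as inverse up to the standard twist. Moreover, every $\boldsymbol{\alpha}$-stable object with $0<\alpha\ll1$ and generic exponents is irreducible: an invariant subbundle would give a Fuchs-type relation contradicting non-integrality. Because the construction works in families, it gives a morphism $\mcM^0\to\mcM^0$ over $w$, which is an isomorphism with inverse given by the corresponding inverse convolution.

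\emph{Step 2: computing the image in normal form.} For the right square it suffices to check equality on the dense open set where $\App\neq t_i$ and $\gamma\neq0$. Both sides are morphisms to the separated $\mcS^0$ and $\tilde\psi$ is an isomorphism, so agreement there suffices. On that locus I will write $\nabla=d+A(z)\frac{dz}{z(z-1)}$ in the normal form of Proposition \ref{AE7} (resp. \ref{AE8}). I then compute $\mc_\beta$ explicitly in the Dettweiler--Reiter additive form. Write $A(z)/(z(z-1))=\sum_i A_i/(z-t_i)$ with residues $A_0,A_1$ and $A_\infty=-A_0-A_1$. Katz's middle convolution with parameter $\beta^T$, preceded by the addition of $\beta^{H_i}$ at $t_i$ and followed by the addition giving $\beta^{V_i}$, gives a new Fuchsian system. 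This system lives on the trivial bundle $\mcO^{r}$. Since $\delta=0$, the comparison $\mbR^1\pi_{2*}\rMDR$ versus Katz's quotient $\mbC^{rn}/(\mathcal K+\mathcal L)$ should identify the cohomological construction with this matrix one, up to the degree-one modification.

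The degree-one modification is handled by an elementary transformation at $\infty$ along the line $\res_\infty$-eigenvector. I will then choose the section $\sigma\colon\mcO(1)\to E'$ and rebuild the filtration of Assumption \ref{appfil}. After that, I read off the new apparent singularity $q'$ as the zero of the $(r,r-1)$ entry, and the dual parameter $p'$ as the corresponding $(r,r)$ entry after putting the result back into normal form.

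\emph{Step 3: comparison with $\phi_w$, and the main obstacle.} The last step is to verify that $(q':p':1)$ equals the image of $(q:p:1)$ under (\ref{quadtranf}), using the relations (\ref{muE7}), (\ref{muE8}) for $\mu_0,\mu_1$. The conceptual shortcut I would try first is a Picard-group argument. Both $\tilde\psi\circ\mc_\beta\circ\tilde\psi^{-1}$ and $\phi_w$ are isomorphisms $S_{\boldsymbol{\nu}}\setminus D_{\boldsymbol{\nu}}\to S_{w(\boldsymbol{\nu})}\setminus D_{w(\boldsymbol{\nu})}$. If the former extends to $S_{\boldsymbol{\nu}}$ and induces $w$ on $\Pic$, then uniqueness in Proposition \ref{actsurf} forces equality. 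One caveat: uniqueness there is only up to automorphisms acting trivially on $\Pic$, which are trivial for these generic rational surfaces.

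To check the action on $\Pic$, it suffices to track where the exceptional curves $\mcE^{(i)}_1$ and the lines through pairs $p^{(i)}_1,p^{(j)}_1$ go. For example, the curve $\mcE^{(0)}_1$ (connections with $q=0$, $p=-\nu^{(0)}_1$) should map to the strict transform of the line $f_{12}=0$. This can be detected from the residue behavior of $\mc_\beta$ at degenerate loci.

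I expect the main obstacle to be Step 2, namely relating Simpson's $\mbR^1\pi_{2*}\rMDR$ to an explicit matrix. In particular, extracting $q',p'$ requires handling the rank-$6$ $E^{(1)}_8$ normal form. The Picard shortcut reduces this work to controlling the extension over the boundary, which is where I would focus.
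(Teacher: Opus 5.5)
\textbf{Left square.} This part matches the paper: Proposition~\ref{variation} with $\delta=0$ and $\beta^{U_i}=\gamma/s_i$ reproduces $w(\boldsymbol{\nu})$, and the Fuchs relation gives degree one. Two side claims are off, though.
\begin{itemize}
\item Proposition~\ref{variation} also needs $\gamma\notin\mbZ$ and $\nu^{(i)}_j-\nu^{(i)}_1+\gamma\notin\mbZ$. These do not follow from non-integrality of exponent differences at a single point.
\item Your irreducibility argument is not valid on $\mcM^0$. The Fuchs relation for an invariant subbundle involves sums such as $\nu^{(0)}_a+\nu^{(1)}_b+\nu^{(2)}_c$ of exponents at \emph{different} points. $\mcN^0$ does not constrain these, so $\mcM^0$ contains stable reducible connections. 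What works instead: for small $\boldsymbol{\alpha}$, stability means every invariant subbundle has degree $\le 0$. An invariant subobject $F$ and $\mc_\beta(F)$ have the same degree, by Proposition~\ref{variation} and the Fuchs relation together with $\sum_i 1/s_i=1$.
\end{itemize}

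\textbf{Right square: the genuine gap.} This is the actual content of the theorem, and neither of your routes delivers it.

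The Picard shortcut presupposes what has to be proved. A priori $\tilde\psi\circ\mc_\beta\circ\tilde\psi^{-1}$ is only a birational map $S_{\boldsymbol{\nu}}\dashrightarrow S_{w(\boldsymbol{\nu})}$ that is regular off $D_{\boldsymbol{\nu}}$. Nothing in your proposal excludes indeterminacy points or contracted curves on $D_{\boldsymbol{\nu}}$, so the uniqueness in Proposition~\ref{actsurf} cannot be invoked. Locating the images of curves such as $\mcE^{(0)}_1$ would itself require the explicit map.

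The explicit route rests on an unproved identification. The input is not a Fuchsian system on $\mcO^{r}$: in the frame $e_1,\dots,e_r$, the matrix $A(z)/(z(z-1))$ has a polynomial part coming from the $\mcO(1)$-summand. So to use Dettweiler--Reiter you must Hecke-modify at $\infty$ before and after. You then have to show the outcome equals $\mbR^1\pi_{2*}\rMDR$ as a lattice, not just generically. This needs two things:
\begin{itemize}
\item the exponent shifted at $\infty$ must not be $\nu^{(2)}_1$, or Proposition~\ref{variation} no longer applies;
\item uniqueness of logarithmic extensions with prescribed non-resonant exponents.
\end{itemize}
Even then you would have to rebuild $\sigma$ and the filtration from scratch.

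\textbf{The missing idea.} The paper presents $E'$ directly in Simpson's model. Since $\mbR^1\pi_{2*}F=0$, $E'$ is the cokernel of $\pi_{2*}F\to\pi_{2*}(F'\otimes\xjzq{1})$. Take the explicit local generators $\eta_k$ and the relations $\nabla_F(e_k)$, $\nabla_F(ye_1)$. This yields a basis of the degree-one bundle $E'$ itself: $[\eta_6],\dots,[\eta_9]$, resp.\ $[\eta_8],\dots,[\eta_{13}]$. In this basis $\nabla^{\GM}_{\tmid}$ already has the sub-diagonal shape $1,z,\dots$ of the normal form. So $\bar q$ and $\bar p$ can be read off directly and compared with (\ref{quadtranf}).
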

The commutativity of the left diagram follows by Proposition \ref{variation}. We can show the commutativity of the right diagram in a same manner of Section 6 in \cite{Ma2}. Here, we outline the proof. For $(E, \nabla, l_*)\in M(\boldsymbol{\nu})$, we set
\begin{align*}
	(F, \nabla_F):=&\pi_1^*(E, \nabla_E)\otimes (\mcO, d+\beta),\\
	(E', \nabla_{E'}):=&\mc_{\beta}(E ,\nabla)=(\mbR^1\pi_{2*}\rMDR, \nabla^{\GM}_{\tmid}).
\end{align*}
We can prove Theorem \ref{MT2} in the following three steps.
\begin{itemize}\setlength{\itemsep}{0cm}
	\item[Step 1.]  Find a local basis of $E'$. 
	\item[Step 2.] Express $\nabla'$ for the above basis. 
	\item[Step 3.] Write down the apparent singularity $\overline{q}$ and the dual parameter $\overline{p}$ by $q$ and $p$.
\end{itemize}
By the definition of $\MDR_{X/Z}$, there is an exact sequence
\[
0 \rightarrow \pi_{2*}F \rightarrow \pi_{2*}(F'\otimes \xjzq{1}) \rightarrow E' \rightarrow \mbR^1\pi_{2*}F.
\]
We can check that $\mbR^1\pi_{2*}F=0$. Hence, $E'$ is isomorphic to the quotient of $\pi_{2*}(F'\otimes \xjzq{1})$ by the image of $\pi_{2*}F$.  In Appendix \ref{mcdata}, we summarize a basis of $\pi_{2*}(F'\otimes \xjzq{1})$ and $E'$, the image of $\pi_{2*}F$, and the matrix form for the basis of $E'$. 

\appendix\section{ Normal form of logarithmic connection}\label{abform}
In this appendix, we give entries of normal forms in Section \ref{normalform} explicitly.  Let $\nu^{(i)}=(\nu^{(i)}_1, \ldots, \nu^{(i)}_{s_i})$  and  
\[
f^{(n)}_m(x_1, \ldots, x_n)=\sum_{1\leq i_1<\cdots<i_m\leq n}x_{i_1}\cdots x_{i_m}.
\]

\begin{itemize}
	
\item$\bm{r}=(2^2, 1^4, 1^4)$
\item[$\circ$] $A(z)$	
\begin{align*}
	a_{10}=&-\nu^{(0)}_{1}\nu^{(0)}_{2},\\
	a_{11}=&-p^2-\tfrac{1}{2}p\nu_0-\tfrac{1}{4}\nu_0^2-\tfrac{1}{2}\nu_0\nu_1-f^{(4)}_2(\nu^{(1)}),\\
	a_{21}=&\tfrac{1}{8}\nu_0^3+\tfrac{1}{4}\nu^2_0\nu_1+\tfrac{1}{2}\nu_0f^{(4)}_2(\nu^{(1)})+f^{(4)}_3(\nu^{(1)})+(\nu_0+\nu_1)(p^2+\tfrac{1}{2}\nu_0p),\\
	(q-1)\mu_1=&(p-\nu^{(1)}_{1})(p-\nu^{(1)}_{2})(p-\nu^{(1)}_{3})(p-\nu^{(1)}_{4}),\\
	a_{1\infty}=&\mu_0-\nu_2-f^{(4)}_2(\nu^{(2)})-1,\\
	a_{2\infty}=&\nu_2\mu_0-f^{(4)}_3(\nu^{(2)}),\\
	a_{3\infty}=&-\mu_0^2+f^{(4)}_2(\nu^{(2)})\mu_0-f^{(4)}_4(\nu^{(2)}),\\
	q\mu_0=&(p+\nu^{(0)}_{1})(p+\nu^{(0)}_{2}).
\end{align*}
\item[$\circ$] $B(w)$	
\begin{align*}
	b_{10}=&-\tilde{p}^2-f^{(4)}_2(\nu^{(2)}),\\
	b_{20}=&-\nu_2\tilde{p}^2-f^{(4)}_3(\nu^{(2)}),\\
	\tilde{q}\lambda_2=&(\tilde{p}+\nu^{(2)}_{1})(\tilde{p}+\nu^{(2)}_{2})(\tilde{p}+\nu^{(2)}_{3})(\tilde{p}+\nu^{(2)}_{4}),\\
	b_{11}=&-\tilde{p}^2-\tfrac{1}{2}\nu_0\tilde{p}-\tfrac{1}{4}\nu_0^2-\tfrac{1}{2}\nu_0\nu_1-f^{(4)}_2(\nu^{(1)})-\nu^{(0)}_{1}\nu^{(0)}_{2}(\tilde{q}-1),\\
	b_{21}=&\tfrac{1}{8}\nu_0^3+\tfrac{1}{4}\nu_0^2\nu_1+\tfrac{1}{2}\nu_0f^{(4)}_2(\nu^{(1)})+f^{(4)}_3(\nu^{(1)})-(\nu_2+1)(\tilde{p}^2+\tfrac{1}{2}\nu_0\tilde{p}+\nu^{(0)}_1\nu^{(0)}_2(\tilde{q}-1))),\\
	(\tilde{q}-1)\lambda_{1}=&(\nu^{(0)}_{1}\nu^{(0)}_{2})^2(\tilde{q}-1)^2+(\tilde{p}-\nu^{(1)}_{1})(\tilde{p}-\nu^{(1)}_{2})(\tilde{p}-\nu^{(1)}_{3})(\tilde{p}-\nu^{(1)}_{4})\\
	&+(2\tilde{p}^2-\nu_1\tilde{p}+\tfrac{1}{4}\nu_0^2+\tfrac{1}{2}\nu_0\nu_1+f^{(4)}_2(\nu^{(1)}))\nu^{(0)}_{1}\nu^{(0)}_{2}(\tilde{q}-1),\\
	b_{1\infty}=&-(1+\nu^{(0)}_{1})(1+\nu^{(0)}_{2}).
\end{align*}
	
\item $\bm{r}=(3^2, 2^3, 1^6)$
\item[$\circ$] $A(z)$	
\begin{align*}
	a_1(z)=&a_{1\infty}z(z-1)+a_{11}z-a_{10}(z-1),\\
	a_2(z)=&z(a_{2\infty}(z-1)+a_{21}),\\
	a_{10}=&-\nu_{00}\nu_{01},\\
	a_{11}=&-\tfrac{4}{9}\nu_0^2-\nu_0\nu_1-\tfrac{1}{2}\nu_1^2-f_2(\nu^{(1)}),\\
	a_{21}=&\tfrac{1}{108}(4\nu_0+3\nu_1)(8\nu_0^2+18\nu_0\nu_1+9\nu_1^2+18f_2(\nu^{(1)}))+f_3(\nu^{(1)}),\\
	a_{1\infty}=&-\nu_2-f_2(\nu^{(2)})+a_{6}-a_{7}+a_9-1,\\
	a_{2\infty}=&(a_{6}-a_{7}+a_9)\nu_2+f_3(\nu^{(2)})-a_{8},\\
	a_{3}=&a_{6}a_9-(1+\nu_2+a_{1\infty})(a_{6}-a_{7}+a_9)-\nu_2a_8-f_4(\nu^{(2)}),\\
	a_{4}=&(1+\nu_2+a_{1\infty})a_8+a_{2\infty}(a_{7}-a_9)+\nu_2a_6a_9-f_5(\nu^{(2)}),\\
	a_{5}=&a_{2\infty}a_8-(1+\nu_2+a_{1\infty})a_6a_9-f_6(\nu^{(2)}),\\
	a_{6}=&\tfrac{2}{9}\nu_0^2+\tfrac{1}{2}\nu_0\nu_1+\tfrac{1}{4}\nu_1^2+\nu^{(0)}_1\nu^{(0)}_2,\\
	a_{7}=&-p^2-\tfrac{1}{3}\nu_0p-\tfrac{1}{6}\nu_0\nu_1-\tfrac{1}{9}\nu_0^2-f^{(3)}_2(\nu^{(1)}),\\
	(q-1)a_{8}=&(p-\nu^{(1)}_{1})(p-\nu^{(1)}_{1})(p-\nu^{(1)}_{2}),\\
	qa_9=&(p+\nu^{(0)}_{1})(p+\nu^{(0)}_{2}).
\end{align*}
\item[$\circ$] $B(w)$	
\begin{align*}
	b_1(w)=&b_{1\infty}w(w-1)+b_{11}w-b_{10}(w-1)\\
	b_i(w)=&b_{i1}w-b_{i0}(w-1)\quad (i>1)\\
	b_{10}=&-\tilde{p}^2-f^{(6)}_2(\nu^{(2)}),\\
	b_{20}=&-\nu_2\tilde{p}^2-f^{(6)}_3(\nu^{(2)}),\\
	b_{30}=&\tilde{p}^4+f^{(6)}_2(\nu^{(2)})\tilde{p}^2-f^{(6)}_4(\nu^{(2)}),\\
	b_{40}=&\nu_2\tilde{p}^4+f^{(6)}_3(\nu^{(2)})\tilde{p}^2-f^{(6)}_5(\nu^{(2)}),\\
	\tilde{q}\lambda_2=&(\tilde{p}+\nu^{(2)}_{1})(\tilde{p}+\nu^{(2)}_{2})(\tilde{p}+\nu^{(2)}_{3})(\tilde{p}+\nu^{(2)}_{4})(\tilde{p}+\nu^{(2)}_{5})(\tilde{p}+\nu^{(2)}_{6}),\quad (\lambda_2=b_{50})\\
	b_{11}=&-\tfrac{4}{9}\nu_0^2-\nu_0\nu_1-\tfrac{1}{2}\nu_1^2-f^{(3)}_2(\nu^{(1)}),\\
	b_{21}=&\tfrac{1}{6}(4+\nu_1+4\nu_2)b_{11}+f^{(3)}_3(\nu^{(1)}),\\
	b_{31}=&-(b_{11}-b_9+(\tfrac{1}{3}\nu_0+\tfrac{1}{2}\nu_1)^2)b_6-(\tilde{p}+\tfrac{2}{3}\nu_0+\tfrac{1}{2}\nu_1)b_7-(\tilde{q}-1)b_8\\
	b_{41}=&-\left((\nu_0+\nu_1)b_9+(\tfrac{1}{3}\nu_0+\tfrac{1}{2}\nu_1-\nu^{(1)}_{1})(\tfrac{1}{3}\nu_0+\tfrac{1}{2}\nu_1-\nu^{(1)}_{2})(\tfrac{1}{3}\nu_0+\tfrac{1}{2}\nu_1-\nu^{(1)}_{3})\right)b_6\\
	&+(\nu_0+\nu_1)(\tilde{p}+\tfrac{2}{3}\nu_0+\tfrac{1}{2}\nu_1)b_7+(\nu_0+\nu_1)(\tilde{q}-1)b_8\\
	b_{5}=&((\tilde{p}-\tfrac{1}{3}\nu_0-\tfrac{1}{2}\nu_1)\lambda_1-\nu^{(0)}_1\nu^{(0)}_2b_9)b_6\\
	&+(\lambda_1+\nu^{(0)}_1\nu^{(0)}_2(\nu_0+\nu_1))b_7-(b_9+(\nu_0+\nu_1)(\tilde{p}-\tfrac{1}{3}\nu_0-\tfrac{1}{2}\nu_1))b_8\\
	(\tilde{q}-1)\lambda_{1}=&\nu^{(0)}_{1}\nu^{(0)}_{2}(\tilde{q}-1)(\tilde{p}-\tfrac{1}{3}\nu_0-\nu_1)+(\tilde{p}-\nu^{(1)}_{1})(\tilde{p}-\nu^{(1)}_{2})(\tilde{p}-\nu^{(1)}_{3}),\\
	b_{6}=&\tfrac{2}{9}\nu_0^2+\tfrac{1}{2}\nu_0\nu_1+\tfrac{1}{4}\nu_1^2+\nu^{(0)}_1\nu^{(0)}_2\\
	b_7=&-\lambda_{1}+\tfrac{1}{3}\nu_0b_9\\
	b_8=&\nu^{(0)}_1\nu^{(0)}_2b_9\\
	b_{9}=&-\tilde{p}^2-\tfrac{1}{3}\nu_0\tilde{p}-\tfrac{1}{6}\nu_0\nu_1-\tfrac{1}{9}\nu_0^2-f^{(3)}_2(\nu^{(1)})-\nu^{(0)}_1\nu^{(0)}_2(\tilde{q}-1)
\end{align*}
\end{itemize}

\section{The Picard group of the spaces of initial conditions for  the difference Painlev\'e equations of type $R^\perp$}
\subsection{The Picard group and the root lattice}\label{picbasis}
\begin{itemize}
\item $R=A^{(1)*}_1, R^\perp=E^{(1)}_7$: 

\item[$\circ$] $\Pic X_{\boldsymbol{\nu}}\cong \mbZ H_1\oplus \mbZ H_2 \oplus \bigoplus_{i=1}^8 \mbZ \mcF_i$
\[
H_i^2=0, \quad \mcF_j^2=-1, \quad  H_1.H_2=1, \quad H_1. \mcF_j=H_2. \mcF_j=\mcF_i. \mcF_j=0\ (i \neq j)
\]
\item[$\circ$] $Q(E^{(1)}_7)=\bigoplus_{i=0}^7 \mbZ \alpha_i$
\[
\begin{array}{llll}
\alpha_0=H_1-H_2, & \alpha_1=\mcF_3-\mcF_4, & \alpha_2=\mcF_2-\mcF_3, & \alpha_3=\mcF_1-\mcF_2, \\
 \alpha_4=H_2-\mcF_1-\mcF_5,& \alpha_5=\mcF_5-\mcF_6, & \alpha_6=\mcF_6-\mcF_7, & \alpha_7=\mcF_7-\mcF_8.
\end{array}
\]

\item[$\circ$]$\Pic S_{\boldsymbol{\nu}}\cong \mbZ H \oplus \bigoplus_{i=0}^{2}\bigoplus_{j=1}^{s_i} \mbZ \mcE^{(i)}_j$
\[
H_i=\left\{
\begin{array}{ll}
	H-\mcE^{(0)}_2&i=1\\H-\mcE^{(0)}_1&i=2,
\end{array}\right. \quad 
\mcF_j=\left\{
\begin{array}{ll}
	\mcE^{(1)}_{j}&j=1, \cdots, 4\\\mcE^{(2)}_{j-4}&j=5, \cdots, 8
\end{array}\right.
\]
\end{itemize}
\begin{itemize}
\item $R=A^{(1)**}_0, R^\perp=E^{(1)}_8$: 
\item[$\circ$]$\Pic X\cong \bigoplus_{i=0}^9 \mbZ \mcG_i$
\[
\mcG_0^2=1, \quad \mcG_i^2=-1\ (i=1, \ldots, 9), \quad \mcG_i. \mcG_j=0\ (i \neq j)
\]
\item[$\circ$] $Q(E^{(1)}_8)=\bigoplus_{i=0}^8 \mbZ \alpha_i$
\[
\alpha_0=\mcG_8-\mcG_9, \quad \alpha_i=\mcG_i-\mcG_{i+1} \ (i=1, \ldots ,7), \quad \alpha_8=\mcG_0-\mcG_1-\mcG_2-\mcG_3
\]
\item[$\circ$]$\Pic S_{\boldsymbol{\nu}}\cong \mbZ H \oplus \bigoplus_{i=0}^{2}\bigoplus_{j=1}^{s_i} \mbZ \mcE^{(i)}_j$
\[
\mcG_i=\left\{
\begin{array}{ll}
3H-2\mcE^{(0)}_1-\mcE^{(0)}_2-\mcE^{(1)}_1-\mcE^{(1)}_2-\mcE^{(1)}_3&i=0, \\ 
H-\mcE^{(0)}_1-\mcE^{(1)}_{4-i}&i=1, 2, 3, \\
\mcE^{(2)}_{i-3} &i=4, \ldots, 9.
\end{array}\right.
\]
\end{itemize}

\subsection{Periods of the basis of $Q(R^\perp)$}

\begin{itemize}
\item $R=A^{(1)*}_1, R^\perp=E^{(1)}_7$: 
\[
\begin{array}{lll}
	\chi(\alpha_0)=\nu^{(0)}_2-\nu^{(0)}_1
	&\chi(\alpha_1)=\nu^{(1)}_4-\nu^{(1)}_3, 
	& \chi(\alpha_2)=\nu^{(1)}_3-\nu^{(1)}_2, \\
	\chi(\alpha_3)=\nu^{(1)}_2-\nu^{(1)}_1, 
	&\chi(\alpha_4)=\nu^{(0)}_1+\nu^{(1)}_1+\nu^{(2)}_1,
	& \chi(\alpha_5)=\nu^{(2)}_2-\nu^{(2)}_1, \\
	 \chi(\alpha_6)=\nu^{(2)}_3-\nu^{(2)}_2, 
	&\chi(\alpha_7)=\nu^{(2)}_4-\nu^{(2)}_3,
\end{array}
\]
\end{itemize}
\begin{itemize}
\item $R=A^{(1)**}_0, R^\perp=E^{(1)}_8$: 
\[
\begin{array}{lll}
	\chi(\alpha_0)=\nu^{(2)}_6-\nu^{(2)}_5
	&\chi(\alpha_1)=\nu^{(1)}_3-\nu^{(1)}_2, 
	& \chi(\alpha_2)=\nu^{(1)}_2-\nu^{(1)}_1, \\
	\chi(\alpha_3)=\nu^{(0)}_1+\nu^{(1)}_1+\nu^{(2)}_1,
	&\chi(\alpha_4)=\nu^{(2)}_2-\nu^{(2)}_1, 
	& \chi(\alpha_5)=\nu^{(2)}_3-\nu^{(2)}_2, \\
	\chi(\alpha_6)=\nu^{(2)}_4-\nu^{(2)}_3, 
	&\chi(\alpha_7)=\nu^{(2)}_5-\nu^{(2)}_4,
	&\chi(\alpha_8)=\nu^{(1)}_2-\nu^{(1)}_1,
\end{array}
\]
\end{itemize}

\section{The action of $\widetilde{W}(R^\perp)$}

\subsection{The action on $\Pic(X_{\boldsymbol{\nu}})$}\label{actpicX}

\begin{itemize}
\item $R=A^{(1)*}_1, R^\perp=E^{(1)}_7$: $\Pic X\cong \mbZ H_1\oplus \mbZ H_2 \oplus \bigoplus_{i=1}^8 \mbZ \mcF_i$
\[
\begin{array}{ll}
w_0(\mcF)=
\left\{
\begin{array}{ll}
H_{(12)(i)} & \mcF=H_i\\\mcF_j &\mcF=\mcF_j,
\end{array}
\right. &
w_1(\mcF)=
\left\{
\begin{array}{ll}
H_{i} & \mcF=H_i\\\mcF_{(34)(j)} &\mcF=\mcF_j,
\end{array}
\right.\\
w_2(\mcF)=
\left\{
\begin{array}{ll}
H_{i} & \mcF=H_i\\\mcF_{(23)(j)} &\mcF=\mcF_j,
\end{array}
\right. &
w_3(\mcF)=
\left\{
\begin{array}{ll}
H_{i} & \mcF=H_i\\\mcF_{(12)(j)} &\mcF=\mcF_j,
\end{array}
\right.\\
w_5(\mcF)=
\left\{
\begin{array}{ll}
H_i & \mcF=H_i\\\mcF_{(56)(j)} &\mcF=\mcF_j,
\end{array}
\right. &
w_6(\mcF)=
\left\{
\begin{array}{ll}
H_{i} & \mcF=H_i\\\mcF_{(67)(j)} &\mcF=\mcF_j,
\end{array}
\right.\\
w_7(\mcF)=
\left\{
\begin{array}{ll}
H_{i} & \mcF=H_i\\\mcF_{(78)(j)} &\mcF=\mcF_j,
\end{array}
\right. &
\sigma(\mcF)=
\left\{
\begin{array}{ll}
H_{i} & \mcF=H_i\\\mcF_{(15)(26)(37)(48)(j)} &\mcF=\mcF_j,
\end{array}
\right.
\end{array}
\]
\[
w_4(\mcF)=
\left\{
\begin{array}{ll}
H_1+H_2-\mcF_1-\mcF_5 & \mcF=H_1\\
H_2-\mcF_5 &\mcF=\mcF_1\\
H_2-\mcF_1 &\mcF=\mcF_5\\
\mcF & \mcF\neq H_1, \mcF_1, \mcF_5,
\end{array}
\right.
\]
\end{itemize}
\begin{itemize}
\item $R=A^{(1)**}_0, R^\perp=E^{(1)}_8$: $\Pic X\cong \bigoplus_{i=0}^9 \mbZ \mcG_i$
\[
w_i(\mcG_j)=\mcG_{(i\ i+1)(j)}, \quad (i=1, \ldots, 7), \quad w_0(\mcG_j)=\mcG_{(89)(j)}
\]
\[
w_8(\mcG_j)=
\left\{
\begin{array}{ll}
2\mcG_0-\mcG_1-\mcG_2-\mcG_3 &j=0\\
\mcG_0-\mcG_2-\mcG_3 &j=1\\
\mcG_0-\mcG_1-\mcG_3 &j=2\\
\mcG_0-\mcG_1-\mcG_2 &j=3\\
\mcG_j &j\neq 0, 1,2,3
\end{array}
\right.
\]
\end{itemize}

\subsection{The action on $\Pic (S_{\boldsymbol{\nu}})$}\label{actpicS}

\begin{itemize}
	\item $R=A^{(1)*}_1, R^\perp=E^{(1)}_7$: $\Pic S\cong \mbZ H \oplus \bigoplus_{i=0}^{2}\bigoplus_{j=1}^{s_i} \mbZ \mcE^{(i)}_j$
	\[
	w_i(H)=\left\{
	\begin{array}{ll}
		H&i\neq 4, \\2H-\mcE^{(0)}_1-\mcE^{(1)}_1-\mcE^{(2)}_1 &i=4
	\end{array}\right.
	\]
	\[
	\begin{array}{ll}
		w_0(\mcE^{(i)}_j)=
		\left\{
		\begin{array}{ll}
			\mcE^{(i)}_{(12)(j)}&i=0, \\\mcE^{(i)}_j&i\neq 0
		\end{array}\right.&
		w_1(\mcE^{(i)}_j)=
		\left\{
		\begin{array}{ll}
			\mcE^{(i)}_{(34)(j)}&i=1, \\\mcE^{(i)}_j&i\neq 1
		\end{array}\right.\\
		w_2(\mcE^{(i)}_j)=
		\left\{
		\begin{array}{ll}
			\mcE^{(i)}_{(23)(j)}&i=1, \\\mcE^{(i)}_j&i\neq 1
		\end{array}\right. &
		w_3(\mcE^{(i)}_j)=
		\left\{
		\begin{array}{ll}
			\mcE^{(i)}_{(12)(j)}&i=1, \\\mcE^{(i)}_j&i\neq 1
		\end{array}\right. \\
		w_5(\mcE^{(i)}_j)=
		\left\{
		\begin{array}{ll}
			\mcE^{(i)}_{(12)(j)}&i=2, \\\mcE^{(i)}_j&i\neq 2
		\end{array}\right.&
		w_6(\mcE^{(i)}_j)=
		\left\{
		\begin{array}{ll}
			\mcE^{(i)}_{(23)(j)}&i=2, \\\mcE^{(i)}_j&i\neq 2
		\end{array}\right.\\
		w_7(\mcE^{(i)}_j)=
		\left\{
		\begin{array}{ll}
			\mcE^{(i)}_{(34)(j)}&i=2, \\\mcE^{(i)}_j&i\neq 2
		\end{array}\right.&
		\sigma(\mcE^{(i)}_j)=\mcE^{(12)(i)}_j
	\end{array}
	\]
	\[
	w_4(\mcE^{(i)}_j)=
	\left\{
	\begin{array}{ll}
		H-\mcE^{(1)}_1-\mcE^{(2)}_1 &i=0, j=1\\
		H-\mcE^{(0)}_1-\mcE^{(2)}_1&i=1, j=1\\
		H-\mcE^{(0)}_1-\mcE^{(1)}_1&i=2, j=1\\
		\mcE^{(i)}_j & j\neq 1
	\end{array}
	\right.
	\]
\end{itemize}
\begin{itemize}
	\item $R=A^{(1)**}_0, R^\perp=E^{(1)}_8$: $\Pic S\cong \mbZ H \oplus \bigoplus_{i=0}^{2}\bigoplus_{j=1}^{s_i} \mbZ \mcE^{(i)}_j$
	\[
	w_i(H)=\left\{
	\begin{array}{ll}
		H&i\neq 3, \\2H-\mcE^{(0)}_1-\mcE^{(1)}_1-\mcE^{(2)}_1 &i=3
	\end{array}\right.
	\]
	\[
	\begin{array}{ll}
		w_0(\mcE^{(i)}_j)=
		\left\{
		\begin{array}{ll}
			\mcE^{(i)}_{(56)(j)}&i=2, \\\mcE^{(i)}_j&i\neq 2
		\end{array}\right.&
		w_1(\mcE^{(i)}_j)=
		\left\{
		\begin{array}{ll}
			\mcE^{(i)}_{(23)(j)}&i=1, \\\mcE^{(i)}_j&i\neq 1
		\end{array}\right.\\
		w_2(\mcE^{(i)}_j)=
		\left\{
		\begin{array}{ll}
			\mcE^{(i)}_{(12)(j)}&i=1, \\\mcE^{(i)}_j&i\neq 1
		\end{array}\right. &
		w_4(\mcE^{(i)}_j)=
		\left\{
		\begin{array}{ll}
			\mcE^{(i)}_{(12)(j)}&i=2, \\\mcE^{(i)}_j&i\neq 2
		\end{array}\right. \\
		w_5(\mcE^{(i)}_j)=
		\left\{
		\begin{array}{ll}
			\mcE^{(i)}_{(23)(j)}&i=2, \\\mcE^{(i)}_j&i\neq 2
		\end{array}\right.&
		w_6(\mcE^{(i)}_j)=
		\left\{
		\begin{array}{ll}
			\mcE^{(i)}_{(34)(j)}&i=2, \\\mcE^{(i)}_j&i\neq 2
		\end{array}\right.\\
		w_7(\mcE^{(i)}_j)=
		\left\{
		\begin{array}{ll}
			\mcE^{(i)}_{(45)(j)}&i=2, \\\mcE^{(i)}_j&i\neq 2
		\end{array}\right.&
		w_8(\mcE^{(i)}_j)=
		\left\{
		\begin{array}{ll}
			\mcE^{(i)}_{(12)(j)}&i=0, \\\mcE^{(i)}_j&i\neq 0
		\end{array}\right.
	\end{array}
	\]
	\[
	w_3(\mcE^{(i)}_j)=
	\left\{
	\begin{array}{ll}
		H-\mcE^{(1)}_1-\mcE^{(2)}_1 &i=0, j=1\\
		H-\mcE^{(0)}_1-\mcE^{(2)}_1&i=1, j=1\\
		H-\mcE^{(0)}_1-\mcE^{(1)}_1&i=2, j=1\\
		\mcE^{(i)}_j & j\neq 1
	\end{array}
	\right.
	\]
\end{itemize}

\subsection{The action on $\mcN$}\label{actionN0}
We denote the reflection corresponding to the node $i$ by $w_i$.
\begin{itemize}
\item $R=A^{(1)*}_1, R^\perp=E^{(1)}_7$:  $\gamma=\nu^{(0)}_{1}+\nu^{(1)}_{1}+\nu^{(2)}_{1}=\chi(\alpha_4)$  
\[
\begin{array}{ll}
	w_0(\nu^{(i)}_j)=
	\left\{
	\begin{array}{ll}
		\nu^{(i)}_{(12)(j)}&i=0, \\\nu^{(i)}_j&i\neq 0
	\end{array}\right.&
	w_1(\nu^{(i)}_j)=
	\left\{
	\begin{array}{ll}
		\nu^{(i)}_{(34)(j)}&i=1, \\\nu^{(i)}_j&i\neq 1
	\end{array}\right.\\
	w_2(\nu^{(i)}_j)=
	\left\{
	\begin{array}{ll}
		\nu^{(i)}_{(23)(j)}&i=1, \\\nu^{(i)}_j&i\neq 1
	\end{array}\right. &
	w_3(\nu^{(i)}_j)=
	\left\{
	\begin{array}{ll}
		\nu^{(i)}_{(12)(j)}&i=1, \\\nu^{(i)}_j&i\neq 1
	\end{array}\right. \\
	w_4(\nu^{(i)}_j)=
	\left\{
	\begin{array}{cc}
		\nu^{(i)}_{j}-\frac{s_i-1}{s_i}\gamma & j=1\\
		\nu^{(i)}_{j}+\frac{1}{s_i}\gamma &  j\neq 1\\
	\end{array}
	\right.&
	w_5(\nu^{(i)}_j)=
	\left\{
	\begin{array}{ll}
		\nu^{(i)}_{(12)(j)}&i=2, \\\nu^{(i)}_j&i\neq 2
	\end{array}\right.\\
	w_6(\nu^{(i)}_j)=
	\left\{
	\begin{array}{ll}
		\nu^{(i)}_{(23)(j)}&i=2, \\\nu^{(i)}_j&i\neq 2
	\end{array}\right.&
	w_7(\nu^{(i)}_j)=
	\left\{
	\begin{array}{ll}
		\nu^{(i)}_{(34)(j)}&i=2, \\\nu^{(i)}_j&i\neq 2
	\end{array}\right.\\
	\sigma(\nu^{(i)}_{j})=
	\left\{
	\begin{array}{cc}
		\nu^{(0)}_{j} & i=0\\
		\nu^{(2)}_{j}+\frac{1}{4}(\nu_1-\nu_2) & i=1\\
		\nu^{(1)}_{j}+\frac{1}{4}(\nu_2-\nu_1) & i=2,
	\end{array}
	\right. 
\end{array}
\]
\end{itemize}
\begin{itemize}
\item $R=A^{(1)**}_0, R^\perp=E^{(1)}_8$: 
$\gamma=\nu^{(0)}_{1}+\nu^{(1)}_{1}+\nu^{(2)}_{1}=\chi(\alpha_3)$  
\[
\begin{array}{ll}
	w_0(\nu^{(i)}_j)=
	\left\{
	\begin{array}{ll}
		\nu^{(i)}_{(56)(j)}&i=2, \\\nu^{(i)}_j&i\neq 2
	\end{array}\right.&
	w_1(\nu^{(i)}_j)=
	\left\{
	\begin{array}{ll}
		\nu^{(i)}_{(23)(j)}&i=1, \\\nu^{(i)}_j&i\neq 1
	\end{array}\right.\\
	w_2(\nu^{(i)}_j)=
	\left\{
	\begin{array}{ll}
		\nu^{(i)}_{(12)(j)}&i=1, \\\nu^{(i)}_j&i\neq 1
	\end{array}\right. &
	w_3(\nu^{(i)}_j)=
	\left\{
	\begin{array}{cc}
		\nu^{(i)}_{j}-\frac{s_i-1}{s_i}\gamma & j=1\\
		\nu^{(i)}_{j}+\frac{1}{s_i}\gamma &  j\neq 1\\
	\end{array}
	\right.\\
	w_4(\nu^{(i)}_j)=
	\left\{
	\begin{array}{ll}
		\nu^{(i)}_{(12)(j)}&i=2, \\\nu^{(i)}_j&i\neq 2
	\end{array}\right. &
	w_5(\nu^{(i)}_j)=
	\left\{
	\begin{array}{ll}
		\nu^{(i)}_{(23)(j)}&i=2, \\\nu^{(i)}_j&i\neq 2
	\end{array}\right.\\
	w_6(\nu^{(i)}_j)=
	\left\{
	\begin{array}{ll}
		\nu^{(i)}_{(34)(j)}&i=2, \\\nu^{(i)}_j&i\neq 2
	\end{array}\right.&
	w_7(\nu^{(i)}_j)=
	\left\{
	\begin{array}{ll}
		\mcE^{(i)}_{(45)(j)}&i=2, \\\nu^{(i)}_j&i\neq 2
	\end{array}\right.\\
	w_8(\nu^{(i)}_j)=
	\left\{
	\begin{array}{ll}
		\nu^{(i)}_{(12)(j)}&i=0, \\\nu^{(i)}_j&i\neq 0
	\end{array}\right.
\end{array}
\]
\end{itemize}
\subsection{The action on $\mcS$}
See Section \ref{actS}. 
\section{Realization of affine Weyl group symmetry}

\subsection{Date for middle convolution}\label{apprealize}
\begin{itemize}
\item $R=A^{(1)*}_1, R^\perp=E^{(1)}_7$: 
\item[$\circ$] $f_{w_0}(E, \nabla, l_*)=(E, \nabla, l'_*)$
\[
l'^{(i)}_{j}:=
\left\{
\begin{array}{cc}
\Ker(\res_0\nabla-\nu^{(0)}_{1}\id\colon E|_{0}\longrightarrow E|_{0})& (i, j)=(0, 1)\\
l^{(i)}_{j}& (i, j)\neq (0, 1)
\end{array}
\right.
\]
\item[$\circ$] $f_{w_k}(E, \nabla, l_*)=(E, \nabla, l'_*)$ for $k=1, 2, 3$
\[
l'^{(i)}_{j}:=
\left\{
\begin{array}{cc}
	\Ker( E|_{1}\rightarrow E|_{1}/l^{(1)}_{5-k}\overset{\res_1\nabla-\nu^{(1)}_{4-k}\id}{\longrightarrow} E|_{1}/l^{(1)}_{5-k})& (i, j)=(1, 4-k)\\
	l^{(i)}_{j}& (i, j)\neq (1, 4-k)
\end{array}
\right.
\]
\item[$\circ$] $f_{w_4}(E, \nabla, l_*)=(\mc_{\beta}(E, \nabla), l'_*)$, see Section \ref{realizecn}
\item[$\circ$] $f_{w_k}(E, \nabla, l_*)=(E, \nabla, l'_*)$ for $k=5, 6, 7$
\[
l'^{(i)}_{j}:=
\left\{
\begin{array}{cc}
	\Ker( E|_{\infty}\rightarrow E|_{\infty}/l^{(2)}_{k-3}\overset{\res_\infty\nabla-\nu^{(2)}_{k-4}\id}{\longrightarrow} E|_{\infty}/l^{(2)}_{k-3})& (i, j)=(2, k-4)\\
	l^{(i)}_{j}& (i, j)\neq (2, k-4)
\end{array}
\right.
\]
\item[$\circ$] $f_{\sigma}(E, \nabla, l_*)=(s^*(E, \nabla), l'_*)$, see Section \ref{realizeaut}
\item $R=A^{(1)**}_0, R^\perp=E^{(1)}_8$: 
\item[$\circ$] $f_{w_0}(E, \nabla, l_*)=(E, \nabla, l'_*)$
\[
l'^{(i)}_{j}:=
\left\{
\begin{array}{cc}
	\Ker(\res_\infty\nabla-\nu^{(2)}_{5}\id\colon E|_{2}\longrightarrow E|_{2})& (i, j)=(0, 5)\\
	l^{(i)}_{j}& (i, j)\neq (0, 1)
\end{array}
\right.
\]
\item[$\circ$] $f_{w_k}(E, \nabla, l_*)=(E, \nabla, l'_*)$ for $k=1, 2$
\[
l'^{(i)}_{j}:=
\left\{
\begin{array}{cc}
	\Ker( E|_{1}\rightarrow E|_{1}/l^{(1)}_{4-k}\overset{\res_1\nabla-\nu^{(1)}_{3-k}\id}{\longrightarrow} E|_{1}/l^{(1)}_{4-k})& (i, j)=(1, 3-k)\\
	l^{(i)}_{j}& (i, j)\neq (1, 3-k)
\end{array}
\right.
\]
\item[$\circ$] $f_{w_3}(E, \nabla, l_*)=(\mc_{\beta}(E, \nabla), l'_*)$, see Section \ref{realizecn}
\item[$\circ$] $f_{w_k}(E, \nabla, l_*)=(E, \nabla, l'_*)$ for $k=4, 5, 6, 7$
\[
l'^{(i)}_{j}:=
\left\{
\begin{array}{cc}
	\Ker( E|_{\infty}\rightarrow E|_{\infty}/l^{(2)}_{k-2}\overset{\res_\infty\nabla-\nu^{(2)}_{k-3}\id}{\longrightarrow} E|_{\infty}/l^{(2)}_{k-2})& (i, j)=(2, k-3)\\
	l^{(i)}_{j}& (i, j)\neq (2, k-3)
\end{array}
\right.
\]
\item[$\circ$] $f_{w_8}(E, \nabla, l_*)=(E, \nabla, l'_*)$
\[
l'^{(i)}_{j}:=
\left\{
\begin{array}{cc}
	\Ker(\res_0\nabla-\nu^{(0)}_{1}\id\colon E|_{0}\longrightarrow E|_{0})& (i, j)=(0, 1)\\
	l^{(i)}_{j}& (i, j)\neq (0, 1)
\end{array}
\right.
\]
\end{itemize}
\subsection{Date for middle convolution}\label{mcdata}
For $\eta \in \pi_{2*}(F'\otimes \xjzq{1})$, Let $[\eta]$ be the image of $\eta$ by $\pi_{2*}(F'\otimes \xjzq{1})\rightarrow E'$.
\begin{itemize}
\item $R=A^{(1)*}_1, R^\perp=E^{(1)}_7$: 
\item[$\circ$] a basis of $\pi_{2*}(F'\otimes \xjzq{1})$ on $U_0$
\begin{align*}
\eta_1=&\nu^{(0)}_1e_1\otimes\tfrac{dy}{y}+e_2\otimes(\tfrac{dy}{y}-\tfrac{dy}{y-z}), \\
\eta_2=&((p+\nu^{(0)}_2)e_3+qe_4)\otimes(\tfrac{dy}{y}-\tfrac{dy}{y-z}), \\
\eta_3=&-\nu^{(1)}_1e_1\otimes\tfrac{dy}{y-1}+e_2\otimes(\tfrac{dy}{y-1}-\tfrac{dy}{y-z}), \\
\eta_4=&a_{11}e_1\otimes\tfrac{dy}{y-1}+((\tfrac{1}{2}\nu_0+\nu_1-\nu^{(1)}_1)e_2+e_3)\otimes(\tfrac{dy}{y-1}-\tfrac{dy}{y-z}), \\
\eta_5=&(p-\nu^{(1)}_2)(p-\nu^{(1)}_3)(p-\nu^{(1)}_4)e_1\otimes\tfrac{dy}{y-1}+(q-1)e_4\otimes(\tfrac{dy}{y-1}-\tfrac{dy}{y-z}), \\
\eta_6=&(\nu^{(2)}_1+1)e_1\otimes dy+e_2\otimes \tfrac{dy}{y-z},\\
\eta_7=&a_{1\infty}e_1\otimes dy+((\nu^{(2)}_1-\nu_2-1)e_2+e_3)\otimes\tfrac{dy}{y-z}, \\
\eta_8=&a_{2\infty}e_1\otimes dy+ (\nu^{(2)}_1e_3+e_4)\otimes \tfrac{dy}{y-z},\\
\eta_9=&e_1\otimes \tfrac{dy}{y-z}
\end{align*}
\item[$\circ$] a basis of $\pi_{2*}(F)$ on $U_0$: $e_1, e_2, e_3, ye_1$
\item[$\circ$] the image of the basis of $\pi_{2*}(F)$ 
\begin{align*}
\nabla_F(e_1)=&-\eta_1+\eta_3+\gamma \eta_9\\
\nabla_F(e_2)=&\nu^{(0)}_2\eta_1+\eta_4+\eta_7\\
\nabla_F(e_3)=&\eta_2+(\tfrac{1}{2}\nu_0+\nu_1-\nu^{(1)}_1)(p+\tfrac{1}{2}\nu_0+\nu^{(1)}_1)\eta_3-(p+\tfrac{1}{2}\nu_0+\nu^{(1)}_1)\eta_4-\eta_5+\eta_8\\
\nabla_F(e_4)=&-\tfrac{p+\nu^{(0)}_1}{q}\eta_2+\tfrac{p-\nu^{(1)}_1}{q-1}\eta_5+(\nu^{(2)}_1-\nu_2-1)(\mu_0+(\nu^{(2)}_1)^2)\eta_6\\
&-(\mu_0+(\nu^{(2)}_1)^2)\eta_7+\nu^{(2)}_1\eta_8\\
\nabla_F(ye_1)=&\eta_3+\eta_6+\gamma z \eta_9
\end{align*}
\item[$\circ$] a basis of $E'$ on $U_0$: $[\eta_6], \ldots, [\eta_{9}]$
\[
\nabla'([\eta_9], [\eta_6], [\eta_7], [\eta_8])=
([\eta_9], [\eta_6], [\eta_7], [\eta_8])
\begin{pmatrix}
*&*&\cdots &*\\1&*&\ddots&\vdots\\0&z&\ddots&\vdots\\0&0&*&*
\end{pmatrix}\frac{dz}{z(z-1)}
\]
\end{itemize}
\begin{itemize}
\item $R=A^{(1)**}_0, R^\perp=E^{(1)}_8$: 
\item[$\circ$] a basis of $\pi_{2*}(F'\otimes \xjzq{1})$
\begin{align*}
	\eta_1=&\nu^{(0)}_1e_1\otimes\tfrac{dy}{y}+e_2\otimes(\tfrac{dy}{y}-\tfrac{dy}{y-z}), \\
	\eta_2=&((\tfrac{2}{3}\nu_0+\tfrac{1}{2}\nu_1-\nu^{(0)}_1)e_3+e_4)\otimes(\tfrac{dy}{y}-\tfrac{dy}{y-z}), \\
	\eta_3=&((p+\nu^{(0)}_2)e_5+qe_6)\otimes(\tfrac{dy}{y}-\tfrac{dy}{y-z}), \\
	\eta_4=&-\nu^{(1)}_1e_1\otimes\tfrac{dy}{y-1}+e_2\otimes(\tfrac{dy}{y-1}-\tfrac{dy}{y-z}), \\
	\eta_5=&a_{11}e_1\otimes\tfrac{dy}{y-1}+((\tfrac{2}{3}\nu_0+\nu_1-\nu^{(1)}_1)e_2+e_3)\otimes(\tfrac{dy}{y-1}-\tfrac{dy}{y-z}), \\
	\eta_6=&((\tfrac{1}{3}\nu_0+\tfrac{1}{2}\nu_1-\nu^{(1)}_1)e_4+e_5)\otimes(\tfrac{dy}{y}-\tfrac{dy}{y-z}), \\
	\eta_7=&((p-\nu^{(1)}_2)(p-\nu^{(2)}_2)e_5+(q-1)e_6)\otimes(\tfrac{dy}{y-1}-\tfrac{dy}{y-z}), \\
	\eta_8=&(\nu^{(2)}_1+1)e_1\otimes dy+e_2\otimes \tfrac{dy}{y-z},\\
	\eta_9=&a_{1\infty}e_1\otimes dy+((\nu^{(2)}_1-\nu_2-1)e_2+e_3)\otimes\tfrac{dy}{y-z}, \\
	\eta_{10}=&a_{2\infty}e_1\otimes dy+ (\nu^{(2)}_1e_3+e_4)\otimes \tfrac{dy}{y-z},\\
	\eta_{11}=&a_{3}e_1\otimes dy+ (-a_6e_3+\nu^{(2)}_1e_4+e_5)\otimes \tfrac{dy}{y-z},\\
	\eta_{12}=&a_{4}e_1\otimes dy+ (a_7e_4+\nu^{(2)}_1e_5+e_6)\otimes \tfrac{dy}{y-z},\\
	\eta_{13}=&e_1\otimes \tfrac{dy}{y-z}
\end{align*}
\item[$\circ$] a basis of $\pi_{2*}(F)$ on $U_0$: $e_1, \ldots, e_6, ye_1$
\item[$\circ$] the image of the basis of $\pi_{2*}(F)$ 
\begin{align*}
	\nabla(e_1)=&-\eta_1+\eta_4+\gamma \eta_{13}\\
	\nabla(e_2)=&\nu^{(0)}_2\eta_1+\eta_5+\eta_9\\
	\nabla(e_3)=&\eta_2+(\tfrac{2}{3}\nu_0+\tfrac{1}{2}\nu_1+\nu^{(1)}_1)(\tfrac{2}{3}\nu_0+\nu_1-\nu^{(1)}_1)\eta_4-(\tfrac{2}{3}\nu_0+\tfrac{1}{2}\nu_1+\nu^{(1)}_1)\eta_5+\eta_{10}\\
	\nabla(e_4)=&-(\tfrac{2}{3}\nu_0+\tfrac{1}{2}\nu_1-\nu^{(0)}_2)\eta_2+\eta_6+\eta_{11}\\
	\nabla(e_5)=&\eta_3-(p+\tfrac{1}{3}\nu_0+\nu^{(1)}_1)\eta_6-\eta_7+\eta_{12}\\
	\nabla(e_6)=&-\tfrac{p+\nu^{(0)}_1}{q}\eta_3+\tfrac{p-\nu^{(1)}_1}{q-1}\eta_7\\
	&+(\nu^{(2)}_1-\nu_2-1)\left(\nu^{(2)}_1\mu_1+(\nu^{(2)}_1)^2(a_6-a_7+(\nu^{(2)}_1)^2)+\mu_0(a_6+(\nu^{(2)}_1)^2)\right)\eta_{8}\\
	&-\left(\nu^{(2)}_1\mu_1+(\nu^{(2)}_1)^2(a_6-a_7+(\nu^{(2)}_1)^2)+\mu_0(a_6+(\nu^{(2)}_1)^2)\right)\eta_{9}\\
	&+(\nu^{(2)}_1\mu_0+\mu_1+\nu^{(2)}_1(-a_7+(\nu^{(2)}_1)^2)\eta_{10}\\
	&-(\mu_0+(\nu^{(2)}_1)^2)\eta_{11}+\nu^{(2)}_1\eta_{12}\\
	\nabla(ye_1)=&\eta_4+\eta_8+\gamma z \eta_{13}
\end{align*}
\item[$\circ$] a basis of $E'$ on $U_0$: $[\eta_8], \ldots, [\eta_{13}]$
\[
\nabla'([\eta_{13}], [\eta_8], \ldots, [\eta_{12}])=
([\eta_{13}], [\eta_8], \ldots, [\eta_{12}])
\begin{pmatrix}
	*&&\cdots &&&*\\1&\ddots&&&&\\0&z&&&&\\0&0&z-1&&&\vdots\\0&0&0&z&\ddots&\\0&0&0&0&*&*\\
\end{pmatrix}\frac{dz}{z(z-1)}
\]
\end{itemize}


\begin{thebibliography}{9}
		\bibitem{Bo1}P. Boalch, 
		Irregular connections and Kac-Moody root systems,
		2008, arXiv:0806.1050. 
		\bibitem{Bo2}P. Boalch, Quivers and difference Painlev\'e equations, Groups and symmetries, 25--51, CRM Proc. Lecture Notes, 47, Amer. Math. Soc., Providence, RI, 2009.
		\bibitem{Bo3}P. Boalch, Simply-laced isomonodromy systems, Publ. Math. I.H.E.S. 116 (2012), no. 1, 1--68,
		\bibitem{Cr} W. Crawley-Boevey, On matrices in prescribed conjugacy classes with no common
		invariant subspace and sum zero, Duke Math. J. 118, no. 2 (2003), 339--352.
		\bibitem{DL} K. Diarra, and F. Loray, Normal forms for rank two linear irregular differential equations and moduli spaces, Period. Math. Hungar., Periodica Mathematica Hungarica.
		Journal of the J\'anos Bolyai Mathematical Society, 84, 2022, 2, 303--320.
		\bibitem{DST}A. Dzhamay, H. Sakai, and T. Takenawa, Discrete Schlesinger transformations, their Hamiltonian formulation, and Difference Painlev\'e equations, arXiv:1302.2972v2 [math-ph], 2013, pp. 1--29.
		\bibitem{DT}A. Dzhamay and T. Takenawa, Geometric analysis of reductions from Schlesinger transformations to difference Painlev\'e equations, Algebraic and analytic aspects of integrable systems and Painlev\'e equations, Contemp. Math., 651, 87--124, Amer. Math. Soc., Providence, RI,2015.
		\bibitem{Hi} K. Hiroe, Linear differential equations on the Riemann sphere and representations of
		quivers, Duke Math. J. 166 (2017), no. 5, 855--935.
		\bibitem{HF}Y. Haraoka and G.~V. Filipuk, 
		Middle convolution and deformation for Fuchsian systems,
		J. Lond. Math. Soc. (2) {\bf 76} (2007), no.~2, 438--450.
		\bibitem{HY} K. Hiroe, D. Yamakawa, Moduli spaces of meromorphic connection and quiver varieties,
		Adv. Math., 266 (2014), 120--151.
		\bibitem{IIS1} M. Inaba and K. Iwasaki and M. -H. Saito, Moduli of stable parabolic connections, {R}iemann-{H}ilbert
		correspondence and geometry of {P}ainlev\'{e} equation of type
		{VI}. {I},  Publ. Res. Inst. Math. Sci. 42 (2006), no. 4, 987--1089. 
		\bibitem{IIS2}  M. Inaba and K. Iwasaki and M. -H. Saito, Moduli of stable parabolic connections, {R}iemann-{H}ilbert correspondence and geometry of {P}ainlev\'{e} equation of type {VI}. {II},
		\bibitem{In} M. Inaba, Moduli of parabolic connections on curves and the
		{R}iemann-{H}ilbert correspondence, J. Algebraic Geom. 22 (2013), no. 3, 407--480. 
		\bibitem{IS1} M. Inaba, M.-H. Saito, Moduli of unramified irregular singular parabolic connections on a smooth projective curve. Kyoto J. Math. 53 (2013) 433--482.
		\bibitem{IS2} M. Inaba and M. -H. Saito, Moduli of regular singular parabolic connections with given spectral type on smooth projective curves, J. Math. Soc. Japan 70, No. 3 (2018), 879--894.
		\bibitem{KS} A. Komyo and M. -H. Saito, Explicit description of jumping phenomenon moduli
		spaces of parabolic connections and Hilbert schemes of points on surfaces. Kyoto J.
		Math. 59(3), 515--552 (2019)
		\bibitem{KLSS} A. Komyo, F. Loray, M. -H. Saito, and S. Szab\'o, canonical coordinates for moduli spaces of rank two irregular connections on curves, Preprint arXiv:2309.05012 (2023).
		\bibitem{KNY}K. Kajiwara, M. Noumi, and Y. Yamada. Geometric aspects of Painlev\'e equations. J. Phys. A, 50(7):073001, 164, 2017.
		\bibitem{Ma1} T. Matsumoto, Moduli space of rank three logarithmic connections on the projective line with three poles, Preprint arXiv:2311.10071 (2023).
		\bibitem{Ma2} T. Matsumoto, Realization of symmetry of $A^{(1)*}_2$-surfaces as transformations of logarithmic connections, Preprint arXiv:2508.07920 (2025).
		\bibitem{Sa1}H. Sakai, Rational surfaces associated with affine root systems and geometry of the Painlev\'e equations, Comm. Math. Phys., 220(1):165--229, 2001.
		\bibitem{Sim}C.~T. Simpson, Katz's middle convolution algorithm,
		Pure Appl. Math. Q. {\bf 5} (2009), no.~2, Special Issue: In honor of Friedrich Hirzebruch. Part 1, 781--852.
		\bibitem{SS}M. -H. Saito and S. Szab\'{o}, On apparent singularities and canonical coordinates for moduli spaces of connections, in preparation.
		\bibitem{Ya}D. Yamakawa. 
		Fourier-Laplace transform and isomonodromic deformations,
		Funkcial. Ekvac. 59 (2016), 315--349.
	\end{thebibliography}
\end{document}